\title{\textbf{On the finite solvable PNC-groups}\thanks{\footnotesize  Corresponding
  author. \newline\hspace*{0.45cm} \scriptsize\emph{E-mail addresses:}
      zsmcau@cau.edu.cn\,(S. Zhang); zhencai688@sina.com\,(Z. Shen).}}
\author{Shengmin Zhang, Zhencai Shen\\
\quad
\\
{\small College of Science,
China Agricultural University,
Beijing 100083, China}}
\date{}
\newtheorem{theorem}{Theorem}[section]
\newtheorem{proposition}[theorem]{Proposition}
\newtheorem{lemma}[theorem]{Lemma}
\theoremstyle{definition}
\newtheorem{definition}[theorem]{Definition}
\newtheorem{question}[theorem]{Question}
\newtheorem{remark}[theorem]{Remark}
\newtheorem{observation}[theorem]{Observation}
\let\expandafter\oldproof\csname\string\proof\endcsname
\let\oldendproof\endproof
\renewenvironment{proof}[1][\proofname]{%
  \oldproof[\bfseries\scshape #1]%
}{\oldendproof}
\def\trianglelefteqslant{\ThisStyle{\mathrel{%
  \stackinset{r}{.75pt+.15\LMpt}{t}{.1\LMpt}{\rule{.3pt}{1.1\LMex+.2ex}}{\SavedStyle\leqslant}%
}}}
\renewcommand{\unlhd}{\trianglelefteqslant}
\renewcommand{\leq}{\leqslant}
\renewcommand{\geq}{\geqslant}
\begin{document}
\maketitle
\begin{abstract}
A subgroup $H$ of a finite group $G$ is said to be an NC-subgroup of $G$,   if $ H^G N_G (H) =G$, where $H^G$ denotes the normal closure of $H$ in $G$. A finite group $G$ is called a PNC-group, if any subgroup of $G$ is an NC-subgroup of $G$, and $G$ is said to be an ON-group, if for any subgroup $H$ of $G$, either $N_G (H)=H,\,H^G=G$, or $H \unlhd G$. In this paper, we firstly investigate the basic properties of solvable PNC-groups, and then give several sufficient conditions for $G$ to be a solvable PNC-group. In the end of this paper, we discover some characterizations for minimal non-ON-groups, ON-groups, non-abelian simple groups whose second maximal subgroups are solvable PNC-groups and   groups whose proper (maximal) subgroups are solvable PNC-groups.
\end{abstract}
\section{Introduction}
All groups considered in this paper will be finite. Let $G$ be a finite group. Then $G$ is said to be a PN-group, if all minimal subgroups of $G$ are normal in $G$. This concept was first investigated by Buckley in {{\cite{BU}}}. Inspired by the definition of PN-groups, In {{\cite{SL}}} , S. Li introduced a generalization of PN-groups, which is called PE-groups. In fact, the normality in the definition of PN-groups could be generalised, hence we have the following generalization:
\begin{definition}
A subgroup $H$ of a group $G$ is called an NE-subgroup of $G$, if $N_G (H) \cap H^G  = H$, where $H^G$ denotes the normal closure of $H$ in $G$. 
\end{definition}
As a natural way, S. Li gave a new definition in {{\cite{SL}}} as follows:
\begin{definition}
A group $G$ is said to be a PE-group, if any minimal subgroup of $G$ is an NE-subgroup of $G$.
\end{definition}
In that paper, S. Li classified the minimal non-PE groups and restricted the Fitting height by 3. Inspired by S. Li and his works, Y. Li studied the further relationship between finite solvable $\mathscr{T}$-groups and NE-subgroups, and obtained several new characterizations for supersolvablility and nilpotency in {{\cite{YL}}}. Recall that $G$ is said to be a $\mathscr{T}$-group, if any subnormal subgroup of $G$ is normal in $G$. A group $G$ is a $\mathscr{C}_p$-group, if every subgroup of a Sylow $p$-subgroup $P$ of $G$ is normal in $N_G (P)$, where $p$ is a prime divisor of $|G|$. A group is called a $\overline{\mathscr{T}}$-group, if all of its subgroups are $\mathscr{T}$-groups. A subgroup
$H$ of $G$ is said to be pronormal in $G$, if for each $g \in G$, $H$ and $H^g$ are already conjugate in $\langle H,H^g \rangle$. A subgroup $H$ of $G$ is called normally embedded in $G$, if every Sylow $p$-subgroup of $H$ is a Sylow $p$-subgroup of some normal subgroup of $G$,
for all $p \in \pi (G)$. Finally, as introduced in {{\cite{MA}}}, we call a subgroup $H$ of $G$ an $\mathscr{H}$-subgroup of $G$ if $N_G (H) \cap H^g \leq H$, for any $g \in G$. Combining the characterization of NE-subgroups, we conclude the following theorem for the characterizations of solvable $\mathscr{T}$-groups.
\begin{theorem}[{{\cite[Theorem 1.1]{YL}}} and {{\cite[Theorem 3.2]{YL}}}]\label{NE}
Let $G$ be a finite group, then the following are equivalent:
\begin{itemize}
\item[(1)] $G$ is a $\mathcal{T}$-group.
\item[(2)] $G = L \rtimes M$, where $L = [G,G^{'}]$ is an abelian normal Hall subgroup of $G$ and $M$ is a Dedekind group. 
\item[(3)] $G$ is a supersolvable $\mathcal{T}$-group.
\item[(4)] $G$ is a $\overline{T}$-group.
\item[(5)] $G$ satisfies $\mathscr{C}_{p}$ for all $p \in \pi (G)$.
\item[(6)] For all $p \in \pi (G)$, the $p$-subgroups of $G$ are pronormal in $G$.
\item[(7)] Every subgroup of $G$ is an $\mathscr{H}$-subgroup of $G$.
\item[(8)] For all $p \in \pi (G)$, the $p$-subgroups of $G$ are $\mathscr{H}$-subgroup of $G$.
\item[(9)] All subgroups of $G$ are normally embedded in $G$. 
\item[(10)] Every subgroup of $G$ is an NE-subgroup of $G$.
\item[(11)] For all $p \in \pi (G)$, the $p$-subgroups of $G$ are   NE-subgroups of $G$.
\end{itemize}
\end{theorem}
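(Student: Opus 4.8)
The plan is to designate the Gaschütz-type structure statement in item $(2)$ as the pivot and to prove that every other item is equivalent to it, splitting the work into a \emph{structural} cluster $\{(1),(2),(3),(4)\}$ and an \emph{embedding} cluster $\{(5),(6),(7),(8),(9),(10),(11)\}$. A preliminary observation is that none of $(5)$–$(11)$ can hold in a non-solvable group: in $A_5$ the cyclic subgroup $H=\langle(12)(34)\rangle$ has $N_{A_5}(H)\cong V_4$, and since all involutions are conjugate one may conjugate $H$ to $\langle(13)(24)\rangle\le V_4=N_{A_5}(H)$, so that $N_{A_5}(H)\cap H^{g}=H^{g}\not\le H$, violating the $\mathscr{H}$-, NE- and pronormality conditions at once. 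Hence each of $(5)$–$(11)$ forces solvability, and there is no loss in reading $(1)$ as ``$G$ is a solvable $\mathscr{T}$-group,'' consistent with the framing of this section; I would record this reduction first so that all later arguments may assume $G$ solvable.

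For the structural cluster I would prove $(1)\Leftrightarrow(2)$ by Gaschütz's theorem: the reverse implication is a direct check that a subnormal subgroup of $L\rtimes M$ is normal, using that $L=[G,G']$ is an abelian normal Hall subgroup and $M$ is Dedekind; the forward implication is the deep half, where one shows the nilpotent residual is an abelian Hall subgroup and that the complement inherits the Dedekind property because its subgroups are subnormal, hence normal. Then $(2)\Rightarrow(3)$ follows since an abelian normal Hall kernel with a Dedekind complement is supersolvable, $(3)\Rightarrow(1)$ is trivial, and $(1)\Leftrightarrow(4)$ is Robinson's result that the class of solvable $\mathscr{T}$-groups is closed under taking subgroups, so for these groups ``$\mathscr{T}$'' and ``$\overline{\mathscr{T}}$'' coincide.

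For the embedding cluster I would first harvest the cheap implications. Because $H^{g}\le H^{G}$, the NE-equality $N_{G}(H)\cap H^{G}=H$ forces $N_{G}(H)\cap H^{g}\le H$, so every NE-subgroup is an $\mathscr{H}$-subgroup; this yields $(10)\Rightarrow(7)$ and $(11)\Rightarrow(8)$ at once, while $(7)\Rightarrow(8)$ and $(10)\Rightarrow(11)$ are mere restrictions to $p$-subgroups. The bridge to the structural side is the family of lemmas that a subnormal subgroup which is pronormal, or an $\mathscr{H}$-subgroup, or an NE-subgroup, or normally embedded, is automatically normal; the $\mathscr{H}$-case, for instance, reduces by induction on the subnormal defect to a configuration $H\unlhd M\unlhd G$, where $H^{g}\le M\le N_{G}(H)$ gives $N_{G}(H)\cap H^{g}=H^{g}\le H$ and hence $H^{g}=H$. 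Thus each embedding hypothesis makes the relevant subnormal ($p$-)subgroups normal, and via the standard fact that transitivity of normality in a solvable group can be tested $p$-locally (the engine behind $(5)$ and the pronormality condition $(6)$) one recovers the full $\mathscr{T}$-property of $(1)$; conversely, starting from the explicit decomposition in $(2)$, I would verify directly that $p$-subgroups are pronormal, normally embedded, and both NE- and $\mathscr{H}$-subgroups, closing every spoke.

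I expect the principal obstacle to be this last propagation: upgrading the ostensibly weak $p$-local hypotheses $(6)$, $(8)$ and $(11)$, which constrain only $p$-subgroups, to the global structural conclusion $(2)$, together with the hard forward half of Gaschütz's theorem. Both points turn on controlling the interaction between normalisers and normal closures of $p$-subgroups across distinct primes, and it is precisely here that solvability — guaranteeing Hall subgroups and a workable chief-series analysis — does the essential work; organising the transfer of information prime-by-prime, rather than attempting a single global cycle through all eleven conditions, is what I anticipate will keep the argument manageable.
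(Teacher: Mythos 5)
The paper does not actually prove this theorem: it is imported verbatim from \cite[Theorems 1.1 and 3.2]{YL}, so the honest comparison is with the source proofs (Gasch\"utz and Robinson for the structural items, \cite{MA} and \cite{YL} for the embedding conditions), whose overall architecture your sketch does mirror. But as a proof your proposal has concrete gaps. The central one: you assert that ``none of $(5)$--$(11)$ can hold in a non-solvable group'' and justify it solely by a computation in $A_5$. That computation shows the conditions fail \emph{in $A_5$}; it does not show they fail in every non-solvable group, and your entire reduction (reading $(1)$ as ``solvable $\mathscr{T}$-group'' and then assuming solvability throughout) rests on that unproved claim. The actual argument in the literature is inductive, not exemplary: the conditions in $(7)$/$(10)$ and their $p$-local variants are inherited by intermediate subgroups, minimal subgroups of order $p$ (and $2$ and $4$ when $p=2$) that are $\mathscr{H}$- or NE-subgroups force $p$-nilpotency for the smallest prime via Burnside/It\^o-type arguments, and iterating produces a Sylow tower, hence solvability. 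No single example can substitute for this step, and your plan contains no mechanism to replace it.

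A second, sharper defect: $(2)\Rightarrow(1)$ is not the ``direct check'' you claim, because item $(2)$ as printed is elliptical --- it omits the clause, present in Gasch\"utz's theorem and in the source statement, that $M$ acts on $L$ by power automorphisms (equivalently, every subgroup of $L$ is normal in $G$). Without that clause the implication is false: take $G=(C_5\times C_5)\rtimes C_3$ with $C_3$ acting irreducibly; then $L=[G,G']=C_5\times C_5$ is an abelian normal Hall subgroup and $M=C_3$ is Dedekind, yet any $C_5\le L$ is subnormal but not normal, so $G$ is not a $\mathscr{T}$-group (and $(3)$ fails as well, since $G$ is not supersolvable). You correctly patched the analogous defect in $(1)$, but took $(2)$ at face value, so the ``easy'' direction of your pivot equivalence breaks as written. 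Finally, the steps you yourself identify as the principal obstacle --- upgrading the $p$-local hypotheses $(6)$, $(8)$, $(11)$ to the structure in $(2)$, and the forward half of Gasch\"utz --- constitute essentially all of the theorem's content and are only named, never carried out. As it stands the proposal is a reasonable map of how the cited proofs are organized, but not a proof.
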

Similarly, inspired by the characterizations and the concept of NE-subgroups above, we introduce the concept of NC-subgroups and PNC-subgroups as follows.
\begin{definition}
A subgroup $H$ is said to be an NC-subgroup of $G$, if $ H^G N_G (H) =G$, where $H^G$ denotes the normal closure of $H$ in $G$.
\end{definition}
As a natural way of generalization, we introduce the following definition.
\begin{definition}
A group $G$ is called a PNC-group, if any subgroup of $G$ is an NC-subgroup of $G$. 
\end{definition}
A natural question is: can we give solvable $\mathscr{T}$-groups a new characterization which is related to NC-subgroups? Actually, a solvable $\mathscr{T}$-group may not necessarily be a solvable PNC-group. Thus PNC-groups are stronger than PE-groups and PN-groups, which showcases its potential to do some further research. Recall that a group $G$ is said to be an NSN-group, if
every subgroup $H$ of $G$ is either normal in $G$ or self-normalizing, that is, either $N_G (H)=H$ or $H \unlhd G$. In {{\cite{NSN}}}, Z. Han {\it et al.} classified the minimal non-NSN groups. Now we combine the property of PNC-groups and NSN-groups, and so give the concept of ON-groups as follows: 
\begin{definition}
A group $G$ is said to be an ON-group, if for any subgroup $H$ of $G$, either $N_G (H)=H,\,H^G=G$, or $H \unlhd G$.
\end{definition}
If a group $G$ is an ON-group, one can easily find that $G$ is a PNC-group, and $G$ is an NSN-group as well. Hence one of our aim in this paper is to classify the ON-groups.

In section \ref{10002}, we investigate the fundamental properties of solvable PNC-groups, and give some criteria for supersolvability, $p$-nilpotency related to PNC-groups. Restriction for the nilpotence class of the generalised Fitting subgroup of solvable PNC-groups are given as well. In section \ref{10003}, we study the inheritance of solvable PNC-groups, and several examples are given to support our conclusions. In section \ref{10004}, we show some sufficient conditions for a group $G$ to be a solvable PNC-groups. Some equivalent conditions for dihedral groups and generalised Quaternion groups are also given in this paper. In section \ref{10005}, we give some characterizations for non-PE-groups whose proper subgroups are solvable PNC-groups, non-PE-groups whose proper subgroups are solvable PNC-groups, non-PE-groups whose proper subgroups are ON-groups, and classify the ON-groups. Characterizations for groups whose maximal subgroups are solvable PNC-groups are also given in this paper. In particular, we classify the non-abelian simple groups whose second maximal subgroups are solvable PNC-groups. 
\section{Basic properties for solvable PNC-groups}\label{10002}
It is clear that simple group and abelian group are PNC-groups. A natural question is, if $G$ is nilpotent, supersolvable, etc, is $G$ still a PNC-group? What will $G$ be like if certain subgroups of $G$ is a solvable PNC-group? Is a solvable PNC-group $p$-nilpotent for some $p\,|\,|G|$? In this section, we will answer the questions above and give some specific criteria for solvable PNC-groups.
\begin{lemma}\label{T}
Assume that $G$ is a PNC-group, then $G$ is a $\mathcal{T}$-group.
\end{lemma}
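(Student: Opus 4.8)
The plan is to reduce the statement to the transitivity of normality. Recall the standard fact that a finite group is a $\mathcal{T}$-group if and only if normality is transitive in it, i.e.\ whenever $H \unlhd K \unlhd G$ one has $H \unlhd G$. First I would justify this reduction: given a subnormal subgroup $H$ with a series $H = H_0 \unlhd H_1 \unlhd \cdots \unlhd H_n = G$, an induction on the defect $n$ collapses the series one step at a time using transitivity (the case $n \le 1$ being trivial, and the inductive step applying transitivity to $H_0 \unlhd H_1 \unlhd G$ after first lifting $H_1$ to a normal subgroup of $G$). Conversely, transitivity is precisely the defect-$2$ instance of the $\mathcal{T}$-property. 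Hence it suffices to prove that a PNC-group has transitive normality.

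So suppose $H \unlhd K \unlhd G$ in a PNC-group $G$, and aim to conclude $H \unlhd G$. The heart of the argument is a short containment computation combining the two chain relations. Since $K \unlhd G$, every conjugate satisfies $H^g \le K^g = K$, so the normal closure obeys $H^G \le K$. Since $H \unlhd K$, every element of $K$ normalizes $H$, so $K \le N_G(H)$. Chaining these gives $H^G \le K \le N_G(H)$. Because $H^G \unlhd G$, the product $H^G N_G(H)$ is a genuine subgroup, and the containment just obtained forces it to collapse: $H^G N_G(H) = N_G(H)$.

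Finally I would invoke the PNC hypothesis applied to $H$ itself: by definition $H$ is an NC-subgroup of $G$, so $H^G N_G(H) = G$. Comparing this with the previous identity yields $N_G(H) = G$, that is, $H \unlhd G$. This establishes transitivity of normality, and hence the claim via the reduction above.

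As for difficulty, I expect no genuine obstacle here: once the problem is rephrased through transitivity of normality, the core is a two-line computation using only the definitions of $H^G$ and $N_G(H)$ together with the PNC condition on the single subgroup $H$. The only point deserving a little care is the reduction itself, namely confirming the equivalence of the $\mathcal{T}$-property with transitivity of normality and that the induction on subnormal defect is legitimate; but this is entirely standard.
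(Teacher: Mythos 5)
Your proof is correct and is essentially the paper's own argument: the paper performs the identical computation $H_{i-1}^G \le H_i \le N_G(H_{i-1})$, hence $G = H_{i-1}^G N_G(H_{i-1}) = N_G(H_{i-1})$, by descending induction along a fixed subnormal series, which is just your defect-$2$ transitivity step iterated. Repackaging it as ``prove transitivity, then induct on defect'' changes the presentation but not the mathematics.
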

\begin{proof}
Let $H$ be a subgroup of $G$ such that $H \unlhd \unlhd G$. Then there exists a subnormal series 
$$H = H_0 \unlhd H_1 \unlhd \cdots \unlhd H_n = G.$$
It is clear that $H_{n-1} \unlhd G$, and $N_G (H_{i-1}) \geq H_i$, $i=1,2,\cdots,n$. Assume that $H_i$ is normal in $G$, then $H_{i-1} ^G \leq H_i \leq N_G (H_{i-1})$. It follows that 
$$G = N_G (H_{i-1}) H_{i-1} ^G = N_G (H_{i-1}).$$
Thus $H_{i-1} \unlhd G$. Therefore we conclude that $H_i \unlhd G$, $i=0,1,\cdots,n$. In particular, $H = H_0 \unlhd G$. By the choice of $H$, $G$ is a $\mathcal{T}$ group.
\end{proof}
\begin{proposition}
Let $G$ be a nilpotent group. Then $G$ is a PNC-group if and only if $G$ is a Dedekind group.
\end{proposition}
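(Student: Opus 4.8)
The plan is to prove the two implications separately, with the reverse direction being essentially immediate and the forward direction resting on Lemma~\ref{T} together with a standard property of nilpotent groups. The role of the nilpotency hypothesis is concentrated entirely in the forward direction.

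First I would dispose of the ``if'' direction. Suppose $G$ is a Dedekind group, so that every subgroup $H$ of $G$ is normal. Then $H^G = H$ and $N_G(H) = G$, whence trivially $H^G N_G(H) = G$, so $H$ is an NC-subgroup. Since $H$ was arbitrary, $G$ is a PNC-group. Note that this direction uses neither finiteness beyond the standing assumption nor nilpotency; the content of the proposition lies in the converse.

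For the ``only if'' direction, I would argue as follows. Assume $G$ is nilpotent and a PNC-group. Every nilpotent group satisfies the normalizer condition, so each of its subgroups is subnormal; that is, for an arbitrary subgroup $H$ there is a subnormal series $H = H_0 \unlhd H_1 \unlhd \cdots \unlhd H_n = G$. By Lemma~\ref{T}, $G$ is a $\mathcal{T}$-group, so every subnormal subgroup of $G$ is normal; in particular $H \unlhd G$. Since $H$ was arbitrary, every subgroup of $G$ is normal and $G$ is a Dedekind group. Alternatively, one may bypass the explicit invocation of the $\mathcal{T}$-property and simply rerun the inductive ascent from the proof of Lemma~\ref{T} along the above subnormal series: if $H_i \unlhd G$, then $H_{i-1}^G \le H_i \le N_G(H_{i-1})$ forces $G = N_G(H_{i-1})H_{i-1}^G = N_G(H_{i-1})$, so $H_{i-1} \unlhd G$, and descending from $H_n = G$ to $H_0 = H$ yields $H \unlhd G$.

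There is no genuine obstacle here: the only non-formal ingredient is the classical fact that in a nilpotent group every subgroup is subnormal, which is exactly what converts the already-established $\mathcal{T}$-property of PNC-groups into the much stronger Dedekind condition. The main point worth emphasizing is that the two hypotheses cooperate precisely through this mechanism, PNC forcing subnormal subgroups to be normal and nilpotency forcing all subgroups to be subnormal.
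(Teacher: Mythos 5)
Your proposal is correct and follows essentially the same route as the paper: the forward direction combines the fact that every subgroup of a nilpotent group is subnormal with Lemma~\ref{T} (PNC implies $\mathcal{T}$-group), and the reverse direction observes that normality of $H$ makes $H^G N_G(H) = G$ trivial. Your explicit spelling-out of $H^G = H$ and $N_G(H) = G$ in the ``if'' direction, and the optional inline rerun of the inductive ascent from Lemma~\ref{T}, are just slightly more detailed renderings of the paper's argument, not a different approach.
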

\begin{proof}
Assume that $G$ is a PNC-group. Let $H$ be a subgroup of $G$. Since $G$ is nilpotent, $H \unlhd \unlhd G$. By Lemma \ref{T}, $H \unlhd G$. By the choice of $H$, we indicate that $G$ is a Dedkind group.

Assume that $G$ is a Dedekind group. Let $H$ be a subgroup of $G$. Since $H \unlhd G$, by the choice of $H$, $G$ is a PNC-group.
\end{proof}
\begin{proposition}\label{supersolvable}
Assume that a solvable group $G$ is a PNC-group, then $G$ is supersolvable.
\begin{proof}
Since $G$ is solvable, there exists a composition series
$$1=A_1 \unlhd A_2 \unlhd \cdots \unlhd A_{n-1} \unlhd A_{n} =G.$$
where each composition factor $A_i/A_{i-1}$ is cyclic of prime order. One can easily find that $A_{i} \unlhd\unlhd G$, $i=1,2,\cdots,n$. By Lemma \ref{T} we conclude that $A_i \unlhd G$, $i=1,2,\cdots,n$. Therefore $G$ is supersolvable. 
\end{proof}
\end{proposition}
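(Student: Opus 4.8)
The plan is to reduce everything to Lemma~\ref{T}, which guarantees that a PNC-group is a $\mathcal{T}$-group; in such a group every subnormal subgroup is automatically normal, and this is the only extra leverage I need beyond solvability. Supersolvability amounts to exhibiting a normal series of $G$ (each term normal in the whole group) whose successive quotients are cyclic. Solvability already supplies a series with cyclic prime-order quotients at the subnormal level, so the strategy is simply to upgrade that series from subnormal to normal.

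Concretely, I would first use the solvability of $G$ to fix a composition series $1 = A_1 \unlhd A_2 \unlhd \cdots \unlhd A_n = G$. Since $G$ is solvable, each composition factor $A_i/A_{i-1}$ is both simple and abelian, hence cyclic of prime order. Each $A_i$ is a term of this series and is therefore subnormal in $G$. Applying Lemma~\ref{T} term by term, the $\mathcal{T}$-property forces $A_i \unlhd G$ for every $i$, so the composition series is in fact a normal series of $G$ with cyclic factors. By definition this makes $G$ supersolvable.

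I do not expect a genuine obstacle here: all of the real content is packaged into Lemma~\ref{T}, and the remaining ingredients—that the composition factors of a solvable group are cyclic of prime order, and that every term of a composition series is subnormal—are standard. The only point that deserves a moment's care is ensuring that the normality of the $A_i$ is asserted in $G$ itself, not merely in the next term of the series, which is precisely what the $\mathcal{T}$-property delivers.
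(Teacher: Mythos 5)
Your proof is correct and follows exactly the same route as the paper: fix a composition series of the solvable group $G$, observe its factors are cyclic of prime order and its terms subnormal, and invoke Lemma~\ref{T} (PNC implies $\mathcal{T}$-group) to promote each term to a normal subgroup of $G$. Nothing to add.
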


\begin{remark}
We predicate that a supersolvable group $G$ is not necessarily a PNC-group. For example, let $G = C_2 ^2 \rtimes C_4$. It is clear that $G$ is supersolvable, and there exists a subgroup $C_4$ such that 
$$N_G (C_4)=C_2 \times C_4,\,C_4 ^G \leq C_2 \times C_4.$$
Thus the converse of proposition \ref{supersolvable} is not true.
\end{remark}
\begin{observation}
Let $G$ be a finite group and $K \leq G$. Then $K$ is an NC-subgroup of $G$ if and only if $[K,G] N_G (K) = G$.
\end{observation}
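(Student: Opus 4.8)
The plan is to reduce the equality $K^G N_G(K) = G$ to the equality $[K,G]N_G(K) = G$ by rewriting the normal closure in commutator form. The cornerstone is the standard identity $K^G = K[K,G]$, together with the fact that the commutator subgroup $[K,G]$ is normal in $G$. Both of these I would establish first, as they drive the whole argument.

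First I would verify that $[K,G] \unlhd G$. This follows from the commutator identity $[k,g]^h = [k^h, g^h]$ (up to the usual rewriting $[k^h,g^h] = [k,g]^{h}$ being again a product of commutators of elements of $K$ with elements of $G$), so conjugation by any $h \in G$ maps generators of $[K,G]$ into $[K,G]$. Next I would prove $K^G = K[K,G]$. For the inclusion $K[K,G] \le K^G$, note that each $[k,g] = k^{-1}k^g \in K^G$ since $K^G$ is normal and contains $K$; and $K \le K^G$ trivially. For the reverse inclusion, it suffices to check that $K[K,G]$ is a normal subgroup of $G$ containing $K$: it is a subgroup because $[K,G] \unlhd G$ makes $K[K,G]$ the product of a subgroup with a normal subgroup, and it is normal because for any $g \in G$ one has $k^g = k[k,g] \in K[K,G]$, so $K^g \le K[K,G]$, whence $K^G \le K[K,G]$.

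With $K^G = K[K,G]$ in hand, the main computation is a one-line simplification of the set product. Since $[K,G] \unlhd G$, it permutes with every subset, giving $K[K,G] = [K,G]K$; and since $K \le N_G(K)$, we have $K\,N_G(K) = N_G(K)$. Therefore
$$K^G N_G(K) = K[K,G]N_G(K) = [K,G]\,K\,N_G(K) = [K,G]N_G(K).$$
The two products are literally equal as subsets of $G$, so one equals $G$ exactly when the other does, which is precisely the claimed equivalence.

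I do not expect a genuine obstacle here: the argument is entirely formal once the identity $K^G = K[K,G]$ is available. The only point requiring care is the manipulation of products of subgroups, where I must ensure that every product appearing is actually a subgroup (so that equalities of sets are justified); this is guaranteed throughout by the normality of $[K,G]$ in $G$, which is why I would prove that normality at the very start.
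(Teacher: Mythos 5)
Your proof is correct and takes essentially the same route as the paper's: both rest on $[K,G] \unlhd G$ and the identity $K^G = K[K,G]$, and then absorb $K$ into $N_G(K)$ (since $K \le N_G(K)$) to pass between $K^G N_G(K)$ and $[K,G]N_G(K)$; you merely spell out the details the paper takes for granted. One small repair: in your normality check, $[k,g]^h = [k^h,g^h]$ has $k^h \notin K$ in general, so you should instead use the standard identity $[k,g]^h = [k,h]^{-1}[k,gh] \in [K,G]$, which closes that step without further change.
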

\begin{proof}
Clearly $[K,G] \unlhd G$. Therefore we conclude that $K[K,G]=K^G$. It follows that 
$$K^G N_G (K) = G \Leftrightarrow ([K,G]K)N_G (K) = G \Leftrightarrow [K,G] N_G (K) = G. \qedhere$$
\end{proof}
\begin{proposition}\label{NE}
Assume that $G$ is a solvable PNC-group, then the following holds:
\begin{itemize}
\item[(1)] $G$ is a $\mathcal{T}$-group.
\item[(2)] $G = L \rtimes M$, where $L = [G,G^{'}]$ is an abelian normal Hall subgroup of $G$ and $M$ is a Dedekind group. 
\item[(3)] $G$ is a supersolvable $\mathcal{T}$-group.
\item[(4)] $G$ is a $\overline{T}$-group.
\item[(5)] $G$ satisfies $\mathscr{C}_{p}$ for all $p \in \pi (G)$.
\item[(6)] For all $p \in \pi (G)$, the $p$-subgroups of $G$ are pronormal in $G$.
\item[(7)] Every subgroup of $G$ is an $\mathscr{H}$-subgroup of $G$.
\item[(8)] For all $p \in \pi (G)$, the $p$-subgroups of $G$ are $\mathscr{H}$-subgroup of $G$.
\item[(9)] All subgroups of $G$ are normally embedded in $G$. 
\item[(10)] Every subgroup of $G$ is an NE-subgroup of $G$.
\item[(11)] For all $p \in \pi (G)$, the $p$-subgroups of $G$ are   NE-subgroups of $G$.
\end{itemize}
\end{proposition}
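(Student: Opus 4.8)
The plan is to recognize that items (1)--(11) in the statement are verbatim the eleven equivalent conditions of the equivalence theorem quoted above (\cite{YL}); consequently it suffices to certify that $G$ meets the hypotheses of that theorem and satisfies even one of the eleven conditions, after which the biconditional chain (1) $\Leftrightarrow \cdots \Leftrightarrow$ (11) delivers all of them simultaneously.

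First I would apply Lemma \ref{T} directly: a PNC-group is a $\mathcal{T}$-group, which is exactly item (1). Next I would observe that $G$ is solvable by hypothesis --- and in fact supersolvable by Proposition \ref{supersolvable} --- so $G$ lies in the class of solvable $\mathcal{T}$-groups for which \cite{YL} establishes the equivalence of items (1)--(11). With item (1) in hand (and, redundantly, item (3) from Proposition \ref{supersolvable}), the equivalence then forces every remaining statement: the Hall-subgroup decomposition (2), the $\mathscr{C}_p$-property (5), pronormality of $p$-subgroups (6), the $\mathscr{H}$-subgroup conditions (7)--(8), normal embedding (9), and the NE-subgroup conditions (10)--(11). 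No individual item requires a separate argument.

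I do not expect a substantial obstacle, since the argument is a citation bracketed by two one-line reductions. The only point needing care is logical alignment rather than computation: I must confirm that Lemma \ref{T} supplies precisely the condition that serves as an entry point into the quoted equivalence, and that solvability is what legitimately invokes the theorem in its form for solvable $\mathcal{T}$-groups. Once that alignment is verified, the conclusion of the proposition is immediate.
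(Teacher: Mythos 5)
Your proposal is correct and matches the paper's own proof: the paper likewise applies Lemma \ref{T} to conclude that $G$ is a solvable $\mathcal{T}$-group and then cites the equivalence theorem of \cite{YL} (Theorems 1.1 and 3.2) to obtain conditions (1)--(11) at once. The detour through Proposition \ref{supersolvable} is harmless but unnecessary, since solvability plus Lemma \ref{T} already places $G$ in the class covered by the cited theorem.
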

\begin{proof}
By Lemma \ref{T} we have $G$ is a solvable $\mathcal{T}$- group. It follows from {{\cite[Theorem 1.1]{YL}}} that (1) to (9) hold. By {{\cite[Theorem 3.2]{YL}}}, (10) and (11) hold.
\end{proof}

\begin{observation}
Assume that $G$ is a solvable PNC-group, then the following holds:
\begin{itemize}
\item[(1)] Let $H$ be a subgroup of $G$, then $(N_G (H))^G = G$.
\item[(2)] Assume that there is no subgroup $H$ of $G$ such that $N_G (H) <G$ and $N_G (H) = G$, then $G$ is a Dedekind group.
\end{itemize}
\end{observation}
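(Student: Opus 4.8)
The plan is to handle the two parts independently, since each reduces to material already in hand. For part (1) I would use nothing beyond the monotonicity of the normal closure and the defining equality of an NC-subgroup. Because $H \le N_G(H)$, applying normal closures gives $H^G \le (N_G(H))^G$, while trivially $N_G(H) \le (N_G(H))^G$. Since $G$ is a PNC-group, $H$ is an NC-subgroup, so $H^G N_G(H) = G$. Combining these containments yields
$G = H^G N_G(H) \le (N_G(H))^G \le G$,
and hence $(N_G(H))^G = G$. This is essentially a one-line argument and I expect no obstacle here.

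For part (2) I would read the hypothesis as: $G$ has no proper self-normalizing subgroup, i.e. there is no $H$ with $N_G(H) = H < G$ (so the displayed condition should read $N_G(H) = H$ in place of $N_G(H) = G$). Under this reading the argument is short. Let $M$ be any maximal subgroup of $G$. As $M < G$, the hypothesis forbids $N_G(M) = M$, so $M < N_G(M)$, and maximality forces $N_G(M) = G$; hence $M \unlhd G$. Thus every maximal subgroup of $G$ is normal, which is the classical criterion for nilpotency of a finite group, so $G$ is nilpotent. A nilpotent PNC-group is a Dedekind group by the proposition established earlier, and therefore $G$ is Dedekind.

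The substantive content is only in recognizing that ``no proper self-normalizing subgroup'' is precisely the normalizer condition, and then invoking the earlier proposition to pass from ``nilpotent PNC-group'' to ``Dedekind''; neither step is hard. The one thing I would be careful about is the interpretation of the hypothesis itself, since the literal condition $N_G(H) < G$ \emph{and} $N_G(H) = G$ is never satisfiable and would make the statement vacuously impose Dedekindness on every solvable PNC-group, which is false (for instance $S_3$ is a solvable PNC-group but is not Dedekind). The self-normalizing reading above is the one that makes the conclusion correct and fits the self-normalizer language used in the NSN- and ON-group definitions.
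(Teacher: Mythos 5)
Your arguments are correct in both parts, but each takes a genuinely different route from the paper's. For (1), the paper never applies the NC-condition to $H$ itself: it applies it to the subgroup $N_G(H)$, first using item (7) of its solvable $\mathcal{T}$-group proposition together with {\cite[Theorem 6(1)]{MA}} to obtain $N_G(N_G(H)) = N_G(H)$, whence $G = (N_G(H))^G \, N_G(N_G(H)) = (N_G(H))^G$. Your one-line argument, $G = H^G N_G(H) \leq (N_G(H))^G$ by monotonicity of the normal closure, is simpler and strictly more general: it uses only that $H$ is an NC-subgroup, so it needs neither solvability nor the $\mathscr{H}$-subgroup machinery, and it establishes (1) for arbitrary PNC-groups. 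For (2), you are right that the printed hypothesis is self-contradictory as written, and your $S_3$ test correctly rules out the vacuous reading; but your repair differs from the paper's intended one. The paper proves (2) by the single line ``it follows directly from (1)'', which reveals that the second condition is meant to be $(N_G(H))^G = G$ (a dropped superscript): since by (1) that condition holds for \emph{every} subgroup, the hypothesis collapses to $N_G(H) = G$ for all $H \leq G$, i.e.\ every subgroup is normal, and $G$ is Dedekind essentially tautologically. Your reading --- no proper self-normalizing subgroup, i.e.\ no $H$ with $N_G(H) = H < G$ --- is a weaker hypothesis, and your proof (every maximal subgroup $M$ satisfies $M < N_G(M) = G$, so all maximal subgroups are normal, $G$ is nilpotent, and the paper's proposition on nilpotent PNC-groups gives Dedekind) is correct, again works without solvability, and makes no use of part (1) at all. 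So you have proved a legitimately stronger version of (2) than the paper intends, at the cost of importing the normalizer/nilpotency criterion; just be aware that within the paper's own logic, (2) is designed as an immediate corollary of (1) under the $(N_G(H))^G = G$ reading.
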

\begin{proof}

(1) By proposition \ref{NE} (7), $H$ and $N_G (H)$ are $\mathscr{H}$-groups. It follows from {{\cite[Theorem 6(1)]{MA}}} that $N_G (H) = N_G (N_G (H))$. Hence we conclude that 
$$G = N_G (H) ^G N_G (N_G (H)) =  N_G (H) ^G.$$

(2) It follows directly from (1).
\end{proof}

\begin{lemma}\label{nilpotent}
Let $G$ be a solvable PNC-group. Then every nilpotent subgroup of $G$ is a Dedekind group.
\end{lemma}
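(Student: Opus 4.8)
The plan is to exploit the fact, already recorded in Proposition \ref{NE}(4), that a solvable PNC-group $G$ is a $\overline{\mathscr{T}}$-group, that is, every subgroup of $G$ is itself a $\mathscr{T}$-group. Once this is in hand, the statement follows almost immediately from the elementary observation that a nilpotent $\mathscr{T}$-group is a Dedekind group.

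Concretely, I would first let $N$ be an arbitrary nilpotent subgroup of $G$ and invoke Proposition \ref{NE}(4) to conclude that $N$ is a $\mathscr{T}$-group. Next I would use the defining feature of nilpotent groups that every subgroup is subnormal: for any $K \le N$ we have $K \unlhd\unlhd N$. Since $N$ is a $\mathscr{T}$-group, subnormality forces normality, so $K \unlhd N$. As $K$ was arbitrary, every subgroup of $N$ is normal in $N$, i.e.\ $N$ is a Dedekind group, which is exactly the claim.

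The one subtlety worth flagging is that one should resist the temptation to argue directly through Lemma \ref{T}. That lemma yields normality of subnormal subgroups \emph{of} $G$, but a nilpotent subgroup $N$ of $G$ need not be subnormal in $G$, and hence a subgroup $K$ of $N$ need not be subnormal in $G$ either; so Lemma \ref{T} cannot be applied to $K$ inside $G$. The purpose of routing the argument through the $\overline{\mathscr{T}}$-property is precisely that it transfers the $\mathscr{T}$-condition \emph{down} to the subgroup $N$, where the subnormality of $K$ is automatic from nilpotence. This is the only genuine obstacle, and it is conceptual rather than computational; the rest of the proof is immediate.
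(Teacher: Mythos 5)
Your proof is correct, and it differs from the paper's only in which item of Proposition \ref{NE} carries the load. Both arguments share the same skeleton: since $H$ (your $N$) is nilpotent, every $K \le H$ is subnormal in $H$, and one then transfers a $\mathscr{T}$-type property of $G$ down to $H$ to convert subnormality into normality. The paper does this via Proposition \ref{NE}(7), noting that $K$ is an $\mathscr{H}$-subgroup of $G$, and then quotes the external fact \cite[Lemma 2.1(2)]{YL} that a subnormal $\mathscr{H}$-subgroup is normal. You instead use Proposition \ref{NE}(4): $G$ is a $\overline{\mathscr{T}}$-group, so $H$ is itself a $\mathscr{T}$-group, and $K \unlhd\unlhd H$ immediately gives $K \unlhd H$ with no further citation. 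Your route is marginally more self-contained, at the cost of leaning on the equivalence (4), which (like (7)) ultimately rests on Lemma \ref{T} and the cited characterization of solvable $\mathscr{T}$-groups, so neither version is more elementary at bottom. Your cautionary remark about Lemma \ref{T} is well taken and explains exactly why the paper does not argue directly inside $G$: that lemma only yields normality in $G$ of subgroups subnormal in $G$, and a subgroup of a nilpotent subgroup need not be subnormal in $G$.
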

\begin{proof}
Let $H$ be a nilpotent subgroup of $G$, and let $K$ be a subgroup of $H$. One can easily find that $K \unlhd \unlhd H$.  By proposition \ref{NE} (7) and {{\cite[Lemma 2.1(2)]{YL}}}, we have $K \unlhd H$. By the choice of $K$,  we indicate that $H$ is a Dedekind group.
\end{proof}

\begin{proposition}
Let $G$ be a finite group. Then the following holds:
\begin{itemize}
\item[(1)] Assume that every proper non-nilpotent subgroup $H$ is a solvable PNC-group, then $G$ is solvable.
\item[(2)] Let $G$ be a PNC-group, and $p = \max(\pi (G))$. Suppose that there exists a subgroup $P \leq G$ such that $|P| =p$, then $P$ is a normal subgroup of $G$.
\end{itemize}
\end{proposition}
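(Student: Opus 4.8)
For part (1) the plan is to convert the hypothesis into a statement about supersolvability and then appeal to the structure of minimal non-supersolvable groups. First I would check that \emph{every} proper subgroup of $G$ is supersolvable: a proper nilpotent subgroup is supersolvable, since every finite nilpotent group is supersolvable; and a proper non-nilpotent subgroup is a solvable PNC-group by hypothesis, hence supersolvable by Proposition \ref{supersolvable}. Thus $G$ is a group all of whose proper subgroups are supersolvable. If $G$ is itself supersolvable then $G$ is solvable and we are done. Otherwise $G$ is a minimal non-supersolvable group, and by the classical theorem of Doerk every minimal non-supersolvable group is solvable; so $G$ is solvable in this case as well.

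For part (2) I read the statement as asserting that $G$ is a \emph{solvable} PNC-group (solvability cannot be omitted: $A_5$ is simple, hence a PNC-group, with $p=5=\max\pi(A_5)$ and a non-normal subgroup of order $5$). Since $G$ is a solvable PNC-group, Proposition \ref{supersolvable} gives that $G$ is supersolvable, and therefore the Sylow $p$-subgroup $P_0$ associated to the largest prime $p$ is normal in $G$; being normal, $P_0$ is the unique Sylow $p$-subgroup. As $|P|=p$, the subgroup $P$ is a $p$-group and hence $P\le P_0$. Now $P_0$ is nilpotent, so by Lemma \ref{nilpotent} it is a Dedekind group, whence $P\unlhd P_0$. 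Combining $P\unlhd P_0\unlhd G$ shows that $P$ is subnormal in $G$; since $G$ is a $\mathcal{T}$-group by Lemma \ref{T}, every subnormal subgroup is normal, and therefore $P\unlhd G$.

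The substantive step is part (1), and it rests entirely on the dichotomy for groups whose proper subgroups are all supersolvable; Doerk's theorem supplies this at once, the only elementary point being that finite nilpotent groups are supersolvable, which is exactly what lets the PNC hypothesis (stated only for non-nilpotent subgroups) cover the nilpotent subgroups as well. If one prefers to avoid Doerk's theorem, an alternative is to take a minimal counterexample $G$, note that all proper subgroups are then solvable so that $G$ is minimal non-solvable, and pass to $\overline G=G/R$ with $R$ the solvable radical; $\overline G$ is a minimal simple group, and Thompson's classification of such groups lets one exhibit in every case a proper non-nilpotent non-supersolvable subgroup (a copy of $A_4$ in the odd-characteristic groups $\mathrm{PSL}(2,p)$, $\mathrm{PSL}(2,3^{p})$, $\mathrm{PSL}(3,3)$, and a Borel or Frobenius subgroup with a non-cyclic minimal normal subgroup in $\mathrm{PSL}(2,2^{p})$ and $\mathrm{Sz}(2^{p})$). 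Pulling such a subgroup back to $G$ and using that supersolvability is inherited by quotients produces a contradiction. I expect the Doerk route to be the shortest and least error-prone.
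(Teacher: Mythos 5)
Your proof is correct, but it takes a genuinely different route from the paper's. The paper handles both parts in two lines by reducing to PE-groups: for (1) it applies Proposition \ref{NE}(10) to each proper non-nilpotent subgroup $H$ (legitimately, since such $H$ is a solvable PNC-group) to conclude $H$ is a PE-group, then cites Lu--Guo \cite[Theorem 3.4]{LG}; for (2) it asserts ``since $G$ is a PNC-group, $G$ is a PE-group'' and cites the same theorem. You avoid \cite{LG} entirely: in (1) you first upgrade the hypothesis to ``all proper subgroups of $G$ are supersolvable'' (nilpotent ones trivially, non-nilpotent ones via Proposition \ref{supersolvable}) and then invoke Doerk's classical theorem that minimal non-supersolvable groups are solvable; in (2) you argue from supersolvability (normal Sylow $p$-subgroup $P_0$ for the largest prime $p$), then get $P \unlhd\unlhd G$ (via Lemma \ref{nilpotent}, though mere subnormality of $P$ in the $p$-group $P_0$ would already suffice, making the Dedekind step a harmless detour) and finish with the $\mathcal{T}$-property from Lemma \ref{T}. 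Each route leans on one external classical ingredient of comparable weight (Doerk versus Lu--Guo), so neither is cheaper, but yours is checkable from first principles given the paper's earlier results plus one standard theorem. Your fallback sketch via Thompson's minimal simple groups is also sound, if heavier than needed.

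More importantly, your reading of (2) exposes a genuine defect in the paper rather than a gap in your own argument: as literally stated, (2) is false, and your counterexample $A_5$ is exactly right --- $A_5$ is a PNC-group (every nontrivial subgroup of a simple group has normal closure equal to the whole group), $\max \pi(A_5) = 5$, yet its subgroups of order $5$ are not normal. The same example refutes the paper's key step ``$G$ is a PNC-group, hence a PE-group'': for $H$ of order $5$ in $A_5$ one has $H^{A_5} = A_5$, so $N_{A_5}(H) \cap H^{A_5} = N_{A_5}(H)$ has order $10$ and is strictly larger than $H$, whence $A_5$ is PNC but not PE. The implication PNC $\Rightarrow$ PE is only available through Proposition \ref{NE}(10), whose hypothesis is solvability, so the solvability assumption you insert in (2) is not optional; your amended statement and proof are the correct repair of what the paper actually establishes.
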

\begin{proof}
(1) For any proper non-nilpotent subgroup $H$ of $G$, it follows from proposition \ref{NE} (10) that every subgroup of $H$ is an NE-group of $H$. Hence $H$ is a PE-group. By {{\cite[Theorem 3.4]{LG}}}, $G$ is solvable.

(2) Since $G$ is a PNC-group, $G$ is a PE-group. Hence by {{\cite[Theorem 3.4]{LG}}}, we have $P \unlhd G$.
\end{proof}
\begin{proposition}\label{normal complement}
Let $G$ be a solvable PNC-group, and $p = \min \lbrace \pi (G) \rbrace$. Then $G$ is $p$-nilpotent.
\end{proposition}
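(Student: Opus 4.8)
The plan is to deduce $p$-nilpotency from supersolvability rather than wrestling with normal closures directly. By Proposition \ref{supersolvable}, every solvable PNC-group $G$ is supersolvable, so it suffices to show that an arbitrary supersolvable group admits a normal $p$-complement for the smallest prime $p=\min\pi(G)$. The tool I would use is the fact that every supersolvable group possesses a Sylow tower of supersolvable type: there is a chain of normal subgroups $1=N_0\unlhd N_1\unlhd\cdots\unlhd N_r=G$ whose successive factors $N_i/N_{i-1}$ are isomorphic to the Sylow $p_i$-subgroups of $G$, with the primes arranged in strictly decreasing order $p_1>p_2>\cdots>p_r$. This in turn rests on the classical observation that in a supersolvable group the Sylow subgroup belonging to the largest prime divisor is normal, after which one passes to the quotient and inducts on $|G|$.

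Granting the tower, the argument is short. Since the primes decrease along the tower, the last one, $p_r$, is exactly $p=\min\pi(G)$, and the top factor $G/N_{r-1}$ is isomorphic to a Sylow $p$-subgroup $P$ of $G$. Hence $N_{r-1}\unlhd G$ with $|G:N_{r-1}|=|P|$ and $p\nmid|N_{r-1}|$, so $N_{r-1}$ is a normal Hall $p'$-subgroup of $G$; that is, $N_{r-1}$ is a normal $p$-complement and $G$ is $p$-nilpotent. Note that the PNC-hypothesis enters only through Proposition \ref{supersolvable}; the genuine content is the (well-known) Sylow-tower behaviour of supersolvable groups.

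The main point to be careful about is the direction of the ordering: one must ensure the tower is built from the largest prime downwards, so that the smallest prime ends up on top and the normal complement sits just below it. If one prefers a self-contained route, I would instead invoke Proposition \ref{NE}(2) together with Gasch\"{u}tz's refinement of the structure theorem, writing $G=L\rtimes M$ with $L=[G,G']$ an abelian normal Hall subgroup of odd order, $M$ Dedekind, and $G$ acting on $L$ by power automorphisms. If $p\in\pi(M)$ then, $M\cong G/L$ being nilpotent, its Hall $p'$-subgroup is characteristic, giving a normal $p$-complement at once. If $p\in\pi(L)$, then $p$ is odd, the Sylow $p$-subgroup $P$ lies in the abelian group $L$, and $M$ acts on $P$ by power automorphisms; here the crux is that a power automorphism of $P$ of order prime to $p$ has order dividing $p-1$, whereas every prime divisor of $|M|$ exceeds $p=\min\pi(G)$, forcing $M$ to centralise $P$. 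Then $P\le Z(G)$ and Burnside's normal $p$-complement theorem finishes the proof. This second approach is the one where the real work lies, precisely in the power-automorphism order estimate.
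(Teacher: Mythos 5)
Your proof is correct, but it takes a genuinely different route from the paper's. The paper works directly with the NE-subgroup machinery: it splits into the cases $p=2$ and $p>2$; for $p=2$ it uses Lemma \ref{nilpotent} to see that a Sylow $2$-subgroup $S$ is Dedekind, so every cyclic $P\le S$ of order $2$ or $4$ is normal in $S$ and hence subnormal in $N_G(S)$, whence $P\unlhd N_G(S)$ by {\cite[Lemma 2.1(2)]{YL}} and $2$-nilpotency follows from {\cite[Lemma 2.5(1)]{YL}}; for odd $p$ it uses that all Sylow subgroups are abelian (Dedekind of odd order) and applies {\cite[Lemma 5]{SL}} to the subgroups of order $p$. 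Your primary argument bypasses all of this: supersolvability (Proposition \ref{supersolvable}) plus the classical Sylow-tower theorem for supersolvable groups immediately produces a normal Hall $p'$-subgroup, since the smallest prime sits at the top of the descending tower, and you correctly flag the only delicate point (the direction of the ordering). This is shorter and more elementary, quotes only one standard textbook fact instead of three lemmas from the NE literature, and actually proves the stronger statement that \emph{every} supersolvable group is $p$-nilpotent for its smallest prime divisor, so the PNC hypothesis enters only through supersolvability. Your alternative route via $G=L\rtimes M$ is also sound, with one caveat worth noting: the paper's Proposition \ref{NE}(2) states only that $L=[G,G']$ is an abelian normal Hall subgroup with $M$ Dedekind, so the odd order of $L$ and the power-automorphism action that your order estimate (a power automorphism of $P$ of order prime to $p$ has order dividing $p-1$, while every prime in $\pi(M)$ exceeds $p$) relies on must indeed be imported from Gasch\"utz's theorem, as you do; with that done, the centralization $P\le Z(G)$ and Burnside's theorem close the argument correctly, though this second route is longer and in effect re-derives the tower fact in a special case.
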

\begin{proof}
One can easily conclude from proposition \ref{NE} (10) that every subgroup of $G$ is an NE-subgroup of $G$. If $p=2$, For $S \in {\rm{Syl}}_2 (G)$, and $P \leq S$ where $P$ is a cyclic subgroup with the order of 2 or 4, it follows from Lemma \ref{nilpotent} that $S$ is a Dedekind group and $P$ is a normal subgroup of $S$. Hence $P$ is a subnormal subgroup of $N_G (S)$. By {{\cite[Lemma 2.1(2)]{YL}}}, $P \unlhd N_G (S)$. By the choice of $P$ we indicate from {{\cite[Lemma 2.5(1)]{YL}}} that $G$ is 2-nilpotent.

If $p>2$, then $G$ is a solvable PNC-group with odd order. Since a Dedekind group of odd order is an abelian group, it follows from Lemma \ref{nilpotent} that every Sylow subgroup of $G$ is abelian. As every subgroup of $G$ with order $p$ is an NE-subgroup of $G$, we conclude from {{\cite[Lemma 5]{SL}}} that $G$ is $p$-nilpotent.
\end{proof}
\begin{observation}
Let $G$ be a solvable PNC-group. Then for any $p \in \pi (G)$ and $p$-subgroup $P$, $P \in {\rm{Syl_p}} (P^G)$ and $|G|_p = |N_G (P)|_p$.
\end{observation}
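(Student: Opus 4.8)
The statement bundles two assertions: that the $p$-subgroup $P$ is a Sylow $p$-subgroup of its normal closure $P^G$, and that $|G|_p = |N_G(P)|_p$. The plan is to extract both directly from the list of equivalent characterizations of solvable $\mathcal{T}$-groups recorded in Proposition \ref{NE}, together with Lemma \ref{nilpotent}; essentially no new computation is needed once the right characterization is invoked, so the real task is selecting the correct equivalent condition for each part.

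For the first assertion I would appeal to Proposition \ref{NE}(9): every subgroup of $G$, and in particular the $p$-group $P$, is normally embedded in $G$. Since $P$ is a $p$-group it is its own unique Sylow $p$-subgroup, so normal embedding says precisely that $P \in \mathrm{Syl}_p(N)$ for some $N \unlhd G$. As $N$ is a normal subgroup containing $P$, the normal closure is squeezed between them: $P \le P^G \le N$. Because $P$ is a maximal $p$-subgroup of $N$ we get $|P| = |N|_p \ge |P^G|_p \ge |P|$, which forces $|P^G|_p = |P|$, i.e. $P \in \mathrm{Syl}_p(P^G)$.

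For the second assertion I would instead use Lemma \ref{nilpotent}. Pick a Sylow $p$-subgroup $S$ of $G$ with $P \le S$. As a $p$-group $S$ is nilpotent, so Lemma \ref{nilpotent} makes $S$ a Dedekind group, whence every subgroup of $S$ is normal in $S$; in particular $P \unlhd S$ and therefore $S \le N_G(P)$. This yields $|G|_p = |S| \le |N_G(P)|_p \le |G|_p$, and hence $|G|_p = |N_G(P)|_p$. Alternatively one can combine the first assertion with the defining NC-condition $G = P^G N_G(P)$: taking $p$-parts in $|G| = |P^G|\,|N_G(P)| / |P^G \cap N_G(P)|$ and using $|P^G|_p = |P| = |P^G \cap N_G(P)|_p$ gives the same identity.

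I do not anticipate a genuine obstacle; the difficulty is conceptual rather than technical, namely recognizing that normal embedding settles the Sylow claim and that the Dedekind property of Sylow subgroups settles the order claim. The one step demanding a little care is the Sylow-transfer in the first assertion — confirming that a Sylow $p$-subgroup of $N$ lying inside the intermediate subgroup $P^G$ stays Sylow there — which is immediate from the order estimate $|P| = |N|_p \ge |P^G|_p \ge |P|$. For the second assertion one must simply remember to choose $S$ so that it contains $P$ before invoking the Dedekind property.
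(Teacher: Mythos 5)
Your proposal is correct, and it splits from the paper's proof in an interesting way on the first assertion while coinciding on the second. For $|G|_p = |N_G(P)|_p$ you argue exactly as the paper does: take $P \le S \in \mathrm{Syl}_p(G)$, apply Lemma \ref{nilpotent} to make $S$ Dedekind, conclude $P \unlhd S$, hence $S \le N_G(P)$. For $P \in \mathrm{Syl}_p(P^G)$, however, the paper stays inside a single order computation: from the NC-condition $G = P^G N_G(P)$ it writes $|G|_p = |N_G(P)|_p\,|P^G|_p / |P|_p$ and cancels using $|N_G(P)|_p = |G|_p$ to get $|P^G|_p = |P|_p$ --- note that identifying the denominator as $|P|_p$ silently uses $\left(N_G(P) \cap P^G\right)_{\text{$p$-part}} = |P|_p$, which ultimately rests on the NE-property $N_G(P) \cap P^G = P$ from Proposition \ref{NE}(10), a step the paper leaves implicit. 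You instead invoke Proposition \ref{NE}(9): normal embedding hands you $N \unlhd G$ with $P \in \mathrm{Syl}_p(N)$, and the squeeze $P \le P^G \le N$ with $|P| = |N|_p \ge |P^G|_p \ge |P|$ finishes it with no appeal to the intersection at all. Your route buys a cleaner, self-contained argument for the Sylow claim (and sidesteps the gap in the paper's justification of the denominator), at the cost of proving the two assertions independently; the paper's route is more economical in that it makes direct use of the defining equation $G = P^G N_G(P)$ and derives the Sylow claim from the normalizer claim in one pass --- a direction of deduction you also recover in your alternative remark, which is valid once $|P^G \cap N_G(P)|_p = |P|$ is observed via $P \le P^G \cap N_G(P) \le P^G$.
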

\begin{proof}
Let $P \leq S \in {\rm{Syl_p}} (G)$. Then by Lemma \ref{nilpotent} we conclude that $P \unlhd S$. Hence $N_G (P) \geq S$ and $|G|_p = |N_G (P)|_p$. It follows from Dedekind identity that 
$$|G|_p = \frac{|N_G (P)|_p \cdot |P^G|_p}{|P|_p} = \frac{|G|_p \cdot |P^G|_p}{|P|_p} $$
Therefore $|P^G|_p = |P|_p$, which implies that $P \in {\rm{Syl_p}} (G)$.
\end{proof}
\begin{proposition}
Let $G$ be a group. If every non-nilpotent proper subgroup of
$G$ is a solvable PNC-group, then $G$ is solvable.
\end{proposition}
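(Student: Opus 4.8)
The plan is to reduce the hypothesis to a purely PE-theoretic statement and then invoke the known solvability criterion \cite[Theorem 3.4]{LG}, in the same spirit as part (1) of the earlier proposition in this section. First I would fix an arbitrary non-nilpotent proper subgroup $H$ of $G$. By hypothesis $H$ is a solvable PNC-group, so Proposition \ref{NE}(10) applies \emph{to $H$} and yields that every subgroup of $H$ is an NE-subgroup of $H$. In particular every minimal subgroup of $H$ is an NE-subgroup of $H$, which is exactly the defining property of a PE-group. Hence each non-nilpotent proper subgroup of $G$ is a PE-group.

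The second step is then immediate: the hypothesis of \cite[Theorem 3.4]{LG} is precisely that every non-nilpotent proper subgroup of $G$ be a PE-group, and its conclusion is that $G$ is solvable. Applying it closes the argument. The entire content of the proof is therefore the observation that the strong PNC-condition on each $H$ collapses, via Proposition \ref{NE}(10), to the much weaker PE-condition needed to feed into \cite[Theorem 3.4]{LG}.

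I expect no genuine obstacle here, since the assertion coincides essentially with the already-established part (1) of the earlier proposition; the only points deserving a line of care are the degenerate cases. If $G$ is itself nilpotent it is trivially solvable, and if $G$ has no non-nilpotent proper subgroup at all (so that the hypothesis is vacuous), then $G$ is either nilpotent or a minimal non-nilpotent group, and such groups are solvable by Schmidt's classical result. In the remaining case the reduction above supplies the PE-hypothesis for \emph{all} non-nilpotent proper subgroups, so the cited theorem applies verbatim. The one verification I would make explicit is that \cite[Theorem 3.4]{LG} indeed quantifies over all such subgroups rather than only the maximal ones, but this is exactly the form in which it was used earlier in the section.
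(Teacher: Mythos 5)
Your proof is correct and follows essentially the same route as the paper: both reduce the hypothesis via Proposition \ref{NE}(10) to the statement that every non-nilpotent proper subgroup is a PE-group, and then invoke \cite[Theorem 3.4]{LG} to conclude solvability. Your extra remarks on the degenerate cases (nilpotent $G$, or $G$ with no non-nilpotent proper subgroup, handled by Schmidt's theorem) are a careful touch the paper leaves implicit in the citation, but they do not alter the argument.
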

\begin{proof}
It follows from proposition \ref{NE} that any non-nilpotent proper subgroup of $G$ is a PE-group. Hence by {{\cite[Theorem 3.4]{LG}}} we conclude that $G$ is solvable.
\end{proof}
\begin{lemma} \label{component}
Let $G$ be a finite group. 
\begin{itemize}
\item[(1)] Suppose that $G =AB$, where $A,B \unlhd G$. If for any $a \in A$ and $b \in B$, we have $ab=ba$, then $G' \leq A' B'$.
\item[(2)] Suppose that $G = A_1 A_2 \cdots A_n$, where $A_1,A_2,\cdots,A_n \unlhd G$. If for any $a_i \in A_i$, $a_j \in A_j$ with $i \neq j$, $a_i a_j = a_j a_i$, then $G' \leq A_1 ' A_2 ' \cdots A_n '$.
\end{itemize}
\end{lemma}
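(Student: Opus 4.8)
The plan is to prove (1) by a direct commutator computation and then obtain (2) by induction on $n$ with (1) as the engine. For (1), I would start from the observation that since $G=AB$, every element of $G$ has the form $ab$ with $a\in A$ and $b\in B$, so $G'$ is generated by the commutators $[a_1b_1,a_2b_2]$ with $a_1,a_2\in A$ and $b_1,b_2\in B$. The heart of the argument is to expand such a commutator and exploit the hypothesis that every element of $A$ commutes with every element of $B$. Writing $[a_1b_1,a_2b_2]=(a_1b_1)^{-1}(a_2b_2)^{-1}(a_1b_1)(a_2b_2)=b_1^{-1}a_1^{-1}b_2^{-1}a_2^{-1}a_1b_1a_2b_2$ and freely interchanging the $A$-letters past the $B$-letters, I expect all the $A$-letters to collect (in their original order) into $a_1^{-1}a_2^{-1}a_1a_2=[a_1,a_2]$ and all the $B$-letters into $b_1^{-1}b_2^{-1}b_1b_2=[b_1,b_2]$, yielding the clean identity $[a_1b_1,a_2b_2]=[a_1,a_2][b_1,b_2]$.

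Once this identity is in hand, the rest is bookkeeping. Each $[a_1,a_2]$ lies in $A'$ and each $[b_1,b_2]$ lies in $B'$, so every generating commutator of $G'$ lies in $A'B'$. Because $A,B\unlhd G$, the subgroups $A'$ and $B'$ are characteristic in $A$ and $B$ respectively and hence normal in $G$, so their product $A'B'$ is itself a (normal) subgroup of $G$. A subgroup containing a generating set of $G'$ contains $G'$, whence $G'\le A'B'$.

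For (2) I would induct on $n$, the case $n=2$ being exactly (1). Setting $B=A_1A_2\cdots A_{n-1}$ and $C=A_n$, I first note that $B\unlhd G$ as a product of normal subgroups, and that every element of $B$ commutes with every element of $C$: an element of $B$ is a product of elements of $A_1,\dots,A_{n-1}$, each of which commutes with $A_n$ by hypothesis. Applying (1) to $G=BC$ gives $G'\le B'C'=B'A_n'$. The subgroups $A_1,\dots,A_{n-1}$ are normal in $G$ and contained in $B$, hence normal in $B$, and the pairwise-commuting hypothesis persists inside $B$, so the induction hypothesis yields $B'\le A_1'A_2'\cdots A_{n-1}'$. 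Combining the two inclusions gives $G'\le A_1'A_2'\cdots A_{n-1}'A_n'$, as required.

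The only genuinely delicate point is the commutator expansion in (1): one must check that, after moving every $A$-letter past every $B$-letter, the relative order within each family is preserved, so that the two surviving factors are precisely $[a_1,a_2]$ and $[b_1,b_2]$ and not some scrambled product. Everything else --- the normality of the products $A'B'$ and $A_1'\cdots A_n'$, and the persistence of the hypotheses under the inductive reduction to $B$ --- is routine.
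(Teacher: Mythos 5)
Your proposal is correct and takes essentially the same route as the paper: both prove (1) by reducing the commutator $[a_1b_1,a_2b_2]$ to $[a_1,a_2][b_1,b_2]$ using the hypothesis $[A,B]=1$ (the paper does this via the standard identities $[xy,z]=[x,z]^{y}[y,z]$ and $[x,yz]=[x,z][x,y]^{z}$ rather than by shuffling letters in the expanded word, arriving at the equal product $[b_1,b_2][a_1,a_2]$), and both deduce (2) by induction on $n$. Your version is in fact slightly more complete, since you spell out the inductive step and the justification that $A'B'$ is a subgroup, both of which the paper leaves implicit.
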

\begin{proof}
(1) Let $a_1,a_2 \in A$ and $b_1,b_2 \in B$. It follows that 
\begin{align*}
[a_1 b_1,a_2 b_2] &= [a_1 b_1, b_2] [a_1 b_1, a_2]^{b_2} = [a_1 b_1, b_2] [a_1 b_1, a_2] \\
&= [a_1,b_2]^{b_1} [b_1,b_2] [a_1,a_2]^{b_1} [b_1,a_2] = [b_1,b_2][a_1,a_2] \in A'B'.
\end{align*}
Hence we conclude that $G' \leq A'B'$.

(2) It follows directly from (1) by induction on $n$.
\end{proof}
\begin{proposition}\label{Fitting Group}
Let $G$ be a PNC-group. Then the nilpotence class of $F^{*} (G)$ is at most 2.
\end{proposition}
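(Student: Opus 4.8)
The plan is to reduce everything to the Fitting subgroup via the $\mathcal{T}$-group property supplied by Lemma \ref{T}, and then exploit the transitivity of subnormality. Recall that $F^{*}(G)=F(G)E(G)$ is the central product of the Fitting subgroup $F(G)$ with the layer $E(G)$, that $[F(G),E(G)]=1$, and that $F(G)=\prod_{p\in\pi(G)}O_p(G)$ is the direct product of the maximal normal $p$-subgroups $O_p(G)$; hence the nilpotence class of $F(G)$ is the maximum of the classes of the $O_p(G)$. So the first goal is to bound the class of each $O_p(G)$ by $2$.

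First I would fix a prime $p$ and set $P=O_p(G)\unlhd G$. Let $K\le P$ be arbitrary. Since $P$ is a $p$-group it is nilpotent, so $K\unlhd\unlhd P$; combining this with $P\unlhd G$ and the transitivity of subnormality gives $K\unlhd\unlhd G$. By Lemma \ref{T}, $G$ is a $\mathcal{T}$-group, whence $K\unlhd G$ and in particular $K\unlhd P$. As $K$ was arbitrary, $P$ is a Dedekind group. A Dedekind $p$-group is either abelian or isomorphic to $Q_8\times E$ with $E$ elementary abelian, and in either case its derived subgroup is central; therefore the class of $P=O_p(G)$ is at most $2$. Taking the maximum over $p$ shows that $F(G)$ has nilpotence class at most $2$.

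It remains to pass from $F(G)$ to $F^{*}(G)$, i.e. to control the layer $E(G)$. Each component $L$ of $G$ is a subnormal quasisimple subgroup, so $L\unlhd G$ by the $\mathcal{T}$-group property. Since $E(G)$ is a central product of perfect quasisimple groups, the \emph{only} way $F^{*}(G)$ can be nilpotent of class at most $2$ is to have $E(G)=1$, so the task here is to rule out nontrivial components. I would therefore test the NC-condition $H^{G}N_G(H)=G$ on carefully chosen subgroups $H\le L$, using that $H^{G}\le L$ because $L\unlhd G$ (so that $H^{G}N_G(H)=G$ already forces $LN_G(H)=G$), and try to derive a contradiction that eliminates $L$.

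The main obstacle is precisely this last step, the analysis of $E(G)$: the Dedekind argument disposes of $F(G)$ cleanly, but a quasisimple normal subgroup is itself a $\mathcal{T}$-group and does not visibly violate the NC-condition, so pinning down the behaviour of the components is the delicate part of the argument. In the solvable setting that dominates the paper this difficulty evaporates, since a solvable group has no components and hence $F^{*}(G)=F(G)$ automatically, reducing the statement to the class bound for $F(G)$ established above.
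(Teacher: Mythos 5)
Your treatment of $F(G)$ is correct and, if anything, cleaner than the paper's: you argue directly that every subgroup of $O_p(G)$ is subnormal in $G$, hence normal by Lemma \ref{T}, so each $O_p(G)$ (and thus $F(G)$) is Dedekind and has class at most $2$. The paper instead invokes Lemma \ref{nilpotent}, whose hypothesis is a \emph{solvable} PNC-group, even though the proposition assumes only PNC, so your route avoids a hypothesis mismatch in the published argument.

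The gap you flag --- eliminating the components --- is genuine and, as you suspected, cannot be closed, because the proposition as stated is false for non-solvable PNC-groups. Every nonabelian simple group $S$ is a PNC-group (the paper itself notes this at the start of Section \ref{10002}: for $1 \neq H \le S$ one has $H^S = S$), and $F^{*}(S) = E(S) = S$ is not nilpotent, let alone of class at most $2$. The paper's own proof founders at exactly the point you isolated: it uses the same decomposition $F^{*}(G) = F(G)E(G)$ and asserts $E_i' \le Z(E_i)$ for each component $E_i$, but components are quasisimple, hence perfect, so $E_i' = E_i \not\le Z(E_i)$ whenever $E_i \neq 1$. (The step $E(G)' \le E_1' E_2' \cdots E_n'$ via Lemma \ref{component} is fine; it is the containment in the centre that fails.) Your observation that class at most $2$ forces $E(G)=1$, since a nontrivial perfect subgroup cannot sit inside a nilpotent group, is exactly right; the correct statement should add solvability (or some hypothesis killing the layer), and then $F^{*}(G) = F(G)$ and your Dedekind argument completes the proof.
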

\begin{proof}
By Lemma \ref{T} we conclude that $G$ is a $\mathcal{T}$-group. Hence any component of $G$ is normal in $G$. By the definition of $E(G)$, there exists some components $E_1,E_2,\cdots,E_n$ with $[E_i,E_j]=1,\,i,j=1,2,\cdots,n$ such that
$$E(G) = E_1 \cdot E_2 \cdots E_n. $$
By lemma \ref{component}, it follows that
$$E(G)' \leq E_1 ' \cdot E_2 ' \cdots E_n ' \leq Z(E_1) \cdot Z(E_2) \cdots Z(E_n) \leq Z(E(G)). $$
Since $F(G)$ is nilpotent, we indicate from Lemma \ref{nilpotent} that $F(G)$ is a Dedekind group. Therefore we conclude that $F(G)' \leq Z(F(G))$. Since $F^{*} (G) = E(G) F(G)$ and $[E(G),F(G)]=1$, by lemma \ref{component} again we have 
$$F^{*} (G)' \leq E(G)' F(G)' \leq Z(E(G)) \cdot Z(F(G)) \leq Z(F^{*} (G)).$$
Thus the nilpotence class of $F^{*} (G)$ is at most 2.
\end{proof}
We end this section with some basic observations about PNC-group.
\begin{proposition}
Let $G$ be a solvable PNC-group, then the following hold:
\begin{itemize}
\item[(1)] The Fitting height of $G$ is bounded by 3.
\item[(2)] There exists a PE-group $K$ such that $G = K F(G)$, where $F(G)$ denotes the Fitting subgroup of $G$.
\item[(3)] Let $G$ be a finite group of odd order. Then we have 
\item[(4)] For any $p \in \pi (G)$, the $p$-length of $G$ is bounded by 1.
\item[(5)] For any $p \in \pi (G)$, $G$ is $p$-nilpotent or $p$-perfect.
\item[(6)] The Frattini subgroup of $G$ is abelian.
\item[(7)] $G$ is metabelian.
\item[(8)] If $G$ is of odd order, then $G' \leq Fit (G)$.
\end{itemize}
\end{proposition}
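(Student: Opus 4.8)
The plan is to derive every item from the single structural decomposition supplied by Proposition~\ref{NE}(2): writing $G=L\rtimes M$ with $L=[G,G']$ an abelian normal Hall subgroup and $M$ a Dedekind group, I will read off each conclusion. Two standing observations drive the whole argument. First, since $L$ is a \emph{Hall} subgroup, $\pi(L)$ and $\pi(M)$ are disjoint, so every prime $p\in\pi(G)$ divides exactly one of $|L|,|M|$. Second, since $L$ is abelian and normal and $G$ is a $\mathcal{T}$-group (Lemma~\ref{T}), every subgroup of $L$ is subnormal, hence normal, in $G$; thus $M$ acts on $L$ by power automorphisms, and I will use that the power automorphisms of an abelian group form an abelian subgroup of $\mathrm{Aut}(L)$.

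For the nilpotent-length statements: the normal series $1\le L\le G$ has both factors nilpotent ($L$ abelian, $G/L\cong M$ Dedekind), so the Fitting height is at most $2$, a fortiori at most $3$, giving~(1). For~(4), subadditivity of $p$-length along $1\le L\le G$ gives $\ell_p(G)\le \ell_p(L)+\ell_p(G/L)$; by the Hall/coprimality remark one summand is $0$ and the other is $\le 1$ (a normal abelian, resp. nilpotent, section has $p$-length $\le 1$), so $\ell_p(G)\le 1$. For~(5) I split on which factor $p$ divides. If $p\mid|L|$, then the Sylow $p$-subgroup $P$ lies in $L\le[G,G']\le G'$, whence $p\nmid|G/G'|$ and $G=O^p(G)$ is $p$-perfect. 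If $p\mid|M|$, write $M=M_p\times M_{p'}$; using that $L$ is abelian and normal one checks $LM_{p'}\unlhd G$, and this is a normal $p$-complement, so $G$ is $p$-nilpotent.

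The commutator structure is the source of~(2),(7),(8). By the power-automorphism remark the image of $M\to\mathrm{Aut}(L)$ is abelian, so $M'\le C_M(L)$ and $[L,M']=1$; since $L\cap M'=1$ and $G'=LM'$, we get $G'=L\times M'$, an abelian group, proving~(7). For~(8), if $|G|$ is odd then $M$ is a Dedekind group of odd order, hence abelian, so $M'=1$ and $G'=L\le F(G)$; the same observation that $M$ is abelian in the odd-order case handles~(3). For~(2) I take $K=M$: being Dedekind, every subgroup of $M$ is normal, hence an NE-subgroup, so $M$ is a PE-group, and $G=LM=F(G)M$ because $L\le F(G)$.

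Finally,~(6) is the one point needing a genuinely new argument, and is where I expect the difficulty. Here $\Phi(G)$ is nilpotent, so Lemma~\ref{nilpotent} makes it a Dedekind group, and it remains to exclude the Hamiltonian case. Assuming $\Phi(G)$ non-abelian, its Sylow $2$-subgroup is characteristic, hence normal in $G$, and contains a copy $Q\cong Q_8$; as above every subgroup of this Dedekind normal $2$-group is normal in $G$, so every subgroup of $Q$ is normal in $G$. The automorphisms of $Q_8$ stabilising all cyclic subgroups are exactly the inner ones, so $G/C_G(Q)\hookrightarrow\mathrm{Inn}(Q)=Q/Z(Q)$ and therefore $G=QC_G(Q)$. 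Since $Q\le\Phi(G)$ consists of non-generators, $G=\Phi(G)C_G(Q)$ forces $C_G(Q)=G$, i.e. $Q\le Z(G)$ --- impossible for the non-abelian $Q$. This contradiction gives~(6). The genuine obstacle is isolating this $Q_8$-argument; the remaining items are bookkeeping on the decomposition $G=L\rtimes M$.
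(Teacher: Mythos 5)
Your proposal is correct, and it takes a genuinely different route from the paper's. The paper proves this proposition almost entirely by citation: (1) and (2) from S.~Li's Theorem~1 on PE-groups, (3) from abelian Sylow subgroups plus an exercise in Kurzweil--Stellmacher, (4) from the $\mathscr{C}_p$ property plus Robinson, (5) and (6) directly from Robinson, (7) from the structure theorem for supersolvable $\mathcal{T}$-groups, and (8) via SSA-groups and Cs\"org\"o--Herzog. You instead derive everything from the single decomposition $G = L \rtimes M$ of Proposition~\ref{NE}(2), together with the key observation that the $\mathcal{T}$-property (Lemma~\ref{T}) forces $M$ to act on the abelian normal Hall subgroup $L$ by power automorphisms, which form an abelian subgroup of $\mathrm{Aut}(L)$ (for finite abelian $L$ they are universal power maps, by Cooper's theorem). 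This buys several things the paper does not make explicit: a sharper bound in (1) (Fitting height at most $2$, since $L$ is abelian with Dedekind quotient, whereas the PE-group citation only gives $3$); the identification $G' = L \times M'$, which makes (7) and (8) immediate; clean coprimality splittings for (4) and (5), where your verification that $LM_{p'} \unlhd G$ is sound since $M_{p'} \unlhd M$ and $m^l = m[m,l] \in LM_{p'}$ for $l \in L$; and for (6) an honest proof in place of the cited exercise --- your $Q_8$ argument (every subgroup of a characteristic Hamiltonian $2$-subgroup of $\Phi(G)$ is normal in $G$ by the $\mathcal{T}$-property, the subgroup-stabilizing automorphisms of $Q_8$ are exactly the inner ones, so $G = QC_G(Q)$, and the non-generator property of $\Phi(G)$ then forces $Q \le Z(G)$) is precisely the standard solution to that exercise. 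The paper's route is shorter and transparent about provenance; yours is self-contained.

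One caveat: item (3) is truncated in the paper's statement (``Then we have'' with no conclusion); the paper's proof reveals the intended content is $G' \cap Z(G) = 1$ for $G$ of odd order. Your remark that ``$M$ is abelian in the odd-order case handles (3)'' does not by itself establish this, but your framework does with one more line: for $|G|$ odd one has $G' = L = [G,L] = [L,M]$, and since the action of $M$ on the abelian group $L$ is coprime, $L = [L,M] \times C_L(M)$, whence $C_L(M) = 1$ and $Z(G) \cap G' \le C_L(M) = 1$. Spell that out and your proof covers every item.
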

\begin{proof}
Since every subgroup of $G$ is an NE-subgroup, $G$ is a PE-group. It follows directly from {{\cite[Theorem 1]{SL}}} that (1) and (2) hold. 

(3) Since any Sylow subgroup of $G$ has odd order, it follows from Lemma \ref{nilpotent} that any Sylow subgroup of $G$ is abelian. We indicate from {{\cite[Exercise 7.2.3]{KS}}} that $G' \cap Z(G)=1$.

(4) It follows from proposition \ref{NE} (5) that $G$ satisfies $\mathscr{C}_{p}$ for all $p \in \pi (G)$. Since $G$ is supersolvable, $G$ is $p$-solvable for any $p \in \pi (G)$. Thus by {{\cite[Corollary of Theorem 3]{DR2}}} we conclude that for any $p \in \pi (G)$, the $p$-length of $G$ is bounded by 1.

(5) It follows directly from {{\cite[Theorem 3]{DR3}}}.

(6) It follows directly from {{\cite[Exercise 13.4.5]{DR2}}}.

(7) By proposition \ref{NE} (3), $G$ is a supersolvable $\mathcal{T}$-group. It follows from {{\cite[Theorem 2.3.1]{DR}}} that $G$ is metabelian.

(8) It follows from Lemma \ref{nilpotent} that all Sylow subgroups of $G$ are Dedekind groups. Since $G$ is of odd order, we conclude that all Sylow subgroups of $G$ are abelian groups. Hence $G$ is a SSA-group. Since $G$ is supersolvable, by {{\cite[Theorem 19]{PM}}} we conclude that $G' \leq Fit (G)$.
\end{proof}

\begin{remark}
(1) It is natural for us to compare NC-subgroup with NE-subgroup. However, compared to NE-subgroup, NC-subgroup fails to satisfy the basic properties which NE-subgroup possesses such as  {{\cite[Lemma 2.1]{YL}}}. For example, let $G ={\rm U}_2({\mathbb F}_3) $. One can find that there exists a subgroup $C_2 \times C_4$ such that $N_G (C_2 \times C_4) = C_4 ^2 < C_4\wr C_2$, and $(C_2 \times C_4) ^G = G = {\rm U}_2({\mathbb F}_3)$. Hence we have  $N_G (C_2 \times C_4) \cdot  (C_2 \times C_4) ^G = G$. Now consider a subgroup $ C_2 \times C_4$ where $ C_2 \times C_4 \leq C_4\wr C_2 $, and $C_2 \times C_4 \ntrianglelefteq C_4\wr C_2$. It follows that $N_{C_4\wr C_2} (C_2 \times C_4) = C_4\circ D_4 \unlhd C_4\wr C_2$, and $C_4\circ D_4 < C_4\wr C_2$. Therefore we conclude that $(C_2 \times C_4)^{C_4\wr C_2} \cdot N_{C_4\wr C_2} (C_2 \times C_4) = C_4\circ D_4 < C_4\wr C_2$. Thus $C_2 \times C_4$ is an NC-subgroup of $G$, but it is not a NC-subgroup of $C_4\wr C_2$, which implies that NC-subgroup fails to satisfy {{\cite[Lemma 2.1(1)]{YL}}}.

(2) Now let $G = {\rm U}_2({\mathbb F}_3)$ again. One can find that there exists a NC-subgroup $C_2 \times C_4$ such that $N_G (C_2 \times C_4) = C_4 ^2 < C_4\wr C_2$. Since we have $N_G (C_4 ^2) = C_4\wr C_2$, it follows that $C_2 \times C_4 \unlhd \unlhd G$, and  
$C_2 \times C_4 \ntrianglelefteq C_4\wr C_2$. Thus we indicate that NC-subgroup fails to satisfy {{\cite[Lemma 2.1(2)]{YL}}}.

(3) We predicate that the condition of minimal prime $p$ in  proposition \ref{normal complement} can not be optimized. In other words, for a solvable PNC-group $G$, if $p$ is not the minimal number in $\pi (G)$, then $G$ is not necessarily $p$-nilpotent. For example, let $G = C_5\times S_3$. It is clear that $G$ is a solvable PNC-group. However, the subgroup $C_{10}$ is not normal in $G$. Hence we conclude that $G$ is not 3-nilpotent.
\end{remark}
\section{Inheritance of PNC-groups}\label{10003}
It is well known that the inheritance of PNC-groups plays an important role in the structure of PNC-groups. We can compare these properties with PN-groups and PE-groups, and see if there are some properties that they have in common.
\begin{proposition}\label{1}
Let $H,K$ be PNC-groups, $(|H|,|K|)=1$, then $H \times K$ is a PNC-group.
\end{proposition}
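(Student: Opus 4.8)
The plan is to reduce everything to the factors $H$ and $K$ by exploiting that coprimality forces every subgroup of $G=H\times K$ to split along the two direct factors. First I would establish the structural lemma: since $\gcd(|H|,|K|)=1$, every subgroup $L\le G$ satisfies $L=(L\cap H)\times(L\cap K)$ with $L\cap H\le H$ and $L\cap K\le K$. I would prove this via the projection homomorphisms $\pi_H$ and $\pi_K$ of $G$ onto its factors. Writing $a=|L\cap H|$, $b=|\pi_K(L)|$ (so that $|L|=ab$) and $c=|L\cap K|$, $d=|\pi_H(L)|$ (so that $|L|=cd$), one notes $a,d$ divide $|H|$ while $b,c$ divide $|K|$; the coprimality of $|H|$ and $|K|$ applied to $ab=cd$ forces $a=d$ and $b=c$, whence $\pi_H(L)=L\cap H$ and $\pi_K(L)=L\cap K$. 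Then $L\le \pi_H(L)\times\pi_K(L)=(L\cap H)\times(L\cap K)$, and an order count shows this containment is an equality.

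With the decomposition $L=L_1\times L_2$ in hand (where $L_1=L\cap H\le H$ and $L_2=L\cap K\le K$), the next step is a routine computation of the normalizer and normal closure inside a direct product. A pair $(h,k)$ normalizes $L$ precisely when $h$ normalizes $L_1$ and $k$ normalizes $L_2$, so $N_G(L)=N_H(L_1)\times N_K(L_2)$. Likewise, conjugating $L$ by $(h,k)$ yields $L_1^{\,h}\times L_2^{\,k}$, so the normal closure factors as $L^G=L_1^{\,H}\times L_2^{\,K}$.

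Combining these, I would compute
$$L^G N_G(L)=\bigl(L_1^{\,H}N_H(L_1)\bigr)\times\bigl(L_2^{\,K}N_K(L_2)\bigr).$$
Since $H$ is a PNC-group, $L_1$ is an NC-subgroup of $H$, giving $L_1^{\,H}N_H(L_1)=H$; symmetrically $L_2^{\,K}N_K(L_2)=K$ because $K$ is a PNC-group. Hence $L^G N_G(L)=H\times K=G$, so $L$ is an NC-subgroup of $G$. As $L$ was arbitrary, $G$ is a PNC-group.

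The only genuinely delicate point is the splitting lemma; once $L=L_1\times L_2$ is known, the direct-product identities for $N_G(L)$ and $L^G$ are formal, and the PNC-hypothesis on each factor closes the argument. The main obstacle is therefore to make the order-counting in the splitting step airtight, ensuring that coprimality really does rule out any ``diagonal'' subgroup of $H\times K$.
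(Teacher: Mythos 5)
Your proof is correct and follows essentially the same strategy as the paper's: both arguments hinge on showing that coprimality forces every subgroup $T \le H \times K$ to split as $(T\cap H)\times(T\cap K)$, and then factor the normal closure and normalizer along the two components so that the NC-property of each factor yields $T^G N_G(T)=G$. The only differences are cosmetic — you obtain the splitting via the projections $\pi_H,\pi_K$ and an order count where the paper invokes Schur--Zassenhaus together with an element-order contradiction, and you compute $N_G(L)=N_H(L_1)\times N_K(L_2)$ and $L^G=L_1^H\times L_2^K$ as exact equalities where the paper only needs the corresponding containments — so your version is, if anything, slightly more elementary at the splitting step.
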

\begin{proof}
Let $T$ be a subgroup of $G := H \times K$. It follows directly that 
$$\left(|T|/|T \cap H|,|T \cap H| \right) =1.$$
Schur-Zassenhaus Theorem shows that there exists $S \leq T$ such that $S$ is a complement of $H \cap T$ in $T$. We predicate that $(|H|,|T|)=|H \cap T|$. Otherwise $(|T|/|T \cap H|,|H|) \neq 1$, i.e. $(|S|,|H|) \neq 1$. Assume that $p \mid (|S|,|H|)$, then there exists $t \in S $ such that $o(t)=p$. Let $t =kh$, it follows that $t^p = k^p h^p =1$. Hence $1=k^p =h^p$. Since $(|H|,|K|)=1$, we have $k=1$, therefore $1 \neq t \in H$, a contradiction. Using the same method we conclude that $(|K|,|T|)=|K \cap T|$. As $|T| \mid |K||H|$, we indicate that
$$|K \cap T| \cdot |T \cap H| = |T|,~\text{i.e.}~\left( K \cap T \right)  \times \left( T \cap H \right) =T.$$
Hence we have $N_H (T \cap H) N_K (T \cap K) \leq N_G (T)$. Since $T^G \geq (T \cap H)^H \cdot (T \cap K)^K$, we conclude that 
\begin{align*}
T^GN_G (T)  &\geq  (T \cap H)^H \cdot (T \cap K)^K N_H (T \cap H) N_K (T \cap K)\\
& = \left( (T \cap H)^H N_H (T \cap H) \right) \left( (T \cap K)^K N_K (T \cap K) \right)\\
& =H \cdot K =G.
\end{align*}
Thus $T$ is an NC-subgroup of $G$. By the choice of $T$, $G$ is a PNC-group. 
\end{proof}

\begin{remark}
(1) Proposition \ref{1} is not necessarily true, if $(|H|,|K|) \neq 1$. Take $H = C_3,K =S_3$ for instance. Since there exists a subgroup $C_3$, such that $N_G (C_3) =C_3 \times C_3,\,C_3 ^G \leq C_3 \times C_3$. Thus proposition \ref{1} fails in this case. 

(2) Proposition \ref{1} is still not necessarily true, if we change $H \times K$ into $ H \ltimes K$. For example, let $H =C_3,K= D_4$. Then there exists a subgroup $S_3$, such that $N_G (S_3) =D_6, S_3 ^G \leq D_6$. Thus proposition \ref{1} fails in this case.

\end{remark}

\begin{proposition}\label{factor group}
Let $G$ be a PNC-group and $N \unlhd G$. Then $G/N$ is a PNC-group.
\end{proposition}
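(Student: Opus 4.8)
The plan is to use the correspondence theorem to reduce the claim to the fact that both the normal closure and the normalizer behave well under the quotient map. First I would observe that every subgroup of $G/N$ has the form $H/N$ for some subgroup $H$ with $N \le H \le G$, so it suffices to show that each such $H/N$ is an NC-subgroup of $G/N$, that is, that $(H/N)^{G/N} N_{G/N}(H/N) = G/N$.

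The two key identities I would establish are
$$(H/N)^{G/N} = H^G/N \qquad\text{and}\qquad N_{G/N}(H/N) = N_G(H)/N.$$
For the first, since $N \le H \le H^G$ and $H^G \unlhd G$, the subgroup $H^G/N$ is normal in $G/N$ and contains $H/N$; conversely, any normal subgroup of $G/N$ containing $H/N$ pulls back to a normal subgroup of $G$ that contains $H$ (and $N$), hence contains $H^G$, so $H^G/N$ is the smallest normal subgroup of $G/N$ containing $H/N$, i.e. its normal closure. For the second, a coset $gN$ normalizes $H/N$ precisely when $gHg^{-1}N = H$; because $N \le H$, this is equivalent to $gHg^{-1} \le H$, hence to $g \in N_G(H)$, which gives $N_{G/N}(H/N) = N_G(H)/N$.

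Finally, since $G$ is a PNC-group, $H$ is an NC-subgroup of $G$, so $H^G N_G(H) = G$. Both $H^G$ and $N_G(H)$ contain $N$, so passing to images under the quotient map yields
$$(H/N)^{G/N} N_{G/N}(H/N) = (H^G/N)(N_G(H)/N) = (H^G N_G(H))/N = G/N,$$
as required. As $H/N$ was an arbitrary subgroup of $G/N$, we conclude that $G/N$ is a PNC-group. No step here is genuinely delicate; the only point requiring care is the verification of the two quotient identities, and in particular noting that the hypothesis $N \le H$ is exactly what makes the normalizer computation clean.
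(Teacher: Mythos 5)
Your proof is correct and follows essentially the same route as the paper's: both establish the two identities $(H/N)^{G/N} = H^G/N$ and $N_{G/N}(H/N) = N_G(H)/N$ for subgroups $H$ with $N \le H \le G$, then multiply them against $H^G N_G(H) = G$. The only cosmetic difference is that you derive the identities via the correspondence theorem and the minimality characterization of the normal closure, whereas the paper verifies them by direct computation with elements and generators; note also that your step ``$gHg^{-1} \le H$ implies $g \in N_G(H)$'' implicitly uses finiteness, which is harmless here since all groups in the paper are finite.
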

\begin{proof}
For $T/N \leq G/N$, where $N \leq T$ is a subgroup of $G$. It follows directly that 
\begin{align*}
gN \in N_{G/N} (T/N) & \Leftrightarrow (T/N)^{gN} =T/N \Leftrightarrow T^g /N =T /N \Leftrightarrow T^g =T \Leftrightarrow g \in N_G (T).
\end{align*}
Thus we have $N_G (T) /N = N_{G/N} (T/N)$. Also, one can easily find that
\begin{align*}
(T/N)^{G/N} =\langle g^{-1} t gN  \mid g \in G,\,t \in T \rangle = T^G /N .
\end{align*}
Thus we indicate that $  T^G /N \cdot  N_G (T) /N  = G/N$, hence we get 
$$N_{G/N} (T/N) \cdot (T/N)^{G/N} = N_G (T) /N \cdot T^G /N =G/N. $$ 
By the choice of $T$, we conclude that $G/N$ is a PNC-group.
\end{proof}
\begin{remark}
For a group $G$, if for any normal subgroup $N>1$ of $G$, $G/N$ is a PNC-group, then $G$ is not necessarily a PNC-group. Let $G = D_4$. It is well known that $D_4$ is not a PNC-group while all of its non-trivial quotient groups are PNC-groups. Thus the proposition fails in this case.
\end{remark}
\begin{proposition}
Let $G$ be a PNC-group, and $M \unlhd G$. Then $M$ is a PNC-group.
\end{proposition}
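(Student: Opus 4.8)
The plan is to verify directly that an arbitrary subgroup $K \le M$ is an NC-subgroup of $M$, that is, that $K^M N_M(K) = M$, by transferring the analogous equation in $G$ down to $M$. The only nontrivial ingredient beyond the modular law will be Lemma \ref{T}, which tells us $G$ is a $\mathcal{T}$-group.

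First I would record two elementary facts that hold because $M$ is normal in $G$. Since $K \le M \unlhd G$, every $G$-conjugate $K^g$ lies in $M^g = M$, so the normal closure satisfies $K^G \le M$; and plainly $N_M(K) = N_G(K)\cap M$. Because $G$ is a PNC-group, $K$ is an NC-subgroup of $G$, so $K^G N_G(K) = G$. Intersecting this with $M$ and invoking Dedekind's modular law (legitimate precisely because $K^G \le M$) gives
\[
M = G \cap M = K^G N_G(K) \cap M = K^G\,(N_G(K)\cap M) = K^G N_M(K).
\]

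The remaining, and genuinely delicate, point is that this equation features $K^G$, the normal closure in $G$, whereas being an NC-subgroup of $M$ is phrased in terms of $K^M$, the a priori \emph{smaller} normal closure in $M$. The hard part is therefore to show $K^M = K^G$, and this is exactly where I would use Lemma \ref{T}. Since $K^M$ is normal in $M$ and $M$ is normal in $G$, the subgroup $K^M$ is subnormal in $G$; as $G$ is a PNC-group it is a $\mathcal{T}$-group, whence $K^M \unlhd G$. Now $K^M$ is a normal subgroup of $G$ containing $K$, so $K^G \le K^M$; the reverse inclusion $K^M \le K^G$ is immediate because $K^G \unlhd G$ forces $K^G \unlhd M$ while $K \le K^G$. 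Thus $K^M = K^G$.

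Combining the two displays, $M = K^G N_M(K) = K^M N_M(K)$, so $K$ is an NC-subgroup of $M$; as $K$ was arbitrary, $M$ is a PNC-group. I expect the only real obstacle to be the normal-closure identification $K^M = K^G$, which is the step that genuinely requires the $\mathcal{T}$-group property of $G$ (Lemma \ref{T}); the containment $K^G \le M$ and the Dedekind-law reduction are routine.
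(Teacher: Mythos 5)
Your proof is correct, and its first half coincides with the paper's: both note that $K^G \le M$ (since $M \unlhd G$) and apply Dedekind's modular law to the NC-equation $K^G N_G(K) = G$ to obtain $M = K^G\,(N_G(K) \cap M) = K^G N_M(K)$. Where you genuinely diverge is in the identification $K^M = K^G$. You derive it from Lemma \ref{T}: $K^M \unlhd M \unlhd G$ is subnormal, hence normal in $G$ because a PNC-group is a $\mathcal{T}$-group, and then minimality of $K^G$ together with $K^G \unlhd M$ yields both inclusions --- this is a valid argument. The paper instead extracts the same identity directly from the factorization $G = N_G(K) K^G$ itself: writing any $g \in G$ as $g = nh$ with $n \in N_G(K)$ and $h \in K^G$, one has $K^g = K^h$, whence $K^G = K^{K^G} \le K^M \le K^G$ (this is the step $U^{N_G(U) U^G} = U^{U^G}$ in the paper's displayed chain). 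The paper's route is more self-contained, using only the NC-property of $K$ with no prior structural result; your route imports Lemma \ref{T} (whose own proof likewise rests on the NC-equations) but in exchange isolates a clean, reusable principle --- in any $\mathcal{T}$-group, the normal closure of a subgroup of a normal subgroup $M$ computed in $M$ agrees with the one computed in $G$ --- and makes transparent exactly which global property of PNC-groups drives the inheritance.
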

\begin{proof}
Let $U$ be a subgroup of $M$. Since $U^G \leq M$, $U^{U^G} \leq U^M$ and $G =U^G N_G (U)$, it follows from Dedekind modular law that
$$M = M \cap G = (U^G N_G (U)) \cap M = U^G (M \cap N_G (U))=  U^{N_G (U) U^G} N_{M} (U)= U^{U^G} N_{M} (U) = U^{M} N_{M} (U).$$
By the choice of $U$, we indicate that $M$ is a PNC-group.
\end{proof}
\begin{remark}
If $G$ is a PNC-group, then its subgroups are not necessarily PNC-groups. Take $G=C_7 \times A_5$ for example, then there exists a subgroup $A_4$ which is not a PNC-group. Hence the argument fails in this case.
\end{remark}
\begin{theorem}\label{equivalent 2}
Let $G$ be a group. Suppose that there exists a normal subgroup $\langle x \rangle \leq Z(G)$ with $|\langle x \rangle|=p$, where $p$ is a prime, and $|G|_p = p$. Then $G$ is a PNC-group if and only if $G / \langle x \rangle$ is a PNC-group. 
\end{theorem}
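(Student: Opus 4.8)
We need to prove both directions of the equivalence. The forward direction is essentially immediate: we already know (Proposition \ref{factor group}) that quotients of PNC-groups are PNC-groups, so if $G$ is a PNC-group then $G/\langle x \rangle$ is automatically a PNC-group. All the real work lies in the converse. So the plan is to assume $G/\langle x \rangle$ is a PNC-group and show that every subgroup $H \le G$ satisfies $H^G N_G(H) = G$.

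**The converse: the main argument.** Write $Z = \langle x \rangle$, so $Z \le Z(G)$, $|Z| = p$, and $|G|_p = p$. Let $H \le G$ be arbitrary. The natural strategy is to push $H$ down to $\bar G := G/Z$ and lift the PNC-property back up. Consider $\bar H := HZ/Z$. Since $\bar G$ is a PNC-group, we have $\bar H^{\bar G} N_{\bar G}(\bar H) = \bar G$, which pulls back (using the correspondence $N_{\bar G}(\bar H) = N_G(HZ)/Z$ and $\bar H^{\bar G} = (HZ)^G/Z$, as in the proof of Proposition \ref{factor group}) to
\[
(HZ)^G \, N_G(HZ) = G.
\]
The task is then to convert this statement about $HZ$ into the desired statement $H^G N_G(H) = G$. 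Since $Z \le Z(G)$ is central, $Z$ normalizes $H$, so $N_G(HZ) \supseteq N_G(H)$, and $(HZ)^G = H^G Z$ (as $Z$ is normal). The crux is to handle the two cases depending on whether $Z \le H$ or not. If $Z \le H$, then $HZ = H$ and we are already done. If $Z \not\le H$, then because $|Z| = p$ is prime we have $H \cap Z = 1$, and I would analyze the coset structure: every element of $HZ$ has the form $hz$, and the condition $|G|_p = p$ forces $Z$ to be a Sylow $p$-subgroup, tightly controlling how the prime $p$ appears in $|H|$ and in the relevant indices.

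**Closing the gap.** The heart of the argument is showing $N_G(HZ) = N_G(H) Z$ and that this suffices to upgrade $(HZ)^G N_G(HZ) = G$ to $H^G N_G(H) = G$. For the normalizer identity: if $g$ normalizes $HZ$, one wants $g$ to normalize $H$ after adjusting by a central element; here the condition $H \cap Z = 1$ together with $|Z| = p$ should let me recover $H$ as a canonical complement or characteristic piece of $HZ$ (for instance, $H$ may be the unique $p'$-part or a Hall-type complement inside $HZ$ once we account for the single factor of $p$ contributed by $Z$). Once $N_G(HZ) = N_G(H)Z$ is established, substituting gives $H^G Z \cdot N_G(H) Z = G$; since $Z$ is central it merges into $N_G(H)$ when $p \mid |N_G(H)|$, or else one argues directly that the $Z$ factors are redundant modulo $H^G N_G(H)$, yielding $H^G N_G(H) = G$.

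**Main obstacle.** The delicate step is the case $Z \not\le H$: establishing the normalizer identity $N_G(HZ) = N_G(H)Z$ and showing the extra central factor $Z$ can be absorbed without loss. The hypothesis $|G|_p = p$ is exactly what should make this work, since it prevents $H$ from containing any nontrivial $p$-part that could be ``confused'' with $Z$, but verifying that $H$ is recoverable from $HZ$ in a $G$-equivariant way — so that conjugations normalizing $HZ$ descend to conjugations normalizing $H$ — is where I expect the argument to require genuine care rather than routine manipulation.
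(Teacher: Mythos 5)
Your proposal follows the same global route as the paper's proof: necessity via Proposition \ref{factor group}; for sufficiency, split on whether $\langle x\rangle\le H$, settle the case $\langle x\rangle\le H$ by the quotient correspondence, and in the remaining case reduce everything to the normalizer identity for $H\langle x\rangle$ versus $H$. The one step you leave open --- that an element normalizing $HZ$ (where $Z=\langle x\rangle$) must normalize $H$ --- is exactly the step the paper labors over, and here your mechanisms genuinely differ. The paper argues with elements: since $H\cap Z=1$ and $|G|_p=p$ force $p\nmid |H|$, for each $k\in H$ one can pick $t$ with $tp\equiv 1\pmod{o(k)}$, write $g^{-1}k^tg=k_1x^j$ with $k_1\in H$, and raise to the $p$-th power using $x\in Z(G)$ to get $g^{-1}kg=k_1^p\in H$, whence $g\in N_G(H)$. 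Your sketched alternative is sound and, once written out, is shorter: $Z\le Z(G)$ with $|G|_p=p$ makes $Z$ the unique Sylow $p$-subgroup of $G$, so $Z\not\le H$ gives $p\nmid|H|$ and $HZ=H\times Z$; hence $H=O_{p'}(HZ)$ is characteristic in $HZ$, so $N_G(HZ)\le N_G(H)$, while $N_G(H)\le N_G(HZ)$ because $Z\unlhd G$. This closes in two lines the step you flagged as requiring ``genuine care.'' Note also that your worry about absorbing the extra central factor is vacuous: $Z\le Z(G)\le N_G(H)$ always, so from $G=(HZ)^G\,N_G(HZ)=H^GZ\,N_G(H)$ one gets $G=H^GN_G(H)$ immediately, with no case distinction on whether $p$ divides $|N_G(H)|$.

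In short, the proposal is correct in outline and completable by your own suggested idea; as submitted it is an honest sketch at the crux rather than a proof. Comparing the two mechanisms: your characteristic-Hall-$p'$-part argument is cleaner and more conceptual, but it leans on centrality of $x$ just as essentially as the paper's computation does (without centrality, $HZ$ is merely a semidirect product and Schur--Zassenhaus gives conjugacy, not uniqueness, of complements). The paper's exponent computation has the advantage that its template adapts to the paper's Remark following the theorem, where $\langle x\rangle\le Z(G)$ is weakened to $(p-1,|G|/p)=1$ and a similar power trick still succeeds.
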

\begin{proof}
For necessity, we conclude from proposition \ref{factor group} that it is clear. For sufficiency, let $K$ be a subgroup of $G$. If $\langle x \rangle \leq K$, it follows from the same method in proposition \ref{factor group} that $N_G (K) / \langle x \rangle = N_{G/\langle x \rangle} (K/\langle x \rangle)$, $(K/\langle x \rangle)^{G/\langle x \rangle} = K^G / \langle x \rangle$. Since $G/N$ is a PNC-group, we conclude that $N_{G/\langle x \rangle} (K/\langle x \rangle) \cdot  (K/\langle x \rangle)^{G/\langle x \rangle} = G/ \langle x \rangle$. 
Hence we indicate that 
$$G/ \langle x \rangle = \left( K^G / \langle x \rangle \right) \cdot  \left( N_G (K) / \langle x \rangle  \right).$$
As $N_G (K),K^G \geq  \langle x \rangle$, we have $N_G (K) K^G =G$.

If $\langle x \rangle \not\leq K$, it follows that 
$$g \langle x \rangle \in N_{G/\langle x \rangle } (K \langle x \rangle /\langle x \rangle) \Leftrightarrow (K \langle x \rangle /\langle x \rangle)^{g \langle x \rangle } = K \langle x \rangle /\langle x \rangle \Leftrightarrow  K^g \langle x \rangle =K \langle x \rangle.$$
If $K^g \langle x \rangle =K \langle x \rangle$, we prove that $g \in N_G (K)$. By $\langle x \rangle \not\leq K$ we conclude that $K \cap \langle x \rangle =1$. Hence one can easily get that $(|K|,p)=1$. Thus  for $k \in K$, there exists $t \in \mathbb{N}$ such that $ tp \equiv 1 \,(\!\!\!\mod o(k))$. It follows from $K^g \langle x \rangle =K \langle x \rangle$ that $g^{-1} k^t g = k_1 x^j$, where $j$ is an integer and $k_1 \in K$. Therefore we indicate from $\langle x \rangle \leq Z(G)$ that 
$$(g^{-1} k^t g)^p = g^{-1} k g = k_1 ^p x^{jp} =k_1 ^p \in K.$$
Hence $g \in N_G (K)$ and we conclude that 
$$g \langle x \rangle \in N_{G/\langle x \rangle } (K \langle x \rangle /\langle x \rangle) \Leftrightarrow K^g \langle x \rangle =K \langle x \rangle \Leftrightarrow g \in N_G (K).$$
Thus we have $N_{G/\langle x \rangle } (K \langle x \rangle /\langle x \rangle) = N_G (K)/ \langle x \rangle$. It follows from $N_G (K \langle x \rangle) /\langle x \rangle$ that $N_G (K \langle x \rangle)=  N_G (K)$. Hence we indicate that 
\begin{align*}
G=N_G (K \langle x \rangle) \cdot  (K \langle x \rangle)^G = N_G (K \langle x \rangle) \cdot  \langle x \rangle K^G =\left(N_G (K \langle x \rangle)  \langle x \rangle \right) K^G =N_G (K) K^G
\end{align*}
Hence $K$ is an NC-subgroup of $G$. By the choice of $K$ we conclude that $G$ is a PNC-group.
\end{proof}
\begin{remark}
(1) If we change the condition $\langle x \rangle \leq Z(G)$ into $(p-1, |G|/p)=1$, then the result still holds. In fact, we only need to prove that for any $K \leq G$ such that $K \cap \langle x \rangle =1$, $N_G (K \langle x \rangle)=  N_G (K)$. In other words, $K^g \langle x \rangle =K \langle x \rangle \Leftrightarrow g \in N_G (K)$. If $g \in N_G (K)$, it is clear that  $K^g \langle x \rangle =K \langle x \rangle$. On the other hand, if $K^g \langle x \rangle =K \langle x \rangle$, we predicate that $g^{-1} k g \in K$ holds for any $k \in K$. In fact, It follows from $K^g \langle x \rangle =K \langle x \rangle$ that $g^{-1} k g = k_2 x^i$, where $i$ is an integer and $k_2 \in K$. If $x^{k_2} = x$, using the same method in proposition \ref{equivalent 2}, we indicate that $g^{-1} k g \in K$. If $x^{k_2} = x^t$, where $t \not\equiv 1 \,(\!\!\!\mod p)$, and $(t,p)=1$, it follows that 
$$g^{-1} k^{p-1} g = (k_2 x^i)^{p-1} = k_2 ^{p-1} x^{\frac{t^{p-1}-1}{t-1} i} .$$
Since $p \nmid t-1$, it follows from Fermat's Little Theorem that $p | \frac{t^{p-1}-1}{t-1} i$. Hence we conclude that $g^{-1} k^{p-1} g \in K$. Thus by $(p-1,o(k))=1$, we have $g^{-1} k g \in K$. By the choice of $k$, we indicate that $g \in N_G (K)$, and the result follows.

(2) Proposition \ref{equivalent 2} is not necessarily true if $|G|_p > p$. For example, take $G=C_5\times C_3\rtimes D_4$ with generators $G = \langle a,b,c,d ~|~ a^5=b^3=c^4=d^2=1, ab=ba, ac=ca, ad=da, cbc^{-1}=dbd=b^{-1}, dcd=c^{-1} \rangle$. Then there exists a subgroup $C_2 \leq Z(G) = C_{10}$, such that $G/C_2 = S_3 \times C_{10}$, which is a PNC-group. However, one can find that $G$ is not a PNC-group. Thus proposition \ref{equivalent 2} fails in this condition.
\end{remark}
\begin{proposition}\label{Inheritance of NC-subgroup}
Let $G$ be a finite group and $N$ be a normal subgroup of $G$. If $N \leq K$, then $K$ is an NC-subgroup of $G$ if and only if $K/N$ is an NC-group of $G/N$.
\end{proposition}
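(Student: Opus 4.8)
The plan is to reduce the statement to the two correspondence identities already established inside the proof of Proposition~\ref{factor group}, where precisely the configuration $N \unlhd G$ with $N \le T$ was handled. First I would record the structural observation that makes quotients legitimate: since $K \le N_G(K)$ and $K \le K^G$ while $N \le K$, both $N_G(K)$ and $K^G$ are subgroups of $G$ that contain the normal subgroup $N$. Consequently $N_G(K)/N$ and $K^G/N$ are well-defined subgroups of $G/N$. This is the one place where the hypothesis $N \le K$ is genuinely used, and it is what allows the entire argument to take place inside $G/N$.

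Next I would invoke the two identities
\[
N_{G/N}(K/N) = N_G(K)/N \qquad\text{and}\qquad (K/N)^{G/N} = K^G/N,
\]
both of which are verified verbatim in the proof of Proposition~\ref{factor group} upon taking $T = K$. The first follows from the chain of equivalences $gN \in N_{G/N}(K/N) \Leftrightarrow (K/N)^{gN} = K/N \Leftrightarrow K^g = K \Leftrightarrow g \in N_G(K)$, and the second from computing the normal closure generator-wise modulo $N$, namely $(K/N)^{G/N} = \langle g^{-1}tg\,N \mid g \in G,\, t \in K\rangle = K^G/N$.

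Finally I would transfer the NC-condition across the quotient using the elementary fact that, for subgroups $A,B$ of $G$ each containing the normal subgroup $N$, the product $AB$ is a union of cosets of $N$, so that $(A/N)(B/N) = (AB)/N$ as subsets of $G/N$, and hence $AB = G \iff (A/N)(B/N) = G/N$. Taking $A = K^G$ and $B = N_G(K)$ and substituting the two identities above yields
\[
K^G N_G(K) = G \iff (K/N)^{G/N}\, N_{G/N}(K/N) = G/N,
\]
which is exactly the claim that $K$ is an NC-subgroup of $G$ if and only if $K/N$ is an NC-subgroup of $G/N$. I do not expect a genuine obstacle here, since the work has effectively been done in Proposition~\ref{factor group}; the only subtlety is confirming that both $K^G$ and $N_G(K)$ contain $N$, so that forming quotients and taking products commute, and this is precisely what $N \le K$ guarantees.
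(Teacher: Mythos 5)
Your proposal is correct and follows essentially the same route as the paper: both rest on the identities $N_{G/N}(K/N)=N_G(K)/N$ and $(K/N)^{G/N}=K^G/N$ (established in the proof of Proposition~\ref{factor group}) and then transfer the product condition $K^G N_G(K)=G$ across the quotient. Your added remarks---that $N\le K\le N_G(K)\cap K^G$ makes both quotients well-defined, and that $AB=G \Leftrightarrow (A/N)(B/N)=G/N$ for subgroups $A,B\geq N$---merely make explicit steps the paper leaves implicit.
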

\begin{proof}
Since $N \leq K$, It follows from $N_G (K) /N = N_{G/N} (K/N)$ and $(K/N)^{G/N} =K^G /N$ that 
$$G/N =  N_{G/N} (K/N) (K/N)^{G/N} \!\!\!\Leftrightarrow\! G/N = \left( N_G (K) /N \right)\left( K^G /N \right)\!\Leftrightarrow\! G/N = N_G (K) K^G /N\!\! \Leftrightarrow \!G = N_G (K) K^G.$$
Thus $K$ is an NC-subgroup of $G$ if and only if $K/N$ is an NC-group of $G/N$, and the proof is completed.
\end{proof}
\begin{remark}
Proposition \ref{Inheritance of NC-subgroup} is not necessarily true if $K \not\leq N$. Take $G = D_4 $ for instance. It is obvious that $C_2$ is not an NC-subgroup of $G$ since $N_G (C_2) = C_2 ^G = C_2 ^2$. Now let $N = \left( C_2 '\right)^2  \unlhd G$, where $\left( C_2 ' \right)^2$ and $C_2 ^2$ are two distinct normal subgroups of $G$. It follows directly that $\left( C_2 ' \right)^2 \cap C_2 =1$. Hence we have $\overline{C_2} = C_2 \cdot N /N = G/N$, which implies that $\overline{C_2}$ is an NC-subgroup of $G/N$. Thus proposition \ref{Inheritance of NC-subgroup} fails in this condition.
\end{remark}
\begin{proposition}\label{Direct product}
Let $G = K \times T$, where $H \leq K $. Then $H$ is an NC-subgroup of $G$ if and only if $H$ is an NC-subgroup of $K$.
\end{proposition}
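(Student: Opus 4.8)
The plan is to reduce both the normal closure and the normalizer of $H$ in $G$ to data lying inside $K$ together with the whole factor $T$, and then simply multiply the two. The guiding observation is that in a direct product $G = K \times T$ the factor $T$ centralizes $K$, so conjugation by elements of $T$ has no effect on subgroups of $K$, while $T$ automatically normalizes every such subgroup.

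First I would compute the normal closure. Writing an arbitrary element of $G$ as $g = kt$ with $k \in K$ and $t \in T$, and using $H \le K$, for $h \in H$ we have $h^{g} = h^{kt} = (h^{k})^{t} = h^{k}$, since $t$ centralizes the element $h^{k} \in K$. Consequently $H^{G} = H^{K}$; in particular $H^{G} \le K$. Next I would compute the normalizer: for the same $g = kt$, the equation $H^{g} = H$ is equivalent to $H^{k} = H$, so $g \in N_G(H)$ if and only if $k \in N_K(H)$. This gives $N_G(H) = N_K(H)\,T$, and since $K \cap T = 1$ this is the internal direct factorization $N_G(H) = N_K(H) \times T$.

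Combining the two computations, and using that $T \le N_G(H)$ while both $H^{K}$ and $N_K(H)$ lie in $K$, I obtain
\[
H^{G} N_G(H) = \bigl(H^{K} N_K(H)\bigr)\, T = \bigl(H^{K} N_K(H)\bigr) \times T .
\]
Since $G = K \times T$, the product $H^{G} N_G(H)$ equals $G$ if and only if $H^{K} N_K(H) = K$, which is exactly the statement that $H$ is an NC-subgroup of $K$. This yields both implications simultaneously. I do not expect a genuine obstacle here; the only step requiring care is the verification that conjugation by $T$ acts trivially on $H$, so that $H^{G}$ collapses to $H^{K}$ rather than acquiring contributions from $T$, and dually that $T$ is absorbed entirely into $N_G(H)$.
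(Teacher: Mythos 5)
Your proof is correct and takes essentially the same route as the paper's: you compute $H^G = H^K$ and $N_G(H) = N_K(H)\,T$ by exactly the same observations (conjugation by $T$ is trivial on subgroups of $K$), and then read off the equivalence $H^G N_G(H) = G \Leftrightarrow H^K N_K(H) = K$. The only difference is cosmetic, namely that you spell out the internal direct-product structure $N_G(H) = N_K(H) \times T$ and justify the final step via $K \cap T = 1$, which the paper leaves implicit.
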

\begin{proof}
It follows directly that $H^G =( H^K)^T = H^K$. Let $k \in K$ and $t \in T$. A trivial argument shows that 
$$kt \in N_G (H) \Leftrightarrow H^{kt}= H \Leftrightarrow H^k =H \Leftrightarrow  k \in N_K (H). $$
Hence we have $ N_G (H) = T N_K (H)$. It follows directly that 
$$H^G N_G (H) =G \Leftrightarrow  H^K T N_K (H) =G \Leftrightarrow H^K N_K (H) =K .$$
Thus the proof is completed.
\end{proof}
\begin{remark}
Proposition \ref{Direct product} is not necessarily true if we change the condition of direct product into semi-direct product. Let $G = D_4\rtimes S_3$ with generators $G = \langle a,b,c,d ~|~ a^4=b^2=c^3=d^2=1, bab=dad=a^{-1}, ac=ca, bc=cb, dbd=ab, dcd=c^{-1} \rangle$ for example. Then there exists a subgroup $C_2 ^2 \unlhd D_4 \unlhd G$. Hence $C_2 ^2$ is an NC-subgroup of $D_4$. However, it follows from $N_G (C_2 ^2) = C_3 \times D_4$ and $(C_2 ^2)^G =D_4$ that $C_2 ^2$ is not an NC-subgroup of $G$. Therefore proposition \ref{Direct product} fails in this condition.
\end{remark}

\section{Sufficient conditions for solvable PNC-groups}\label{10004}
In this section, we do some computations on certain groups, and obtain some sufficient conditions for a group $G$ to be a solvable PNC-group. In particular, some conditions are even equivalent, and so give us an insight into the structure of solvable PNC-groups.
\begin{proposition}\label{dihedral group}
Let $G$ be a dihedral group, then the following hold:
\begin{itemize}
\item[(1)] All maximal subgroups of $D_n$ are exactly $D_{\frac{n}{p}}$, $p \in \pi(n)$ and $C_n$.
\item[(2)] $D_n $ is a PNC-group if and only if $4 \nmid n$.
\end{itemize}
\end{proposition}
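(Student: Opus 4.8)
The plan is to fix the presentation $D_n = \langle a,b \mid a^n = b^2 = 1,\ bab^{-1}=a^{-1}\rangle$, so that $\langle a\rangle \cong C_n$ is the rotation subgroup of index $2$ and every element outside $\langle a\rangle$ is a reflection $a^j b$ of order $2$. For part (1), I would take a maximal subgroup $M$ and split on whether $\langle a\rangle \le M$. If $\langle a\rangle \le M$, then since $[D_n:\langle a\rangle]=2$ and $M$ is proper we get $M = \langle a\rangle = C_n$. If $\langle a\rangle \not\le M$, then $M \cap \langle a\rangle = \langle a^d\rangle$ for some $d \mid n$ with $d>1$, and $M$ contains a reflection, so $M = \langle a^d, a^i b\rangle$ is dihedral of order $2n/d$. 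Maximality forces $d$ prime: if $d = pe$ with $p$ prime and $e>1$, then $\langle a^d\rangle < \langle a^p\rangle$ yields $M < \langle a^p, a^i b\rangle < D_n$, a contradiction. Hence $M \cong D_{n/p}$ for some $p \in \pi(n)$, which proves (1).

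For the necessity in (2), suppose $4 \mid n$ and take $H = \langle b\rangle$. The conjugates $a^k b a^{-k} = a^{2k} b$ show $H^G = \langle a^2, b\rangle$ has order $n$, while a direct check gives $N_G(H) = \{1, a^{n/2}, b, a^{n/2} b\}$. Since $4 \mid n$, the exponent $n/2$ is even, so $a^{n/2} \in \langle a^2\rangle \le H^G$; thus $N_G(H) \le H^G$ and $H^G N_G(H) = H^G \ne G$. Hence $H$ is not an NC-subgroup and $D_n$ is not a PNC-group. (Equivalently, this $H$ is subnormal but not normal, so $D_n$ is not a $\mathcal{T}$-group, contradicting Lemma \ref{T}.)

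For the sufficiency, assume $4 \nmid n$; I would check that every subgroup $H$ is an NC-subgroup by sorting subgroups into the cyclic ones $\langle a^d\rangle \le \langle a\rangle$ and the dihedral ones $H = \langle a^d, a^i b\rangle$ (the order-$2$ reflection subgroups being the case $d=n$). Each $\langle a^d\rangle$ is the unique subgroup of its order in $\langle a\rangle$, hence characteristic there and normal in $G$, so it is automatically an NC-subgroup. For a dihedral $H$, conjugating by powers of $a$ (using $a(a^i b)a^{-1}=a^{i+2}b$) shows the reflections of $H^G$ have exponents in the coset $i + \gcd(2,d)\mathbb{Z} \pmod n$. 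If $\gcd(2,d)=1$ (which holds whenever $n$ is odd, and whenever $d$ is odd), then $H^G$ contains every reflection, so $H^G = G$ and $H^G N_G(H) = G$ trivially. The remaining case is $n = 2m$ with $m$ odd and $d$ even; then $H^G = \langle a^2, a^i b\rangle$ has index $2$, and the central involution $a^{m}$ centralizes $H$ (since $a^m (a^i b) a^{-m} = a^{i+2m} b = a^i b$) yet lies outside $H^G$ because $m$ is odd. Therefore $a^m \in N_G(H)$ and $H^G N_G(H) \supseteq \langle H^G, a^m\rangle = G$, completing the verification.

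The easy direction is necessity, which is a single explicit witness $H=\langle b\rangle$. The main work is the sufficiency, and the delicate point is the case $n \equiv 2 \pmod 4$ with $d$ even, where $H^G$ is a proper (index-$2$) subgroup and one must exhibit an element of $N_G(H)$ lying outside $H^G$. The parity bookkeeping separating $n$ odd, $n \equiv 2 \pmod 4$, and the parity of $d$ is exactly where the hypothesis $4 \nmid n$ enters: it is precisely $m = n/2$ being odd that keeps the centralizing rotation $a^m$ out of $H^G$, so that $H^G N_G(H)$ fills out all of $G$.
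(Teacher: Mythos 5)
Your proof is correct and follows essentially the same route as the paper: classify subgroups of $D_n$ as rotation subgroups $\langle a^d\rangle$ (normal, hence trivially NC) versus dihedral subgroups $\langle a^d, a^ib\rangle$, compute $H^G$ via the parity of reflection exponents, and treat the case $n\equiv 2 \pmod 4$, $d$ even by producing a normalizing element outside the index-$2$ subgroup $H^G$. The two points where you differ are both harmless simplifications: for necessity you use the witness $\langle b\rangle$ directly (the paper instead uses $D_{n/4}\unlhd D_{n/2}\unlhd D_n$), and for the delicate sufficiency case you use the central involution $a^{n/2}$, whereas the paper exhibits the reflection $ah^{t/2}\in N_G(H)\setminus H^G$; your choice avoids any computation beyond centrality of $a^{n/2}$.

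One aside should be deleted or corrected: your parenthetical claim that the witness $H=\langle b\rangle$ is ``subnormal but not normal'' is false in general. For example, in $D_{12}$ the normal closure series of $\langle b\rangle$ is
$$\langle b\rangle^{G}=\langle a^2,b\rangle,\qquad \langle b\rangle^{\langle a^2,b\rangle}=\langle a^4,b\rangle\cong D_3,$$
and it stabilizes at $\langle a^4,b\rangle\neq\langle b\rangle$, so $\langle b\rangle$ is not subnormal in $D_{12}$; subnormality of $\langle b\rangle$ holds only when $n$ is a power of $2$ (the paper's witness $D_{n/4}$, by contrast, genuinely is subnormal but not normal, which is why the paper's necessity argument is compatible with its Lemma \ref{T}). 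Since your direct computation $N_G(\langle b\rangle)=\{1,a^{n/2},b,a^{n/2}b\}\leq\langle a^2,b\rangle=\langle b\rangle^G$ is complete on its own, the parenthetical is not load-bearing, but as stated the ``equivalently'' is wrong.
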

\begin{proof}
(1) Let $D_n = \langle a,b \,|\, a^n = b^2=1,\,bab = a^{-1} \rangle$, and $U$ be a maximal subgroup of $G$. If $U \leq \langle a \rangle \unlhd D_n$, it follows that $U = \langle a \rangle$. If $U$ is not a subgroup of $\langle a \rangle$, it follows from $\langle a \rangle \unlhd D_n$ that $U \langle a \rangle = D_n$. By Lagrange Theorem we have 
$$\frac{|\langle a \rangle||U|}{|\langle a \rangle \cap U|} = \frac{|D_n||U|}{2|\langle a \rangle \cap U|}= |D_n|.$$
Hence $2|\langle a \rangle \cap U| = |U|$. Let $\langle a^r \rangle =\langle a \rangle \cap U$, where $(r,n) \neq  1$. Without the loss of generality, we may assume that $p | (r,n)$, where $p$ is a prime. Now let $g \in U \setminus \langle a \rangle$. Since $D_n = \langle a \rangle \rtimes \langle b \rangle$, there exists $i \in \mathbb{N}$ such that $g = a^i b$. It follows from $g^2 = a^i b a^i b = a^i a^{-i}=1$ that $U = \langle a^r \rangle \rtimes \langle a^i b \rangle$. Since $U \leq \langle a^p \rangle \cdot \langle a^i b \rangle = H$, and $  \langle a^p \rangle \cap \langle a^i b \rangle =1$, again by Lagrange Theorem we have 
$$|H| = \frac{|\langle a^p \rangle||\langle a^i b \rangle|}{|\langle a^p \rangle \cap \langle a^i b \rangle |} =  \frac{2|\langle a \rangle|}{p} = \frac{|D_n|}{p}.$$
Therefore dihedral group $H \cong D_{\frac{n}{p}}$ is a maximal subgroup of $D_n$, which forces $H = U \cong D_{\frac{n}{p}}$.

(2)Let $D_n = \langle a,h \,|\, h^n = a^2=1,\,aha = h^{-1} \rangle$.  Suppose that $D_n$ is a PNC-group with $4|n$, it follows that 
$$\langle a \rangle \ltimes \langle h^4 \rangle = D_{\frac{n}{4}}  \unlhd \langle a \rangle \ltimes \langle h^2 \rangle =D_{\frac{n}{2}} \unlhd \langle a \rangle \ltimes \langle h \rangle = D_n.$$
Hence $D_{\frac{n}{4}} ^{D_n} \leq D_{\frac{n}{2}}$. For $ a h^t \in D_n,\,t,k \in \mathbb{N}$, we have 
$$h^{-t} a \cdot a h^{4k} \cdot a h^{t} = h^{4k-t} a h^{t} =h^{4k-2t} a =a h^{2t-4k}.$$ 
Let $t$ be an odd number, then we conclude that $(a h^{4k} )^{a h^t} \notin D_{\frac{n}{4}}$. Thus $D_{\frac{n}{4}} \ntrianglelefteq D_n$, which implies that $N_{D_n} (D_{\frac{n}{4}}) \leq  D_{\frac{n}{2}}$. Therefore we indicate that $N_{D_n} (D_{\frac{n}{4}}) D_{\frac{n}{4}} ^{D_n} \leq D_{\frac{n}{2}}$,
A contradiction to the fact that $D_{\frac{n}{4}}$ is an NC-subgroup of $D_n$. Thus we conclude that $4 \nmid n$.

Assume that $4 \nmid n$. For any subgroup  $H$ of $D_n$, either there exists an integer $t$ such that $H = \langle h^t \rangle$, or $H = \langle ah^j \rangle \ltimes \langle h^t \rangle,\,j \in \mathbb{N}$. For the first case, one can easily find that $\langle h^t \rangle \unlhd D_n$. For the second case, without the loss of generality, we may assume that $j=0$, i.e. $H = \langle a \rangle \ltimes \langle h^t \rangle$. Since for $k,s \in \mathbb{N}$, $h^{-k} a \cdot a h^{ts} \cdot a h^k = h^{ts -2k} a =a h^{2k-ts} ,\,h^{-k} \cdot a h^{ts} \cdot h^k =a h^{2k+ts}$, we indicate that
$$\left( \langle a \rangle \ltimes \langle h^t \rangle \right)^{D_n} =\langle a \rangle \ltimes \left( \langle h^t \rangle \times \langle h^2 \rangle \right).$$
If $(2,t)=1$ or $(2,n)=1$, then we conclude that 
$$\left( \langle a \rangle \ltimes \langle h^t \rangle \right)^{D_n} =\langle a \rangle \ltimes \left( \langle h^t \rangle \times \langle h^2 \rangle \right)=D_n.$$
Hence $H$ is a NC-subgroup of $D_n$. If $(2,t) \neq 1$ and $2 \parallel n$, without the loss of generality we may assume that $(2,t)=2$. Then we conclude that $\left( \langle a \rangle \ltimes \langle h^t \rangle \right)^{D_n} =D_{\frac{n}{2}}$. Let $k = \frac{t}{2}$, it follows that $h^{-k} a \cdot a h^{ts} a h^k =a h^{2k-ts} =a h^{t(1-s)},\,s \in \mathbb{N}$. Hence we have $a h^{\frac{t}{2}} \in N_{D_n} \left( \langle a \rangle \ltimes \langle h^t \rangle \right)$, which implies that 
$$N_G (H) H^G = N_{D_n} \left( \langle a \rangle \ltimes \langle h^t \rangle \right) \left( \langle a \rangle \ltimes \langle h^t \rangle \right)^{D_n} =G$$ 
Thus $H$ is an NC-subgroup of $D_n$. By the choice of $H$, we indicate that $D_n$ is a PNC-group. 
\end{proof}

\begin{proposition}
Let $G = \langle a,b: a^{2} = b^{n},\,o(a)= 4,\,o(b) =2n,\,a^{-1} b a = b^{-1} \rangle$, where $n \in \mathbb{N^{*}}$. Then $G$ is a PNC-group if and only if $4 \nmid n$.
\end{proposition}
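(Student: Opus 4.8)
The plan is to recognize $G$ as the dicyclic group of order $4n$ and to mirror the strategy of Proposition \ref{dihedral group}. First I would record the structural facts: since $a^{-1}ba = b^{-1}$, the cyclic subgroup $\langle b \rangle$ of order $2n$ is normal of index $2$ in $G$, and $a^2 = b^n$ is the unique involution of $G$. A direct computation gives $(ab^i)^2 = a^2 = b^n$ for every $i$, so every element outside $\langle b \rangle$ has order $4$. Consequently the subgroups split into two families: the subgroups of $\langle b \rangle$, namely $\langle b^d \rangle$ with $d \mid 2n$, which are characteristic in $\langle b \rangle$ and hence normal in $G$; and the subgroups $H \not\le \langle b \rangle$, which satisfy $|H : H \cap \langle b \rangle| = 2$ and therefore have the form $H = \langle b^t, ab^i \rangle$ with $H \cap \langle b \rangle = \langle b^t \rangle$, where $b^n = (ab^i)^2 \in \langle b^t \rangle$ forces $t \mid n$. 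The normal subgroups are automatically NC-subgroups, so the whole question reduces to the second family.

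The core of the argument is two conjugation computations. Using $b^{-k} a b^k = ab^{2k}$ and $a^{-1}(ab^i)a = ab^{-i}$, I would show that the $G$-conjugacy class of $ab^i$ is exactly $\{ab^j : j \equiv i \pmod 2\}$; since the product of two such elements recovers $b^2$, this gives $\langle ab^j : j \equiv i \pmod 2 \rangle = \langle b^2, ab^i \rangle$. Hence $H^G = \langle b^{\gcd(t,2)}, ab^i \rangle$, which equals $G$ when $t$ is odd and equals the index-$2$ subgroup $\langle b^2, ab^i \rangle$ when $t$ is even. For the normalizer, the relations $b^{-k}(ab^i)b^k = ab^{i+2k}$ and $(ab^j)^{-1}(ab^i)(ab^j) = ab^{2j-i}$ show that $b^k \in N_G(H)$ iff $t \mid 2k$ and $ab^j \in N_G(H)$ iff $t \mid 2(j-i)$. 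When $t$ is even this yields $N_G(H) = \langle b^{t/2}, ab^i \rangle$, while $N_G(H) = H$ when $t$ is odd.

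Putting these together, when $t$ is odd one has $H^G = G$, so $H$ is an NC-subgroup; when $t$ is even, $N_G(H) H^G = \langle b^{\gcd(2,\,t/2)}, ab^i \rangle$, which equals $G$ precisely when $t/2$ is odd. Thus $H$ fails to be an NC-subgroup exactly when $4 \mid t$. Since a subgroup with $4 \mid t$ and $t \mid n$ exists if and only if $4 \mid n$ (take $t = 4$), this yields the claim: if $4 \mid n$ then $H = \langle b^4, a \rangle$ is not an NC-subgroup, so $G$ is not a PNC-group, whereas if $4 \nmid n$ then every subgroup in both families is an NC-subgroup, so $G$ is a PNC-group. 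I expect the main obstacle to be the careful bookkeeping of the divisibility conditions governing $H^G$ and $N_G(H)$ — in particular verifying that the product $N_G(H) H^G$ collapses to $\langle b^{\gcd(2,\,t/2)}, ab^i \rangle$ and correctly tracking the parity of $t/2$ — rather than any conceptual difficulty, since the subgroup classification becomes routine once the unique-involution structure is in hand.
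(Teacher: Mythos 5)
Your proposal is correct, and it takes a genuinely different route from the paper's own proof. The paper argues by a three-way case split on $n \bmod 4$: for $4 \mid n$ it exhibits the single counterexample $U = \langle ab^2 \rangle$ (in your parametrization this is the case $t = n$ with $4 \mid t$) and shows $N_G(U)\,U^G \le \langle a \rangle \langle b^2 \rangle < G$; for $n$ odd it writes $G = \langle b^2 \rangle \rtimes \langle a \rangle$ and invokes the general sufficiency criterion of Theorem~\ref{sufficiency}, since $a$ inverts $\langle b^2 \rangle$ and $\gcd(-1-1,\,n) = 1$; and for $2 \parallel n$ it reduces an arbitrary $U \not\le \langle b \rangle$ to the cyclic piece $\langle ab^j \rangle$ and does an ad hoc computation with the element $b^{3n/2}$. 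You instead classify all subgroups uniformly by the parameter $t$ with $H \cap \langle b \rangle = \langle b^t \rangle$, where $b^n = (ab^i)^2 \in \langle b^t \rangle$ forces $t \mid n$, then compute $H^G = \langle b^{\gcd(t,2)}, ab^i \rangle$ and $N_G(H)$ in closed form and isolate the exact failure criterion $4 \mid t$; your conjugation formulas and divisibility bookkeeping check out (note that $t$ even can only occur when $n$ is even, which keeps your index-$2$ subgroup $\langle b^2, ab^i \rangle$ well defined), and since $t \mid n$, a failing subgroup exists iff $4 \mid n$, e.g.\ $\langle b^4, a \rangle$. What your approach buys is uniformity: one computation covers all three of the paper's cases, avoids the appeal to Theorem~\ref{sufficiency}, and identifies precisely which subgroups fail to be NC-subgroups, whereas the paper's route saves computation in the odd case by leveraging its general theorem, at the cost of a case split whose $2 \parallel n$ branch is the least transparent part of the printed proof. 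One cosmetic slip to fix: the product of two class elements is $ab^i \cdot ab^{j} = b^{n+j-i}$, so for $n$ odd it does not directly ``recover $b^2$''; you obtain $b^2$ only after multiplying by $b^{-n} = (ab^i)^{-2}$, which lies in the subgroup you are generating, so the identity $\langle ab^j : j \equiv i \pmod 2 \rangle = \langle b^2, ab^i \rangle$ stands and nothing downstream breaks.
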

\begin{proof}
For $i,j \in \mathbb{N}$, it is easy to find that:
$$\left( a^{i} \right) ^{b^{j}}  = a^i b^{j((-1)^{i+1} +1)}.$$ 
If $4 | n$, let $U= \langle ab^2 \rangle$. For $u,v \in \mathbb{N}$, one can easily find that 
$$(a b^2)^{a^u b^v} = (a b^{2 \cdot (-1)^{u}})^{b^v} = a b^{2 \cdot (-1)^{u} + 2v},\,(a^{-1} b^2)^{a^u b^v} = a^{-1} b^{2 \cdot (-1)^{u} + 2v}. $$
For $a^{i} b ^{2x},a^j b ^{2y}$, where $i,j,x,y$ are integers, it follows that 
$$a^{i} b ^{2x} a^j b ^{2y}  = a^{i+j} b ^{2x \cdot (-1)^{j} +2y}.$$
Thus we indicate that $U^G = \langle a \rangle \cdot \langle b^2 \rangle$. Now let $a^i b^j \in N_G (U)$, then we have
\begin{align*}
(a b^2)^{a^i b^j} = a b^{2 \cdot (-1)^{i} + 2j} = a b^2 ~\text{or}~  (a b^2)^{a^i b^j} = a b^{2 \cdot (-1)^{i} + 2j} = a^{-1} b^2 .
\end{align*}
The first case implies that $2|j$, the second case indicates that $2 |j$ as well. Hence we conclude that $N_G (U) \leq \langle a \rangle \cdot \langle b^2 \rangle$ Thus $N_G (U) U^G \leq \langle a \rangle \cdot \langle b^2 \rangle$, i.e. $G$ is not a PNC-group. 

\!\!\!\!\!\!\!\!\!\!\!\!\!\! If $n$ is odd, it follows that $G = \langle b^2 \rangle \rtimes \langle a \rangle$. Clearly $a$ induces a power automorphism on $\langle b^2 \rangle$. For $i \in \mathbb{N}$ where $i|n$, we have 
$$(b^{2i})^{a^j} = b^{2i \cdot (-1)^{j}} = b^{2i} ~\text{or}~ (b^{2i})^{\frac{n-i}{i}} .$$
The second case suggests that $(\frac{n-i}{i}-1, \frac{n}{i})=1$, hence we conclude from theorem \ref{sufficiency} that $G$ is a solvable PNC-group.

\!\!\!\!\!\!\!\!\!\!\!\!\!\! For $2 || n$, the case of $U \leq \langle b \rangle$ is clear. Suppose that $U \not\leq \langle b \rangle$, then Lagrange Theorem shows that $U = \langle b^k \rangle \cdot \langle a b^j \rangle$ for some $j \in \mathbb{N}$, where $v_2 (j) \leq 1$. Since $N_G (\langle a b^j \rangle) \leq N_G (\langle U \rangle)$ and $(\langle a b^j \rangle)^G \leq U^G$, Therefore if suffices to prove that $\langle a b^j \rangle$ is an NC-subgroup of $G$. If $2|j$, since $(a b^{j})^b = a b^{j+2}$, one can easily get that $\langle b^2 \rangle \leq \langle a b^j \rangle ^G$. Hence $\langle a \rangle \leq \langle a b^j \rangle ^G$. Now we conclude that 
$$(a b^j)^{ b^{1.5n}} = a b^{j+ 3n} = a^{-1} b^j. $$
Hence $b^{1.5n} \in N_G (\langle a b^j \rangle)$. Since $1.5n$ is odd, we indicate that $\langle b \rangle \leq N_G (\langle a b^j \rangle) \langle a b^j \rangle ^G$.  It follows that $\langle a b^j \rangle ^G \geq \langle b \rangle \cdot \langle a \rangle = G$. Thus $\langle a b^j \rangle$ is an NC-subgroup of $G$, and the proof is completed. 
\end{proof}

\begin{question}
Is $S_n$ a PNC-group for any $n \in \mathbb{N}$ except for $n=4$?
\end{question}

Now we will study a few kinds of solvable groups which are similar to those in the third section. Actually we predicate some necessary and sufficient conditions for several kinds of solvable groups. 

In this section, we may assume that $G = A \rtimes \langle a \rangle$ where $o(a) =p^{\alpha}$, $A = C_{p_1 ^{\alpha _1}} \times C_{p_2 ^{\alpha _2}} \times \cdots \times C_{p_n ^{\alpha _n}}$ with $p,p_1,\cdots,p_n$ being distinct prime numbers. One can easily find that every subgroup of $A$ is normal in $G$. Now, we consider the action induced by $a$ on $C_{p_i ^{\alpha _i}},\,i=1,2,\cdots,n$. Let $C_{p_i ^{\alpha _i}} = \langle a_i \rangle$, where $o(a_i) = p_i ^{\alpha _i},\,i=1,2,\cdots,n$. It follows that $a$ induces a power automorphism on $A$, and we have
$$({a_i ^{-1}} )^{a} = a_i ^{t_i} = a^{-1} a_i ^{-1} a,\,t_i \in \mathbb{N}, t_i \leq p_i ^{\alpha_i }-1,\,(t_i,p_i)=1.$$

\begin{lemma}
Let $s,k_1,k_2,\cdots,k_n \in \mathbb{N}$, then we have
$$(a^s)^{a_1 ^{k_1} a_2 ^{k_2} \cdots a_n ^{k_n}} = a^s (a_n ^{k_n})^{1-(-t_n)^s} \cdots (a_2 ^{k_2})^{1-(-t_2)^s} (a_1 ^{k_1})^{1-(-t_1)^s}.$$
\end{lemma}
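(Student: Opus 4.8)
The plan is to reduce the statement to two ingredients: a clean description of how $a^s$ conjugates each $a_i$, and the standard commutator identity for a product. First I would upgrade the given relation $(a_i^{-1})^a = a_i^{t_i}$ into $a_i^a = a_i^{-t_i}$ by taking inverses, and then prove by induction on $s$ that $a_i^{a^s} = a_i^{(-t_i)^s}$ for every $i$; the inductive step is immediate since $a_i^{a^{s+1}} = (a_i^{(-t_i)^s})^a = (a_i^a)^{(-t_i)^s} = a_i^{(-t_i)^{s+1}}$. This single identity encodes the essential fact that conjugation by $a^s$ multiplies the exponent of $a_i$ by $(-t_i)^s$ modulo $p_i^{\alpha_i}$.

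Second, I would dispatch the single-factor case using the commutator convention $x^y = x[x,y]$, so that $(a^s)^g = a^s[a^s,g]$ for any $g$. Taking $g = a_i^{k_i}$ and using the first step, one computes $[a^s, a_i^{k_i}] = (a_i^{-k_i})^{a^s} a_i^{k_i} = a_i^{-k_i(-t_i)^s} a_i^{k_i} = (a_i^{k_i})^{1-(-t_i)^s}$, which is exactly the $i$-th factor appearing in the claimed product. This is the computational heart of the lemma, and it uses nothing beyond the exponent relation together with the abelianness of $\langle a_i \rangle$.

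Finally, I would globalize to $g = a_1^{k_1}\cdots a_n^{k_n}$ through the identity $[x, yz] = [x,z]\,[x,y]^z$. Since each commutator $[a^s, a_i^{k_i}]$ is a power of $a_i$ and hence lies in the abelian group $A$, conjugation by any element of $A$ fixes it; therefore all the superscripts produced by the identity are trivial, and an induction on $n$ collapses it to $[a^s, a_1^{k_1}\cdots a_n^{k_n}] = [a^s, a_n^{k_n}]\cdots[a^s, a_1^{k_1}]$. Substituting the single-factor evaluation and prepending $a^s$ then yields the asserted formula. The step most prone to error, and thus the one I would treat most carefully, is the sign bookkeeping in the exponent relation $a_i^{a^s} = a_i^{(-t_i)^s}$ together with the verification that the conjugations $[a^s,a_i^{k_i}]^z$ vanish; the latter is precisely where the hypothesis that $A$ is the internal direct product of the $\langle a_i\rangle$ is used, and it also makes the order of the factors in the final product immaterial.
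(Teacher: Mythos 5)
Your proof is correct, and it reorganizes the computation along genuinely different lines from the paper. The paper works entirely on the side of the conjugated element: starting from $a^{a_i} = a\,a_i^{t_i+1}$ and $a_i a = a\,a_i^{-t_i}$, it runs an induction on $k_i$ to get $a^{a_i^{k_i}} = a\,a_i^{k_i(t_i+1)}$, a second induction on the exponent $s$ to get $(a^s)^{a_i} = a^s a_i^{1-(-t_i)^s}$, a third iteration on $k_i$ to reach $(a^s)^{a_i^{k_i}} = a^s (a_i^{k_i})^{1-(-t_i)^s}$, and finally peels off the factors $a_1^{k_1},\dots,a_n^{k_n}$ one at a time, tacitly using that powers of $a_j$ already produced are unmoved by later conjugations. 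You instead dualize: a single induction on $s$ establishes the power-automorphism formula $a_i^{a^s} = a_i^{(-t_i)^s}$, the single-factor case drops out of the one-line commutator computation $[a^s, a_i^{k_i}] = (a_i^{-k_i})^{a^s} a_i^{k_i} = (a_i^{k_i})^{1-(-t_i)^s}$, and the multi-factor case follows from $[x,yz]=[x,z][x,y]^z$ once you observe that every commutator $[a^s,a_i^{k_i}]$ lies in the abelian group $A$, so the conjugating superscripts are trivial. Your route buys fewer inductions, sign bookkeeping localized to one identity, and it isolates the exact hypothesis doing the work at the collapsing step (abelianness of $A$ — note that only abelianness is needed there, not the full direct-product decomposition you cite); the paper's bare-hands shuttling of generators, by contrast, matches the style of its companion Lemma 4.4 and keeps everything in the normal form $a^s a_n^{l_n}\cdots a_1^{l_1}$ that the later arguments in Theorem 4.7 read off directly. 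Your final product also emerges in precisely the stated order $[a^s,a_n^{k_n}]\cdots[a^s,a_1^{k_1}]$, so no reordering within $A$ is even needed.
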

\begin{proof}
Since $(a_i ^{-1})^a = a_i ^{t_i} = a^{-1} a_i ^{-1} a$, it follows that $a^{a_i} = a_i ^{-1} a a_i = a a_i ^{t_i} a_i = a a_i ^{t_i +1}$, and $a_i a = a a_i ^{-t_i}$. Therefore we conclude that 
$$a^{a_i ^{k_i}} = (a a_i ^{t_i +1})^{a_i ^{k_i -1}} = (a a_i ^{t_i +1} a_i ^{t_i +1})^{a_i ^{k_i -2}} = \cdots = a a_i ^{k_i (t_i +1)}.$$
Induction on $n$ gives that $ (a^n)^{a_i} = a^n a_i ^{1 - (-t_i)^n},\,n \in \mathbb{N}$. Hence for any $s \in \mathbb{N}$, we indicate that 
$$(a^s)^{a_i ^{k_i}} = (a^s a_i ^{1 - (-t_i)^s})^{a_i ^{k_i -1}} = (a^s a_i ^{1 - (-t_i)^s} a_i ^{1 - (-t_i)^s} )^{a_i ^{k_i -2}}=  \cdots = a^s  
(a_i ^{1 - (-t_i)^s})^{k_i} = a^s (a_i ^{k_i})^{1 - (-t_i)^s}.$$
Apply the observation above, it follows that 
\begin{align*}
&(a^s)^{a_1 ^{k_1} a_2 ^{k_2} \cdots a_n ^{k_n}} =( a^s (a_1 ^{k_1})^{1 - (-t_1)^s} )^{a_2 ^{k_2} \cdots a_n ^{k_n}} = (a^s (a_2 ^{k_2})^{1 - (-t_2)^s}  (a_1 ^{k_1})^{1 - (-t_1)^s})^{a_3 ^{k_3} \cdots a_n ^{k_n}} \\
 =&\cdots = a^s (a_n ^{k_n})^{1-(-t_n)^s} \cdots (a_2 ^{k_2})^{1-(-t_2)^s} (a_1 ^{k_1})^{1-(-t_1)^s}. \qedhere
\end{align*}
\end{proof}
We shall freely use this lemma in this section.
\begin{lemma}\label{2}
Let $s_1,s_2,f_1,\cdots,f_n,u_1,\cdots,u_n$ be non-negative integers. Assume that 
$$g_1 = a^{s_1} a_n ^{f_n} \cdots a_1 ^{f_1} ,\,g_2 =  a^{s_2} a_n ^{u_n} \cdots a_1 ^{u_1}.$$
Then we have  
$$g_1 g_2 = a^{s_1 +s_2} a_n ^{u_n +(-t_n)^{s_2} f_n} \cdots a_2 ^{u_2 +(-t_2)^{s_2} f_2} a_1 ^{u_1 +(-t_1)^{s_2} f_1}.$$
\end{lemma}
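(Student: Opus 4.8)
The plan is to reduce everything to a single commutation rule and then carry out a straightforward rearrangement. First I would record the basic relation governing how $a$ and the generators $a_i$ interact. From $(a_i^{-1})^a = a_i^{t_i}$ one has $a_i a = a a_i^{-t_i}$, and iterating this (exactly as in the proof of the preceding lemma) gives
\[
a_i^{f} a^{s} = a^{s} a_i^{(-t_i)^{s} f}
\]
for all non-negative integers $f,s$; intuitively, pushing a power of $a$ leftward past $a_i^{f}$ multiplies the exponent of $a_i$ by $(-t_i)^{s}$. I would also use repeatedly that $A = C_{p_1^{\alpha_1}} \times \cdots \times C_{p_n^{\alpha_n}}$ is abelian, so that $a_1,\dots,a_n$ commute with one another and may be collected independently.

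With this in hand the computation is direct. Writing
\[
g_1 g_2 = a^{s_1}\,(a_n^{f_n}\cdots a_1^{f_1})\,a^{s_2}\,(a_n^{u_n}\cdots a_1^{u_1}),
\]
I would move the middle factor $a^{s_2}$ to the left past the block $a_n^{f_n}\cdots a_1^{f_1}$. Applying the commutation rule to each $a_i^{f_i}$ separately, which is legitimate because the $a_i$ commute, turns this block into $a^{s_2}\,a_n^{(-t_n)^{s_2} f_n}\cdots a_1^{(-t_1)^{s_2} f_1}$. Then $a^{s_1} a^{s_2} = a^{s_1+s_2}$ combines the two powers of $a$, and for each index $i$ the contribution $a_i^{(-t_i)^{s_2} f_i}$ arising from $g_1$ meets the contribution $a_i^{u_i}$ from $g_2$; since $A$ is abelian these merge into $a_i^{u_i + (-t_i)^{s_2} f_i}$, yielding exactly
\[
g_1 g_2 = a^{s_1+s_2}\, a_n^{u_n+(-t_n)^{s_2} f_n}\cdots a_2^{u_2+(-t_2)^{s_2} f_2}\, a_1^{u_1+(-t_1)^{s_2} f_1}.
\]

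There is no serious obstacle here: the entire content is the single commutation identity together with the commutativity of $A$, and the remainder is bookkeeping of exponents. The only point that deserves a line of care is justifying the displayed rule $a_i^{f} a^{s} = a^{s} a_i^{(-t_i)^{s} f}$ by a clean double induction (first on $s$ with $f=1$, then on $f$), so that the sign pattern $(-t_i)^{s}$ is pinned down correctly; once that is in place, pushing $a^{s_2}$ through the product and regrouping gives the claim at once.
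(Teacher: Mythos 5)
Your proposal is correct and follows essentially the same route as the paper: both derive the key commutation identity $a_i^{f} a^{s} = a^{s} a_i^{(-t_i)^{s} f}$ by iterating $a_i a = a a_i^{-t_i}$, push $a^{s_2}$ leftward through the block $a_n^{f_n}\cdots a_1^{f_1}$, and then merge exponents using the commutativity of $A$. The only cosmetic difference is that you state the iterated rule as a standalone identity proved by induction, while the paper unwinds it inline via the chain $a_i^{f_i} a^{s_2} = a\, a_i^{-t_i f_i} a^{s_2-1} = \cdots = a^{s_2} a_i^{(-t_i)^{s_2} f_i}$.
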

\begin{proof}
Since $a_i a = a a_i ^{-t_i}$, it follows from $a_i ^{f_i} a^{s_2} = a a_i ^{-t_i f_i} a^{s_2 -1} = \cdots = a^{s_2} a_i ^{(-t_i)^{s_2} f_i}$ that
\begin{align*}
g_1 g_2 &= a^{s_1} a_n ^{f_n} \cdots a_1 ^{f_1} a^{s_2} a_n ^{u_n} \cdots a_1 ^{u_1} = a^{s_1} a_n ^{f_n} \cdots a_2 ^{f_2} a^{s_2} a_1 ^{(-t_1)^{s_2} f_1} a_n ^{u_n} \cdots a_1 ^{u_1} \\
&=  a^{s_1} a_n ^{f_n} \cdots a_3 ^{f_3} a^{s_2} a_1 ^{(-t_1)^{s_2} f_1} a_2 ^{(-t_2)^{s_2} f_2} a_n ^{u_n} \cdots a_1 ^{u_1}  =\cdots \\
&= a^{s_1 +s_2}  a_1 ^{(-t_1)^{s_2} f_1} a_2 ^{(-t_2)^{s_2} f_2} \cdots  a_n ^{(-t_n)^{s_2} f_n} a_n ^{u_n} \cdots a_1 ^{u_1} \\
&= a^{s_1 +s_2} a_n ^{u_n +(-t_n)^{s_2} f_n} \cdots a_2 ^{u_2 +(-t_2)^{s_2} f_2} a_1 ^{u_1 +(-t_1)^{s_2} f_1}. \qedhere
\end{align*}
\end{proof}
Now we introduce the following notations to simplify the narration .
\begin{definition}
Let $n$ be a positive integer, and $n = p_1 ^{\alpha_1} p_2 ^{\alpha_2} \cdots  p_m ^{\alpha_m}$, where $p_1,p_2,\cdots,p_m$ are distinct primes. Set $\pi (n) = \lbrace p_1,p_2,\cdots,p_n \rbrace$. For a prime $p$, $v_p (n) := \alpha_i$, if $p =p_i,\,i=1,2,\cdots,m$; $v_p (n) := 0$, if $p \notin \pi (n)$.
\end{definition}

\begin{theorem}\label{3}
Suppose that $p > p_1,p_2,\cdots,p_n$. Then $G$ is a PNC-group if and only if 
$$v_{p_i} (t_i +1) = 0 ~\text{or}~ \alpha_i,\,i=1,2,\cdots,n.$$
\end{theorem}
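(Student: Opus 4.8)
The plan is to analyze an arbitrary subgroup $H\le G$ through the normal form of Lemma \ref{2} and to verify the identity $H^G N_G(H)=G$ by a case analysis governed, for each index $i$, by the $p_i$-adic valuation $v_{p_i}(1-(-t_i)^{s})$ appearing in the conjugation formula $(a^{s})^{a_i^{k_i}} = a^{s}(a_i^{k_i})^{1-(-t_i)^{s}}$ established above. First I would record the two structural reductions that make this tractable: every subgroup of the abelian normal Hall subgroup $A$ is normal in $G$, hence trivially an NC-subgroup; and, since $\gcd(|A|,o(a))=1$, Schur--Zassenhaus lets me write each $H$ as $(H\cap A)\rtimes C$ with $H\cap A\unlhd G$ and $C$ a cyclic $p$-group mapping isomorphically onto the image $\langle a^{s}\rangle$ of $H$ in $G/A\cong\langle a\rangle$. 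Thus it suffices to treat $H$ generated by $H\cap A$ together with a single element $a^{s}w$ ($w\in A$), and to compute $H^{G}$ and $N_G(H)$ componentwise in the cyclic factors $\langle a_i\rangle$.

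For the sufficiency direction I would fix such an $H$ and, for each $i$, split on the hypothesis $v_{p_i}(t_i+1)\in\{0,\alpha_i\}$. The key elementary fact, which I would isolate as the engine of the argument, is that this hypothesis forces the exponent $1-(-t_i)^{s}$ to be \emph{either} $\equiv 0$ \emph{or} a unit modulo $p_i^{\alpha_i}$, for every $s$: if $t_i+1\equiv 0\pmod{p_i^{\alpha_i}}$ then $a$ centralises $\langle a_i\rangle$, whereas if $p_i\nmid t_i+1$ then $(-t_i)^{s}\equiv 1\pmod{p_i}$ forces $(-t_i)^{s}=1$ in $\mathrm{Aut}(\langle a_i\rangle)$, the assumption $p>p_i$ being exactly what pins down the order of the induced automorphism. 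Consequently the $\langle a_i\rangle$-component of a conjugate $(a^{s}w)^{x}$ is either left untouched, feeding $\langle a_i\rangle$ into $C_A(H)\le N_G(H)$, or ranges over all of $\langle a_i\rangle$, feeding $\langle a_i\rangle$ into $H^{G}$. Collecting the contributions over all $i$ yields $A\le H^{G}N_G(H)$, and since $H$ surjects onto $\langle a^{s}\rangle$ in $\langle a\rangle$ one upgrades this to $H^{G}N_G(H)=G$; hence $H$ is an NC-subgroup and $G$ is a PNC-group.

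For necessity I would argue contrapositively. Suppose $0<v_{p_i}(t_i+1)<\alpha_i$ for some $i$ and set $e=v_{p_i}(t_i+1)$; then $a$ fixes $\langle a_i^{\,p_i^{\alpha_i-e}}\rangle$ pointwise but acts nontrivially on $a_i$. I would test the cyclic subgroup $H=\langle a\,a_i^{\,p_i^{\alpha_i-e}}\rangle$: by the conjugation formula every $\langle a_i\rangle$-component produced in $H^{G}$ lies in $\langle a_i^{\,p_i^{e}}\rangle$, and a parallel computation bounds the $\langle a_i\rangle$-part of $N_G(H)$, so the $\langle a_i\rangle$-projection of $H^{G}N_G(H)$ is a proper subgroup of $\langle a_i\rangle$. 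Thus $H^{G}N_G(H)\ne G$ and $G$ fails to be a PNC-group, which gives the contrapositive.

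The main obstacle I anticipate is the simultaneous bookkeeping in the sufficiency step: one must control $H^{G}$ and $N_G(H)$ at once across all $n$ cyclic factors, since a generator $a^{s}w$ contributes to the normaliser only when its action is trivial on \emph{every} relevant $\langle a_j\rangle$, while it contributes to the normal closure through each factor on which it acts nontrivially. Making the ``untouched-or-swept'' dichotomy uniform over all $s$ and verifying that the normaliser contributions together with the normal-closure contributions exhaust $A$ is where the multiplication rule of Lemma \ref{2} must be applied carefully; the hypothesis $p>p_i$ is precisely what guarantees the clean dichotomy $v_{p_i}(1-(-t_i)^{s})\in\{0,\infty\}$ that prevents intermediate, partially-swept subgroups from obstructing the argument.
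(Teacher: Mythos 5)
Your sufficiency direction is essentially sound and runs parallel to the paper's: your per-index ``untouched-or-swept'' dichotomy (for a $p$-power exponent $s$, the hypothesis $v_{p_i}(t_i+1)\in\{0,\alpha_i\}$ together with $p>p_i$ forces $1-(-t_i)^{s}$ to be $0$ or a unit modulo $p_i^{\alpha_i}$, since $\gcd(p^{\alpha_0},p_i-1)=1$) is correct and, if anything, cleaner than the paper's comparison of $\langle a^{q}\rangle^{G}$ with $\langle a\rangle^{G}$ via the lifting-the-exponent lemma. But there is a loose end at the finish: from $A\le H^{G}N_G(H)$ and the fact that $H$ surjects onto $\langle a^{s}\rangle$ you only obtain $A\langle a^{s}\rangle\le H^{G}N_G(H)$, which is a \emph{proper} subgroup of $G$ whenever $s$ is a proper power of $p$; nothing in your write-up puts $a$ itself into the product. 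The paper closes exactly this hole by first conjugating so that the Schur--Zassenhaus complement lies inside $\langle a\rangle$ (legitimate, as the NC-property is conjugation-invariant), so that $H=(H\cap A)\langle a^{q}\rangle$ with $H\cap A\unlhd G$, whence $\langle a\rangle\le N_G(H)$ and the product is all of $G$. That fix is one sentence, so this gap is minor.

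The necessity direction, however, contains a genuine error: your test subgroup is wrongly chosen. Set $e=v_{p_i}(t_i+1)$ with $0<e<\alpha_i$ and $z=a_i^{p_i^{\alpha_i-e}}$. Since $a$ centralises $z$ and $(o(a),o(z))=1$, your $H=\langle az\rangle$ equals $\langle a\rangle\times\langle z\rangle$, so $z\in H\le H^{G}$ already contributes $\langle a_i\rangle$-components of valuation exactly $\alpha_i-e$; when $2e>\alpha_i$ this is \emph{smaller} than $e$, so your claim that all $\langle a_i\rangle$-components of $H^{G}$ lie in $\langle a_i^{p_i^{e}}\rangle$ is false. Worse, the normaliser condition $(az)^{g}\in H$ only requires $(a_i^{k_i})^{1+t_i}\in\langle z\rangle$ rather than $(a_i^{k_i})^{1+t_i}=1$, giving the bound $v_{p_i}(k_i)\ge\alpha_i-2e$, which is vacuous once $2e\ge\alpha_i$. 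Concretely, take $n=1$, $\alpha_1=3$, $t_1\equiv-1+uq^{2}\pmod{q^{3}}$ with $u$ a unit, so $e=2$: then $a^{a_1}=a(a_1^{q})^{uq}\in H$ and $z$ is central, so $H$ is actually \emph{normal} in $G$ and trivially an NC-subgroup --- your projection argument collapses even though $G$ is indeed not a PNC-group here. The paper instead tests the bare subgroup $\langle a\rangle$, for which both bounds are nontrivial for every $0<e<\alpha_i$: each $a_i$-exponent occurring in $\langle a\rangle^{G}$ has valuation $\ge e\ge1$ (because $1+t_i$ divides $1-(-t_i)^{s}$), while $g\in N_G(\langle a\rangle)$ forces $(a_i^{k_i})^{1+t_i}=1$, i.e.\ valuation $\ge\alpha_i-e\ge1$, and the multiplication rule of Lemma \ref{2} then shows $a_i\notin\langle a\rangle^{G}N_G(\langle a\rangle)$. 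Multiplying the generator by $a_i^{p_i^{\alpha_i-e}}$ is precisely what ruins the witness: it injects low-valuation $a_i$-material into $H^{G}$ and enlarges $N_G(H)$. Replace your $H$ by $\langle a\rangle$ and run your own valuation bookkeeping, and the necessity argument goes through as in the paper.
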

\begin{proof}
The proof is proceeded via the two cases.
\begin{itemize}
\item[(1)] $\alpha_1 = \alpha_2 = \cdots = \alpha_n =1$. 
\end{itemize}
It suffices to prove that the conclusion holds whatever $t_i,i=1,2,\cdots,n$ are. Consider the subgroup $\langle a \rangle$ firstly. For $s,k_1,k_2,\cdots,k_n \in \mathbb{N}$, it follows that 
$$(a^s)^{a_1 ^{k_1} a_2 ^{k_2} \cdots a_n ^{k_n}} = a^s (a_n ^{k_n})^{1-(-t_n)^s} \cdots (a_2 ^{k_2})^{1-(-t_2)^s} (a_1 ^{k_1})^{1-(-t_1)^s}.$$
Without loss of generality, $1+ t_i \equiv 0 \,(\!\!\!\mod p_i),\,i=1,2,\cdots,j$, $1+t_i \not\equiv 0\,(\!\!\!\mod p_i),\,i=j+1,\cdots,n$. For any $w \in \lbrace  j+1,\cdots, n \rbrace$, let $s =1, k_i =0 , i \neq  w$, we conclude that 
$$(a^s)^{a_1 ^{k_1} a_2 ^{k_2} \cdots a_n ^{k_n}} = a(a_w ^{k_w})^{1+ t_w} \in \langle a \rangle ^G.$$
Since $1 + t_w \not\equiv 0 \,(\!\!\! \mod p_w )$, by the choice of $k_w$, $C_{p_w ^{\alpha_w}} = \langle  a_w \rangle \leq \langle a \rangle ^G$, $w = j+1,\cdots,n$. For any $h \in \lbrace 1,2,\cdots, j \rbrace$, since $1+ t_h \equiv 0 \,\,(\!\!\!\mod p_h)$, it follows that
$$ (a^s)^{a_h ^{k_h}} = a^s (a_h ^{k_h})^{1+ t_h} = a^s. $$
By the choice of $s$, we indicate that $a_h ^{k_h} \in N_G (\langle a \rangle)$. It follows from the randomness of $k_h$ that $C_{p_h ^{\alpha_h}} \leq N_G (\langle a \rangle)$, $ h \in \lbrace 1,2,\cdots, j \rbrace$. Hence $C_{p_i ^{\alpha_i}} \leq N_G (\langle a \rangle) \langle a \rangle ^G$, $i =1,2,\cdots,n$, which implies that $N_G (\langle a \rangle) \langle a \rangle ^G = G$. Let $U$ be a subgroup of $G$. If $U$ has trivial $p$-part, it follows directly that $U \unlhd G$. If $U$ has non-trivial $p$-part, let $P \in {\rm{Syl}}_p (U)$. Without loss of generality, we may assume that $P  = \langle a^q \rangle \leq  \langle a \rangle$, where $q = p^{\alpha_0}$. Now we predicate that $\langle a \rangle ^G =\langle a^{q} \rangle  ^G \langle a \rangle$. As a matter of fact, we have
\begin{align*}
\langle a^{q} \rangle ^G &= \langle  (a^{sq})^{a_1 ^{k_1} a_2 ^{k_2} \cdots a_n ^{k_n}}| s,k_1,k_2,\cdots,k_n \in \mathbb{N} \rangle \\
&=\langle a^{sq} (a_n ^{k_n})^{1-(-t_n)^{sq}} \cdots (a_2 ^{k_2})^{1-(-t_2)^{sq}} (a_1 ^{k_1})^{1-(-t_1)^{sq}} | s,k_1,k_2,\cdots,k_n \in \mathbb{N} \rangle.
\end{align*}
Since $p_i \nmid 1,\,p_i \nmid t_i$, it follows from lifting-the-exponent lemma that $v_{p_i} (1-(-t_i)^{q}) = v_{p_i} (1+t_i) +v_{p_i} (q) = v_{p_i} (1+t_i)$, if $p_i | 1+t_i$. If $p_i \nmid 1+t_i$, but $p_i | 1-(-t_i)^{q}$, then we conclude that $p_i | 1-(-t_i)^{(q,p_i -1)}$. Since $p > p_1,p_2,\cdots,p_n$, we have $p_i | 1+t_i$, a contradiction. Thus  $v_{p_i} (1-(-t_i)^{q}) = v_{p_i} (1+t_i) +v_{p_i} (q) = v_{p_i} (1+t_i)$ holds when $p_i \nmid 1+t_i$. If $v_{p_i} (1+t_i) = 0$, then we indicate that $ C_{p_i ^{\alpha_i}}  \leq \langle a \rangle ^G $, and $v_{p_i} (1-(-t_i)^{q}) = 0 $. It follows that $C_{p_i ^{\alpha_i}}  \leq \langle a \rangle ^G$ and $C_{p_i ^{\alpha_i}}  \leq \langle a^{q} \rangle ^G$. If $v_{p_i} (1+t_i) \neq 0$, then we indicate that $v_{p_i} (1+t_i)  =v_{p_i} (1-(-t_i)^{q})=1$. Therefore we have $(a_i ^{k_i})^{1-(-t_i)^{s}}=(a_i ^{k_i})^{1-(-t_i)^{qs}} =1 ,\,s,k_i \in \mathbb{N}$. It follows from lemma \ref{2} that $1 =  C_{p_i ^{\alpha_i}} \cap \langle a \rangle ^G =  C_{p_i ^{\alpha_i}} \cap \langle a^{q} \rangle ^G $. Hence we indicate that $\langle a \rangle ^G =\langle a^{q} \rangle  ^G \langle a \rangle \leq U^G \langle a \rangle$. Now we may assume that $U = \langle a_{d_1} \rangle \times \langle a_{d_2} \rangle \times \cdots \times \langle a_{d_r} \rangle \rtimes  \langle a^{q} \rangle$, where $1 \leq d_1,\cdots,d_r \leq n$ are distinct integers. Since $\langle a_{d_i} \rangle $ are normal subgroups of $G$, it follows that $\langle a \rangle \leq N_G (\langle a \rangle) \leq N_G (U)$. Thus $ N_G (U)(\langle a \rangle U^G) = (N_G (U) \langle a \rangle) U^G =N_G (U)  U^G \geq N_G (\langle a \rangle) \langle a \rangle ^G = G$. By the choice of $U$, we conclude that $G$ is a PNC-group.
\begin{itemize}
\item[(2)] There exists $i \in \mathbb{N}$ such that $\alpha_i \neq 1$. Without loss of generality we may assume that $\alpha_1 = \cdots \alpha_j =1$, $\alpha_{j+1}, \cdots, \alpha_n \neq 1$. 
\end{itemize}
\begin{itemize}
\item[(i)] Necessity of the proof.
\end{itemize}
Suppose that $G$ is a PNC-group, but there exists $i_0 \in \mathbb{N^{*}}$ such that $1 \leq v_{p_{i_0}} (1+ t_{i_0}) < \alpha_{i_0}$. It is clear that $j+1 \leq i_0 \leq n$. Now consider the group $\langle a \rangle ^G$. For $s,k_1,k_2,\cdots,k_n \in \mathbb{N}$, we have
$$(a^s)^{a_1 ^{k_1} a_2 ^{k_2} \cdots a_n ^{k_n}} = a^s (a_n ^{k_n})^{1-(-t_n)^s} \cdots (a_2 ^{k_2})^{1-(-t_2)^s} (a_1 ^{k_1})^{1-(-t_1)^s}.$$
It follows that 
$$\langle a \rangle ^G = \langle (a^s)^{a_1 ^{k_1} a_2 ^{k_2} \cdots a_n ^{k_n}} | s,k_1,k_2,\cdots,k_n \in \mathbb{N} \rangle.$$
On the one hand, we have $p_{i_0} | 1+ t_{i_0} | 1-(-t_{i_0})^s | k_i (1-(-t_{i_0})^s)$. It follows from lemma \ref{2} that for any $g = a^{l} a_n ^{l_n} \cdots a_2 ^{l_2} a_1 ^{l_1} \in \langle a \rangle ^G$, $p_{i_0} | l_{i_0}$.
On the other hand, for any $g = a^{k} a_n ^{k_n} \cdots a_2 ^{k_2} a_1 ^{k_1} \in N_G (\langle a \rangle)$, we indicate that
$$a^g = a (a_n ^{k_n})^{1-(-t_n)} \cdots (a_2 ^{k_2})^{1-(-t_2)} (a_1 ^{k_1})^{1-(-t_1)} \in \langle a \rangle.$$
Since $G = A \rtimes \langle a \rangle$, we have $(a_n ^{k_n})^{1-(-t_n)} \cdots (a_2 ^{k_2})^{1-(-t_2)} (a_1 ^{k_1})^{1-(-t_1)} =1$. It follows from $A = C_{p_1 ^{\alpha _1}} \times C_{p_2 ^{\alpha _2}} \times \cdots \times C_{p_n ^{\alpha _n}}$ that $  (a_1 ^{k_1})^{1-(-t_1)} = (a_2 ^{k_2})^{1-(-t_2)} = \cdots (a_n ^{k_n})^{1-(-t_n)} =1$. Therefore we have $a_{i_0} ^{k_{i_0}(1+t_{i_0})} =1$. Hence $p_{i_0} ^{\alpha_{i_0}} | k_{i_0} (1+t_{i_0})$, which implies that $1 \leq v_{p_{i_0}} (k_{i_0})$, i.e. $p_{i_0} | k_{i_0}$. By lemma \ref{2}, we conclude that the exponent of $a_i$ in the factorization of any element in $N_G (\langle a \rangle) \langle a \rangle ^G$ is a multiple of $p_{i_0}$. Thus we indicate that $C_{{p_{i_0}}^{\alpha _{i_0}}} \nleqslant N_G (\langle a \rangle) \langle a \rangle ^G$, a contradiction to the fact that $\langle a \rangle$ is an NC-group of $G$.
\begin{itemize}
\item[(ii)] Sufficiency of the proof.
\end{itemize}
Without loss of generality, we may assume that $v_{p_i} (t_i +1) = 0,\,i=1,\cdots,l$, $v_{p_i} (t_i +1)  = \alpha_i,\,i=l+1,\cdots,n$. On the one hand, let $a_i ^{k_i} \in C_{{p_i}^{\alpha_i}},\,i=1,\cdots,l$, then we indicate that 
$$a^{a_i ^{k_i}} = a (a_i ^{k_i})^{1+t_i}.$$
By the choice of $k_i$, it follows from $(1+t_i, p_i) =1$ that $C_{{p_i}^{\alpha_i}} =  \langle a_i ^{1+t_i} \rangle \leq \langle a \rangle ^G$. On the other hand, let $a_i ^{k_i} \in C_{{p_i}^{\alpha_i}},\,i=l+1,\cdots,n$. We conclude from $p_i ^{\alpha_i} | 1+t_i$ that 
$$a^{a_i ^{k_i}} = a (a_i ^{k_i})^{1+t_i} = a.$$
By the choice of $k_i$ we have $C_{{p_i}^{\alpha_i}} \leq N_G (\langle a \rangle)$, $i=l+1,\cdots,n$. Thus we indicate that $G = N_G (\langle a \rangle) \langle a \rangle ^G$. Now let $U$ be a subgroup of $G$ and $P$ be a Sylow $p$-subgroup of $U$. If $U$ has trivial $p$-part, then $U \unlhd G$. If $U$ has non-trivial $p$-part, without loss of generality we may assume that $P = \langle a^{q} \rangle \leq  \langle a \rangle$ with $q = p^{\alpha_0}$. It follows from the same method used in (1) that $U^G  \langle a \rangle \geq \langle a \rangle ^G$. Since all subgroups of $A$ are normal in $G$, we indicate that $\langle a \rangle \leq N_G (\langle a \rangle) \leq N_G (U)$. Thus $ N_G (U)(\langle a \rangle U^G) = (N_G (U) \langle a \rangle) U^G =N_G (U)  U^G \geq N_G (\langle a \rangle) \langle a \rangle ^G = G$. By the choice of $U$, we conclude that $G$ is a PNC-group.
\end{proof}
\begin{theorem}\label{sufficiency}
Assume that $G$ is a finite group such that $G = A \rtimes D$, where $A$ is an abelian Hall subgroup and $D$ is a Dedkind group. If every subgroup of $A$ is normal in $G$, and for any $a \in A$, $d \in D$, $a^d = a^n$ with $(n,o(a)) =1$, either $n \equiv 1 \,(\!\!\!\mod o(a))$ or $(n-1,o(a))=1$, then $G$ is a PNC-group. In particular, for any $a \in A$ and any subgroup $H$, either $a \in N_G (H)$ or $a \in H^G$.
\end{theorem}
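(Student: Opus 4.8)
The plan is to verify the defining property directly: for an arbitrary subgroup $H \le G$ I must show $H^G N_G(H) = G$. First I would record that the NC-property is invariant under conjugation, since $(H^g)^G = H^G$ and $N_G(H^g) = N_G(H)^g$ give $(H^g)^G N_G(H^g) = g^{-1}\bigl(H^G N_G(H)\bigr)g$, which equals $G$ exactly when $H^G N_G(H) = G$. This lets me normalise the shape of $H$. Put $A_0 := H \cap A$; since every subgroup of $A$ is normal in $G$ we have $A_0 \unlhd G$, and because $A$ is a Hall subgroup, $A_0$ is a normal Hall subgroup of $H$. By Schur--Zassenhaus $A_0$ has a complement $B$ in $H$, and as $G = A \rtimes D$ is solvable ($A$ abelian, $D$ Dedekind hence nilpotent) and $B$ is a $\pi(D)$-subgroup, $B$ is contained in a conjugate of the Hall subgroup $D$ by Hall's theorem. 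Replacing $H$ by a conjugate, I may therefore assume $H = A_0 \rtimes B$ with $A_0 \le A$, $A_0 \unlhd G$ and $B \le D$.

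With $H$ in this form the $D$-part is immediate: for every $d \in D$ one has $A_0^d = A_0$ (as $A_0 \unlhd G$) and $B^d = B$ (as $B \unlhd D$, $D$ being Dedekind), so $H^d = H$; hence $D \le N_G(H)$. The substance of the argument is the ``in particular'' clause, which handles the $A$-part. Fix $a \in A$ and write $a^b = a^{n_b}$ for $b \in B \le D$. A direct manipulation gives $a^{-1} b a = b\, a^{\,1-n_b}$, so that $a^{\,1-n_b} = b^{-1}(a^{-1} b a) \in B^G \le H^G$. Now I split into two cases. If some $b \in B$ acts nontrivially on $\langle a\rangle$, i.e.\ $n_b \not\equiv 1 \pmod{o(a)}$, then the hypothesis forces $(n_b - 1, o(a)) = 1$, whence $\langle a^{\,1-n_b}\rangle = \langle a\rangle$ and therefore $a \in H^G$. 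Otherwise every $b \in B$ centralises $\langle a\rangle$; then $a$ centralises $B$ and normalises $A_0$, so $a$ normalises $H = A_0 B$ and $a \in N_G(H)$. In either case $a$ lies in $H^G N_G(H)$.

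Finally I would assemble the pieces. Since $H^G \unlhd G$, the product $H^G N_G(H)$ is a subgroup of $G$; by the previous paragraph it contains every element of $A$, and by the $D$-part observation it contains $D$. Hence it contains $\langle A, D\rangle = AD = G$, giving $H^G N_G(H) = G$. As $H$ was arbitrary, $G$ is a PNC-group, and the displayed dichotomy for $a \in A$ is exactly the case analysis above (the statement for an unreduced $H$ following by the same conjugation-invariance). I expect the only delicate point to be the computation $a^{-1} b a = b\, a^{\,1-n_b}$ together with the correct invocation of the arithmetic hypothesis to pass from $a^{\,1-n_b} \in H^G$ to $a \in H^G$; the reduction to $B \le D$ via Schur--Zassenhaus and Hall's theorem is routine, but should be stated carefully so that both normality facts $A_0 \unlhd G$ and $B \unlhd D$ remain available.
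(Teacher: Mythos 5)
Your proof is correct and follows essentially the same route as the paper's: reduce $H$ via Schur--Zassenhaus and Hall's theorem to the form $(H\cap A)\rtimes B$ with $B\le D$, deduce $D\le N_G(H)$ from normality of the subgroups of $A$ and the Dedekind property of $D$, and then use the identity $a^{-1}ba = b\,a^{1-n_b}$ together with the arithmetic hypothesis to place each $a\in A$ in $H^G$ or $N_G(H)$. Your explicit justification of the ``without loss of generality'' step via conjugation-invariance of the NC-property is a point the paper leaves implicit, but the argument is otherwise the same.
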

\begin{proof}
Let $H$ be a subgroup of $G$. For any $p_i \in \pi (A) = \lbrace p_1,p_2,\cdots,p_n \rbrace,\,i=1,2,\cdots,n$, one can easily indicates that $G$ is $p_i$-closed. Let $P_i \leq H$ be a Sylow $p_i$-subgroup of $H$, then $P_i \leq S$ where $S$ denotes the  Sylow $p_i$-subgroup of $G$. Therefore $P_i$ is normal in $G$. Now let $P:=P_1 \times P_2 \times \cdots \times P_n \unlhd G$, then $P$ is a Hall subgroup of $A$. Hence by Schur-Zassenhaus theorem we conclude that $P$ has a complement $K$ in $A$. Since $G/A \cong D$ is solvable, $A$ is solvable, we conclude that $G$ is solvable. Hence $\pi$-Sylow theorem holds for $G$. Thus without the loss of generality, we may assume that $K \leq D$. Now let 
$$P = \langle p_{1,1} \rangle \times \langle p_{1,2} \rangle \times \cdots \times \langle p_{1,t_1} \rangle \times \langle p_{2,1} \rangle \times \cdots \times \langle p_{n,t_n} \rangle,\,o(p_{i,j}) = p_i ^{\alpha_{i,j}},\,1 \leq j \leq t_i,\,i=1,2,\cdots,n.$$ 
Therefore we have 
$$H= P \rtimes K = \langle p_{1,1} \rangle \times \langle p_{1,2} \rangle \times \cdots \times \langle p_{1,t_1} \rangle \times \langle p_{2,1} \rangle \times \cdots \times \langle p_{n,t_n} \rangle \rtimes K.$$
For any $d \in D$, it follows from $\langle p_{i,j} \rangle \unlhd G,\,1 \leq j \leq t_i,\,i=1,2,\cdots,n$ that
\begin{align*}
H^d &= \langle p_{1,1} \rangle^d \times \langle p_{1,2} \rangle^d \times \cdots \times \langle p_{1,t_1} \rangle^d \times \langle p_{2,1} \rangle^d \times \cdots \times \langle p_{n,t_n} \rangle^d \rtimes K^d \\
&= \langle p_{1,1} \rangle \times \langle p_{1,2} \rangle \times \cdots \times \langle p_{1,t_1} \rangle \times \langle p_{2,1} \rangle \times \cdots \times \langle p_{n,t_n} \rangle \rtimes K =H.
\end{align*}
Thus $d \in N_G (H)$. Hence we conclude that $D \leq N_G (H)$.
Now for any $a \in A$ and $k \in K \leq D$, it follows that 
\begin{align*}\label{shit}
k = a^{n_{a,k}},\,k^a = k a^{1-n_{a,k}},\,n_{a,k} \in \mathbb{N}.
\tag{1}
\end{align*}
If $n_{a,k} \equiv 1 \,(\!\!\!\mod o(a))$ holds for any $k \in K$, then $k^a = k$ holds for any $k \in K$. Thus $a \in N_G (H)$. If there exists $k \in K$ such that $n_{a,k} \not\equiv 1 \,(\!\!\!\mod o(a))$, by the hypothesis we have $(n_{a,k} -1,o(a))=1$. And by (\ref{shit}) we indicate that $\langle a \rangle \leq H^G$. Hence we conclude that $A \leq N_G (H) H^G$, and therefore $A D = G \leq N_G (H) H^G$. Thus $H$ is an NC-subgroup of $G$. By the choice of $H$, $G$ is a PNC-group.
\end{proof}

\section{Characterizations for solvable PNC-groups}\label{10005}
In the last section, we give some characterizations for non-PE-groups whose proper subgroups are solvable PNC-groups, non-PE-groups whose proper subgroups are solvable PNC-groups, non-PE-groups whose proper subgroups are ON-groups, and classify the ON-groups. Characterizations for groups whose maximal subgroups are solvable PNC-groups are also given in the section. In particular, we classify the non-abelian simple groups whose second maximal subgroups are solvable PNC-groups, and give criteria for non-solvable groups whose second maximal subgroups are solvable PNC-groups. 
\begin{theorem}\label{classification}
If G is a finite non-PE-group, each of whose proper subgroup is a 
solvable PNC-group, then $|\pi (G)| \leq 2$ and one of the following statements is true:
\begin{itemize}
\item[(1)] $G \cong D_4$.
\item[(2)] $G = \langle a,x : a^{p^n} = x^p =1,\,x^{-1} a x = a^{1+p^{n-1}} \rangle$, where $p$ is an odd prime and $n \geq 2$.
\item[(3)] $G = \langle a,b,x: a^{p^n} = b^p = x^p =1,\,[x,a]=b,\,[a,b] = [b,x] =1 \rangle$, where $p$ is a prime.
\end{itemize}
In the following (4)-(6), $P \in {\rm{Syl}}_p (G)$, $Q \in {\rm{Syl}}_p (G)$, $p < q$ are primes. One can easily find that (4)-(6) are solvable groups. It follows from proposition \ref{normal complement} that any proper subgroup of $G$ is $p$-nilpotent. 
\begin{itemize}
\item[(4)] $G = PQ$ is supersolvable, where $P$ is cyclic and $Q$ is elementary abelian of order $q^2$ with $O_q (G) \neq 1$.
\item[(5)] $G =PQ$ is a minimal non-nilpotent group with $P \unlhd G$, and one of the following is true:
\begin{itemize}
\item[(i)] $G$ is a minimal non-abelian group of order $q^m p^n$, and we have
\begin{align*}
G=&\langle a,b_1,b_2, \cdots,b_n|1=a^{q^m}=b_1 ^p=b_2 ^p= \cdots =b_n ^p,\,b_i b_j =b_j  b_i,\,i,j =1,2,\cdots,n,\\
& b_i ^a =b_{i+1},\,i,j =1,2,\cdots,n-1,b_n ^a=b_1 ^{d1} b_2 ^{d_2} \cdots b_n ^{d_n}  \rangle,
\end{align*}
where $f(x) =x^n-d_n x^{n-1}- \cdots -d_2 x-d_1$ is irreducible  on $F_p$, which is a factor of $ x^{q^m}-1$.
\item[(ii)] $G \cong {\rm SL}(2,3)$.
\end{itemize}
\item[(6)] $G =PQ$ is a minimal non-supersolvable group with $Q \unlhd G$, where $Q$ is elementary abelian of order $|Q| > q$, and $P$ is cyclic ,acting irreducibly on $Q$. 
\end{itemize}
\end{theorem}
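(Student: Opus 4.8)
The plan is to set up a minimal counterexample structure. Since $G$ is a non-PE-group but every proper subgroup is a solvable PNC-group, and solvable PNC-groups are PE-groups (indeed $\overline{\mathscr{T}}$-groups by Proposition \ref{NE}), $G$ is a \emph{minimal non-PE-group}. S.~Li already classified minimal non-PE-groups in \cite{SL}, so I would first invoke that classification to pin down the list of candidate isomorphism types for $G$, and simultaneously use solvability: by the Proposition whose part (1) states that if every proper non-nilpotent subgroup is a solvable PNC-group then $G$ is solvable, we get that $G$ is solvable, hence all proper subgroups are supersolvable $\mathscr{T}$-groups. This immediately forces strong local structure: every proper subgroup is a $\overline{\mathscr{T}}$-group, so all its nilpotent sections are Dedekind (Lemma \ref{nilpotent}).

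Next I would bound $|\pi(G)|$. The key leverage is that $G$ itself fails the PNC/PE property while every proper subgroup satisfies it, so $G$ must be quite small in terms of prime divisors. I would argue that if $|\pi(G)| \ge 3$, then one can find a proper subgroup (for instance a Hall $\{p,q\}$-subgroup, or a suitable maximal subgroup) that already witnesses non-PE-ness, contradicting minimality; combined with solvability this yields $|\pi(G)| \le 2$. The case $|\pi(G)| = 1$ ($G$ a $p$-group) leads to the Dedekind/quaternion analysis giving $D_4$ in (1) and the $p$-group presentations in (2) and (3): here I would use that every proper subgroup of the $p$-group is Dedekind but $G$ itself is not a PNC-group, so $G$ is a minimal non-Dedekind $p$-group, and these are well understood (maximal class or the specific extraspecial-type presentations listed).

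For $|\pi(G)| = 2$, write $\pi(G) = \{p, q\}$ with $p < q$. Using Proposition \ref{normal complement}, every proper subgroup is $p$-nilpotent (for the minimal prime), which pushes a normal Sylow structure onto proper subgroups and hence onto $G$ via a Frattini/transfer argument. I would then split according to whether $G$ is nilpotent (excluded, as nilpotent PNC fails only in the $p$-group case already handled), minimal non-nilpotent (Schmidt groups, giving case (5) with its subcases including $\mathrm{SL}(2,3)$), minimal non-supersolvable (case (6), with $Q$ normal elementary abelian and $P$ cyclic acting irreducibly, using that proper subgroups are supersolvable $\mathscr{T}$-groups), or supersolvable but not PNC (case (4), where $O_q(G) \ne 1$ and $Q$ is elementary abelian of rank $2$). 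In each subcase the structure of minimal non-nilpotent and minimal non-supersolvable groups (Doerk--Hawkes style results) determines $P, Q$ up to the stated presentations, and I would verify that the action data (irreducibility, the polynomial $f(x)$ dividing $x^{q^m}-1$ and irreducible over $\mathbb{F}_p$ in case (5)(i)) is exactly what is forced by requiring all proper subgroups, but not $G$, to be PNC.

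The hard part will be the case analysis in $|\pi(G)| = 2$: distinguishing (4), (5), (6) requires carefully tracking which minimality hypothesis ($G$ non-nilpotent vs.\ non-supersolvable vs.\ merely non-PE) is the operative obstruction, and then showing the action of $P$ on $Q$ (or $Q$ on $P$) must be irreducible with precisely the arithmetic constraints listed. In particular, reconstructing the companion-matrix presentation in (5)(i) and confirming the exceptional appearance of $\mathrm{SL}(2,3)$ in (5)(ii) will demand explicit computation with the module structure over $\mathbb{F}_p$, checking that the PNC-failure of $G$ is equivalent to the stated irreducibility of $f(x)$. I expect the $p$-group cases and the bound $|\pi(G)| \le 2$ to be comparatively routine once the minimal non-PE classification from \cite{SL} is in hand, and the two-prime module-theoretic analysis to be where the real work lies.
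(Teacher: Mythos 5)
Your core reduction coincides with the paper's: since every proper subgroup is a solvable PNC-group, Proposition \ref{NE}(10) makes every proper subgroup a PE-group, so $G$ is a minimal non-PE-group, and S.~Li's classification \cite[Theorem 2]{SL} supplies the entire candidate list at once --- in particular $|\pi(G)| \le 2$ and the $p$-group cases (1)--(3) are simply read off from that list, not re-derived. Where you diverge is in the refinement of the two-prime cases, and there the paper is much lighter than your plan. For case (5) the paper observes that a minimal non-nilpotent group whose nilpotent proper subgroups are all Dedekind (Lemma \ref{nilpotent}) has all maximal subgroups Dedekind, and then cites S.~Chen's classification \cite{SC} of such groups: this yields the companion-matrix presentation (5)(i) directly, and in the remaining inner $3$-closed case the non-elementary-abelian Sylow $2$-subgroup forces $m=1$, hence $|G|=24$ and $G \cong \mathrm{SL}(2,3)$. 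Your proposed hands-on reconstruction of the $\mathbb{F}_p$-module structure would duplicate Chen's theorem; it is workable but is not where the paper spends effort. Case (4)'s only new content is $O_q(G)\ne 1$, obtained from a one-line characteristic-subgroup argument inside a normal subgroup of prime index, and case (6) is quoted verbatim from Li. Two soft spots in your sketch deserve flagging, though neither is load-bearing given the citations: (a) your fallback argument excluding $|\pi(G)|\ge 3$ via a Hall $\{p,q\}$-subgroup ``witnessing non-PE-ness'' does not obviously work, since a failure $N_G(H)\cap H^G > H$ for a minimal subgroup $H$ of $G$ need not restrict to any proper Hall subgroup --- fortunately Li's list makes this unnecessary; and (b) the irreducibility of $f(x)$ in (5)(i) is not equivalent to the PNC-failure of $G$, as you suggest verifying --- it is equivalent to $G$ being minimal non-abelian and arrives prepackaged with Chen's classification, so that verification aims at the wrong target. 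Also note the theorem only asserts these forms as necessary; the paper verifies genuine examples only in cases (1)--(3), where it exhibits the minimal subgroup $\langle x\rangle$ failing the NE-condition and checks via \cite{SC} that all proper subgroups are abelian or Dedekind.
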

\begin{proof}
It follows from proposition \ref{NE} (10) that any proper subgroup of $G$ is a PE-group. By {{\cite[Theorem 2]{SL}}}, one of the following statements is true:
\begin{itemize}
\item[(1)] $G \cong D_4$.
\item[(2)] $G = \langle a,x : a^{p^n} = x^p =1,\,x^{-1} a x = a^{1+p^{n-1}} \rangle$, $p$ an odd prime, $n \geq 2$.
\item[(3)] $G = \langle a,b,x: a^{p^n} = b^p = x^p =1,\,[x,a]=b,\,[a,b] = [b,x] =1 \rangle$ for some prime $p$.
\end{itemize}
In the following (4)-(6), $P \in {\rm{Syl}}_p (G)$, $Q \in {\rm{Syl}}_p (G)$, $p < q$ are distinct prime numbers. 
\begin{itemize}
\item[(4)] $G = PQ$ is supersolvable, where $P$ is cyclic, $Q$ is elementary abelian of order $q^2$.
\item[(5)] $G =PQ$ is minimal non-nilpotent group with $P \unlhd G$, $P$ is elementary abelian and $Q$ is cyclic or $P$ is an ultraspecial 2-group and $|Q| =q$. 
\item[(6)] $G =PQ$ is a minimal non-supersolvable group with $Q \unlhd G$, where $Q$ is elementary abelian of order $|Q| > q$, and $P$ is cyclic ,acting irreducibly on $Q$. 
\end{itemize}

(1) One can easily find that $D_4$ is a non-PE-group, each of whose proper subgroup is a solvable PNC-group.

(2) We predicate that $\langle x \rangle$ is not an NE-subgroup of $G$. Since $a^{-1} x = x a^{-1-p^{n-1}}$, it follows that 
$$x^{a^i} = a^{-i+1} ( a^{-1} x a ) a^{i-1} = a^{-i+1} x a^{-p^{n-1}} a^{i-1} = \cdots = xa^{-ip^{n-1}}.$$
By the choice of $i$ we indicate that $\langle a^{p^{n-1}} \rangle \leq \langle x \rangle ^G$. As $x^{a^p} = x a^{-p^n} = x$, we conclude that $N_G (\langle x \rangle) \geq \langle a^p \rangle$. Hence we have $N_G (\langle x \rangle)  \cap \langle x \rangle ^G \geq \langle a^p \rangle \langle x \rangle > \langle x \rangle$. Thus the minimal subgroup $\langle x \rangle$ is not a NE-subgroup of $G$. Therefore $G$ is not a PE-subgroup. By {{\cite[Theorem (4) 2)]{SC}}} we indicate that $G$ is a minimal non-abelian group, which implies that any proper subgroup of $G$ is abelian. Hence any proper subgroup of $G$ is a solvable PNC-group.

(3) We predicate that $\langle x \rangle$ is not an NE-subgroup of $G$. Since $x ^a = xb$, we have $\langle b \rangle \leq \langle x \rangle ^G$. As $\langle b \rangle \leq Z(G) $, we conclude that $N_G (\langle x \rangle) \cap \langle x \rangle ^G \geq \langle b \rangle \langle x \rangle > \langle x \rangle$, so the result holds. Again by {{\cite[Theorem (4) 1)]{SC}}}, we conclude that all proper subgroups of $G$ are Dedekind groups. Hence any proper subgroup of $G$ is a solvable PNC-group.

(4) Since $G$ is solvable, there exists a normal subgroup of $U$ such that $|G / U|$ is a prime. It follows from $q \mid |U|$ that $U$ has non-trivial $q$-part. Let $ Q_1$ be a Sylow $q$-subgroup of $U$. As $U$ is $p$-nilpotent, we have $Q_1 ~{\rm{char}}~ U \unlhd G$. Thus we conclude that $O_q (G) \neq 1$.

(5) Since $G$ is minimal non-nilpotent group, it follows from Lemma \ref{nilpotent} that $G$ is a minimal non-Dedekind group. As $\pi (G) =2$, by {{\cite[Theorem (2)(3)]{SC}}}, we indicate that $G$ is isomorphic to one of the following groups:
\begin{itemize}
\item[(i)] $G$ is a minimal non-abelian group of order $q^m p^n$, and we have
\begin{align*}
G=&\langle a,b_1,b_2, \cdots,b_n|1=a^{q^m}=b_1 ^p=b_2 ^p= \cdots =b_n ^p,\,b_i b_j =b_j  b_i,\,i,j =1,2,\cdots,n,\\
& b_i ^a =b_{i+1},\,i,j =1,2,\cdots,n-1,b_n ^a=b_1 ^{d1} b_2 ^{d_2} \cdots b_n ^{d_n}  \rangle,
\end{align*}
where $f(x) =x^n-d_n x^{n-1}- \cdots -d_2 x-d_1$ is irreducible  on $F_p$, which is a factor of $ x^{q^m}-1$.
\item[(ii)] $G$ is an inner 3-closed group, and we have
$$G=\langle a,b,c|a^{3^m}=b^4=c^4=1,b^2=c^2,cb=b^{-1}c,a^{-1}ba=c,a^{-1}ca=cb \rangle.  $$
\end{itemize}
For (ii), since the Sylow 2-subgroup of $G$ is not elementary abelian, we conclude that $m=1$, i.e. $|G|=24$. It follows directly from enumeration that only ${\rm SL}(2,3)$ satisfies the condition.

(6) It follows directly from {{\cite[Theorem 2 (f)]{SL}}}.
\end{proof}
\begin{theorem}
If $G$ is a finite non-PE-group, each of whose proper subgroup is an ON-group, then one of the following statements is true:
\begin{itemize}
\item[(1)] $G$ is a minimal non-abelian group of order $q^m p^n$, and we have
\begin{align*}
G=&\langle a,b_1,b_2, \cdots,b_n|1=a^{q^m}=b_1 ^p=b_2 ^p= \cdots =b_n ^p,\,b_i b_j =b_j  b_i,\,i,j =1,2,\cdots,n,\\
& b_i ^a =b_{i+1},\,i,j =1,2,\cdots,n-1,b_n ^a=b_1 ^{d1} b_2 ^{d_2} \cdots b_n ^{d_n}  \rangle,
\end{align*}
where $f(x) =x^n-d_n x^{n-1}- \cdots -d_2 x-d_1$ is irreducible  on $F_p$, which is a factor of $ x^{q^m}-1$.
\item[(2)] $G = \langle a,x : a^{p^n} = b^p =1,\,x^{-1} a x = a^{1+p^{n-1}} \rangle$, $p$ an odd prime, $n \geq 2$.
\item[(3)] $G = \langle a,b,x: a^{p^n} = b^p = x^p =1,\,[x,a]=b,\,[a,b] = [b,x] =1 \rangle$ for some prime $p$.
\item[(4)]  $G =PQ$ is a minimal non-supersolvable group with $Q \unlhd G$, where $Q \in {\rm{Syl}}_q (G)$ is elementary abelian of order $|Q| > q >p$, and $P \in {\rm{Syl}}_p (G)$ is cyclic ,acting irreducibly on $Q$.
\item[(5)] $ G = \langle a, b, c : a^p = b^p = 1, c^{q^n}= 1, c^{-1}ac = a^r, [b, a] = [b, c] = 1,r \not\equiv 1 \,(\!\!\!\mod p), r^q \equiv 1 \,(\!\!\!\mod p) \rangle$, where $p >q$ are two primes.
\item[(6)] $G \cong {\rm SL}(2,3)$.
\end{itemize}
\end{theorem}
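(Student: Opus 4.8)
The plan is to bootstrap from Theorem \ref{classification}. Since every ON-group is a PNC-group (as noted right after the definition of ON-groups), the hypothesis already gives that every proper subgroup of $G$ is a PNC-group; the point is to upgrade this to \emph{solvable} PNC-group, after which Theorem \ref{classification} forces $G$ into its six families and it only remains to test the stronger ON-condition on each. So first I would prove $G$ is solvable. If not, pick a minimal nonsolvable subgroup $K\le G$; then all proper subgroups of $K$ are solvable, so $K/\Phi(K)$ is a minimal simple group, which by Thompson's classification is one of $\mathrm{PSL}(2,2^{p})$, $\mathrm{PSL}(2,3^{p})$, $\mathrm{PSL}(2,p)$, $\mathrm{Sz}(2^{p})$, $\mathrm{PSL}(3,3)$. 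Each of these contains a proper Frobenius subgroup $F=E\rtimes C$ with $C$ acting irreducibly and fixed-point-freely on a non-cyclic abelian group $E$ (the normalizer of a suitable Sylow subgroup, or the centre of its kernel in the Suzuki case); then a proper nontrivial $K_0<E$ gives $N_F(K_0)\supseteq E\supsetneq K_0$, $K_0^{F}\le E\subsetneq F$ and $K_0\ntrianglelefteq F$, so $F$ is not an ON-group. In the one remaining case $\mathrm{PSL}(2,p)$, where the relevant kernel is $C_p$ of rank one, a copy of $A_4$ plays the same role through one of its involutions. As $F$ (resp.\ $A_4$) is a proper subgroup of $G$, this contradicts the hypothesis; hence $G$ is solvable and Theorem \ref{classification} applies.

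Next I would sift the six families through the ON-condition. In the $p$-group families (cases (2) and (3), the latter yielding $D_4$ when $p=2,\ n=1$) and in the minimal non-abelian group of order $q^{m}p^{n}$ (case (5)(i)) every proper subgroup is abelian, while in $\mathrm{SL}(2,3)$ (case (5)(ii)) every proper subgroup is cyclic or the Hamiltonian group $Q_8$; in all of these every proper subgroup is a Dedekind group, hence trivially an ON-group, and they pass unchanged to produce conclusions (2), (3), (1) and (6). For the minimal non-supersolvable family (case (6)) I would check directly that the subgroups of the irreducible $\mathbb{F}_q$-module $Q$ and the subgroups $Q\rtimes P_1$ with $P_1\le P$ each satisfy one alternative of the ON-definition, so that this family passes to conclusion (4).

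The genuinely new work sits in the supersolvable family (case (4)), where $G=PQ$ with $Q$ elementary abelian of rank $2$ and the cyclic Sylow subgroup acting reducibly; relabelling so that the larger prime carries the rank-two factor, this is $G=(C_p\times C_p)\rtimes C_{q^{n}}$ with $q<p$. The invariant line decomposes $Q=\langle a\rangle\times\langle b\rangle$ into $C_{q^{n}}$-submodules, and I would impose the ON-condition on the one-parameter families of subgroups projecting nontrivially onto both lines, together with subgroups of the form $\langle a\rangle\rtimes\langle c^{q}\rangle$, tracking $N_G(K)$, $K^{G}$ and normality simultaneously. This should force the action to be trivial on one line and of order exactly $q$ on the other, i.e.\ $c^{-1}ac=a^{r}$, $c^{-1}bc=b$ with $r\not\equiv 1$ and $r^{q}\equiv 1\pmod p$, which is precisely conclusion (5). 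I expect this to be the main obstacle: deciding which reducible actions make \emph{all} proper subgroups ON-groups (rather than merely PNC-groups) requires a careful bookkeeping that the distinct-prime machinery of Section \ref{10004}, in particular Theorem \ref{3}, does not cover, since here the two composition factors of $Q$ share the prime $p$. Finally I would verify the converse, that each group in (1)--(6) is indeed a non-PE-group all of whose proper subgroups are ON-groups, which closes the classification.
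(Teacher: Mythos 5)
Your overall architecture (solvability via Thompson's minimal simple groups, then sifting the six families of Theorem \ref{classification} through the ON-condition) is genuinely different from the paper's route: the paper observes that $G$ non-PE forces $G$ non-NSN, treats the proper subgroups as NSN-groups, and imports both solvability and a thirteen-type list from the classification of minimal non-NSN-groups in \cite{NSN}, so that family (4) of Theorem \ref{classification} is never analysed directly --- the only real computation there is matching type (11) of that list against conclusion (5). Your solvability step is workable but has a fixable slip: the non-ON Frobenius subgroup $F$ you construct lives in the quotient $K/\Phi(K)$, not in $G$, so it is not literally ``a proper subgroup of $G$''; you must pass to its full preimage $\hat F$, which \emph{is} proper in $G$, hence an ON-group by hypothesis, and then use that quotients of ON-groups are ON-groups (immediate from $N_{G/N}(H/N)=N_G(H)/N$ and $(H/N)^{G/N}=H^G/N$, exactly as computed in Proposition \ref{factor group}).

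The genuine gap sits exactly where you flagged the main obstacle: the sift of family (4). You assert the ON-condition ``should force the action to be trivial on one line and of order exactly $q$ on the other'', but this is false, and no bookkeeping will rescue it. Take $G=(\langle a\rangle\times\langle b\rangle)\rtimes\langle c\rangle$ with $o(a)=o(b)=p$, $o(c)=q<p$, $c^{-1}ac=a^{r}$, $c^{-1}bc=b^{s}$, where $r\neq s$ are two \emph{distinct} elements of multiplicative order $q$ modulo $p$ (e.g.\ $p=7$, $q=3$, $r=2$, $s=4$). This group is supersolvable and lies in family (4), and its proper subgroups are precisely cyclic groups, $\langle a,b\rangle\cong C_p\times C_p$, and Frobenius groups $C_p\rtimes C_q$ (the only $\langle c^{w}\rangle$-invariant lines of $\langle a,b\rangle$ are $\langle a\rangle$ and $\langle b\rangle$); each of these is an ON-group, since in $C_p\rtimes C_q$ every subgroup is normal or self-normalizing. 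Yet $G$ is not a PE-group: for the minimal subgroup $H=\langle ab\rangle$ one has $H^G=\langle ab,\ a^{r}b^{s}\rangle=\langle a,b\rangle$ because the vectors $(1,1)$ and $(r,s)$ are independent mod $p$, while $N_G(H)\geq\langle a,b\rangle$, so $N_G(H)\cap H^G=\langle a,b\rangle>H$. Thus $G$ satisfies the hypothesis of the theorem but is not minimal non-abelian, does not have $b$ central as conclusion (5) requires, and fits none of the other conclusions: your sift cannot terminate at (5), and carried out honestly it would uncover this additional family rather than prove the stated list. (This also shows the statement itself is problematic and pinpoints where the paper's own argument is thin: types (7)--(10) of the list from \cite{NSN} are dismissed with an unverified disjunction, which is precisely where such groups hide, and the distinct-prime machinery of Section \ref{10004}, as you correctly note, does not apply since both eigenlines carry the same prime.) As written, then, the decisive step of your proposal is both unexecuted and aimed at a false target, so the proof does not go through.
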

\begin{proof}
Since $G$ is not a PE-group, $G$ is not an NSN-group. By the definition of ON-group, one can easily find that any proper subgroup of $G$ is a PNC-group, and an NSN-group as well. It follows from {{\cite[Main Theorem]{NSN}}} that $G$ is solvable. It follows from {{\cite[Main Theorem]{NSN}}} that $G$ is isomorphic to one of 13 kinds of groups. By Theorem \ref{classification}, $G$ is isomorphic to one of six kinds of groups. Now we finish the proof by enumeration on the 13 kinds of groups in {{\cite[Main Theorem]{NSN}}}.

(1) It is (5) in Theorem \ref{classification}.

(2) Since this group is generated by three elements, it is (3) in Theorem \ref{classification} with $n=1$. 

(3) Since this $p$-group is generated by two elements, it is (2) in Theorem \ref{classification} with $n=1$.

(4) It follows from the proof of (5)(ii) in Theorem \ref{classification} that only $G = {\rm SL}(2,3)$ satisfies the condition.

(5) This is the generalized quaternion group. But $Q_{16}$ is a PE-group, failing to satisfy the condition.

(6) It is (6) in Theorem \ref{classification}.

(7) It is the group of order $p^i q^j$. If it is isomorphic to (4) in Theorem \ref{classification}, then it contradicts with the fact that $Q_8$ is not abelian. It follows that either it is isomorphic to (5),(6) in Theorem \ref{classification}, or it is not a non-PE-group, each of whose proper subgroup is an ON-group.

(8) It is the group of order $p^i q^j$. Since $G$ has non-cyclic Sylow $q$-subgroup, $q <p$, it follows that $G$ is not isomorphic to (4) in Theorem \ref{classification}.Therefore we conclude that either it is isomorphic to (5),(6) in Theorem \ref{classification}, or it is not a non-PE-group, each of whose proper subgroup is an ON-group.

(9) It is the group of order $p^i q^j$. Applying the same argument  used in (8), one can conclude that either it is isomorphic to (5),(6) in Theorem \ref{classification}, or it is not a non-PE-group, each of whose proper subgroup is an ON-group.

(10) It is the group of order $p^i q^j$. Since the order of the Sylow $p$-subgroup of $G$ is  equal to $p \neq p^2$ with $p >q$, 
 applying the same argument used in (8), one can indicate that either it is isomorphic to (5),(6) in Theorem \ref{classification}, or it is not a non-PE-group, each of whose proper subgroup is an ON-group.

(12)(13) It is the group of order $p^a q^b r^c$ with $|\pi (G)| \geq 3$, which is not isomorphic to (1)-(6) in Theorem \ref{classification}.

(11) We predicate that this group is a special case of (4) in Theorem \ref{classification}, and $G$ is a non-PE-group, each of whose proper subgroup is an ON-group. One can easily find that $\langle b \rangle \leq Z(G)$, hence we have $N_G (\langle ab \rangle) \geq \langle b \rangle$. As $(ab )^c =a^r b \in  \langle ab \rangle^G$, we indicate that $a^{r-1} \in \langle ab \rangle^G$. It follows from $r \not\equiv 1 \,(\!\!\!\mod p)$ that $a \in \langle ab \rangle^G$, therefore we have $b \in \langle ab \rangle^G$, i.e. $\langle b \rangle \leq N_G (\langle ab \rangle) \cap \langle ab \rangle^G$. Hence $\langle ab \rangle$ is not an NE-subgroup of $G$. On the other hand, we have
$$c^{-q} a c^q = c^{-q +1} a^r c^{q -1} = \cdots = a^{r^q}=a.$$
Thus we indicate that $\langle c^q \rangle \leq Z(G)$. Now let $U < G$. Since any element $g$ of $G$ can be uniquely represented as $g = a^i b^j c^k$, if $q|k$ holds for any $g \in U$, then we conclude that $U$ is abelian. Otherwise there exists $g = a^i b^j c^k \in U$ such that $q \nmid k$. We predicate that $o(a^i c^k)=q^n$, if $(k,q)=1$. As a matter of fact, it follows from $c^{-k} a^i c^k =c^{-k+1} a^{ri} c^{k-1}  = \cdots = a^{i r^k}$ that
$$a^i c^k a^i c^k = a^i c^{2k} a^{i r^k} = c^{2k} c^{-2k} a^i c^{2k} a^{i r^k} = c^{2k} a^{i r^{2k} +i r^k}. $$
Induction on $n$ gives that $(a^i c^k)^{m} = c^{mk} a^{i r^{mk} +i r^{(m-1)k} + \cdots +i r^k}$, thus we conclude that 
$$(a^i c^k)^{m} = c^{mk} a^{i r^{mk} +i r^{(m-1)k} + \cdots +i r^k} =c^{mk} a^{\frac{r^k(r^{mk} -1)}{r^k -1}i} =c^{mk} a^{\frac{r^k((r^{k})^m -1)}{r^k -1}i} .  $$
Let $m=q$, it follows that 
$$(a^i c^k)^{q} =  c^{qk} a^{\frac{r^k((r^{k})^q -1)}{r^k -1}i} .$$
If $r^k \equiv 1 \,(\!\!\!\mod p)$, then we get that $r^{(q,k)} \equiv 1 \,(\!\!\!\mod p)$. Since $(k,q)=1$, we indicate that $r \equiv 1 \,(\!\!\!\mod p)$, a contradiction. Hence $p \nmid r^k -1$, it follows that 
$$v_p \left( \frac{r^k((r^{k})^q -1)}{r^k -1}i \right) = v_p ((r^{k})^q -1) = v_p ((r^{q})^k -1) \geq v_p (r^{q} -1) \geq 1.$$
Thus $(a^i c^k)^{q}  = c^{qk}$, which implies that $o(a^i c^k) = q^n$. If $(j,p)=1$, then we indicate that $o(g) =o( a^i b^j c^k)= p q^n$, which forces $U = \langle  a^i b^j c^k \rangle$. Hence $U$ is a PNC-group. Otherwise $p|j$, i.e. $g = a^i c^k$ with $o(a^i c^k) = q^n$. If $U = \langle  a^i  c^k \rangle$, then $U$ is a PNC-group. If $U > \langle  a^i  c^k \rangle$, then we conclude that $|U| = q^n p$. Since $G$ is $p$-closed, we may assume that $U = \langle a^u b^v \rangle \langle  a^i  c^k \rangle$. If $p|u$, then $U$ is abelian. Otherwise $p \nmid u$, without loss of generality, $u=1$. It follows that $o(a b^v) = p$. Since $b^{-iv} a^{-i} a^i  c^k = c^k b^{-iv} \in U$, we indicate that $(c^k b^{-iv})^p = c^{kp}$, i.e. $\langle c \rangle \leq U$. Therefore we have $U =  \langle a b^v \rangle \langle  c \rangle$. As $(a b^v)^c = a^c b^v = a^r b^v \in U$, we get that $a^{r-1} \in U$. It follows from $p \nmid r-1$ that $\langle a \rangle \leq U$. Hence we conclude that $U = \langle a  \rangle \rtimes \langle  c \rangle$. Now we predicate that $U$ is a PNC-group. Let $H$ be a subgroup of $U$. If $H$ is a $p$-group, then clearly $H$ is an NE-subgroup of $U$. If $H$ is a $q$-group, without loss of generality we may assume that $H = \langle c^l \rangle \leq \langle c \rangle,\,l = q^t$. One can easily find that $N_U (H) \geq \langle c \rangle$. Induction on $n$ gives that 
$$a^{-1} c^n a = c^n a^{1-r^n} .$$
Hence we have $(c^l)^a = c^l  a^{1-r^{l}}= c^l a^{1-r^{q^t}}$. If $t=0$, then $H =\langle c \rangle$, an NC-group of $U$. If $t >0$, it follows that $p | 1-r^{q^t}$. Hence we conclude that $(c^l)^a = c^l$, which implies that $\langle a \rangle \leq N_U (H)$. Thus we indicate                                                      that $N_U (H) = U$, i.e. $H \unlhd U$. If $H$ is neither a $q$ group nor a $p$ group, without loss of generality, $H = \langle a \rangle \langle c^l \rangle,\,l = q^t$. It follows directly that $\langle c \rangle \leq N_U (H)$, therefore $N_U (H) = U$, i.e. $H$ is an NC-group of $U$. By the randomness of $H$, $U$ is a PNC-group. By the choice of $U$, we conclude that (11) is a special case of (4) in Theorem \ref{classification}.
\end{proof}
\begin{theorem}
Let $G$ be a finite group. Then $G$ is an ON-group if and only if $G$ is a Dedekind group or or $G$ satisfies the following conditions:
\begin{itemize}
\item[(1)] $G = P_1 \cdot P_2 \cdots P_{n} \cdot P$, where $P_1,P_2,\cdots,P_n$ are normal abelian Sylow $p_i$-subgroups of $G$, and $P = \langle x \rangle$ is a cyclic Sylow $p$-subgroup with $\langle x^p \rangle = O_p (G)$.
\item[(2)] $N_G (P) = P$.
\item[(3)] Let $H_1 :=P_1 \cdot P_2 \cdots P_{n}$. Then for any $1 \neq w \in H_1$, $w^x = w^m$, where $(m,o(w))=(m-1,o(w))=1$.
\end{itemize}
\end{theorem}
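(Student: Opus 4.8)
The plan is to prove both implications, using throughout that an ON-group is simultaneously a PNC-group and an NSN-group, and that a Dedekind group is trivially an ON-group (every subgroup is normal). So I would isolate the Dedekind case at once and concentrate on the structured groups of (1)--(3).

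For \emph{sufficiency}, suppose $G$ satisfies (1)--(3) and write $H_1 = P_1\cdots P_n$, $P=\langle x\rangle$. Conditions (1) and (3) put $G = H_1\rtimes\langle x\rangle$ in exactly the shape required by Theorem \ref{sufficiency} (with $A=H_1$ abelian Hall and $D=\langle x\rangle$ cyclic, hence Dedekind); since $\langle x^p\rangle = O_p(G)$ is normal of order coprime to $H_1$ we get $[x^p,H_1]=1$, and one verifies the coprimality alternative of Theorem \ref{sufficiency} persists for every power of $x$. Thus $G$ is a PNC-group and, crucially, for every subgroup $H$ and every $a\in H_1$ either $a\in N_G(H)$ or $a\in H^G$. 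As $H_1\cap N_G(H)$ and $H_1\cap H^G$ are subgroups whose union is $H_1$, one of them is all of $H_1$; that is, $H_1\le N_G(H)$ or $H_1\le H^G$. I would then dispatch the two cases: if $H_1\le H^G$ and $H^G\neq G$, then $H^G = H_1(H^G\cap\langle x\rangle)\le H_1 O_p(G) = H_1\times O_p(G)$, whence $H = (H\cap H_1)\times(H\cap O_p(G))$ with both factors normal in $G$ (subgroups of $H_1$ are normal by (3), subgroups of the cyclic $O_p(G)$ are characteristic), so $H\unlhd G$; if instead $H_1\le N_G(H)$, then either the Sylow $p$-part of $H$ lies in $O_p(G)$ and the same splitting gives $H\unlhd G$, or $H$ contains a full Sylow $p$-subgroup and $N_G(H)\supseteq H_1\langle x\rangle = G$. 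In every case $H$ is normal or $H^G=G$, so $G$ is an ON-group.

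For \emph{necessity}, assume $G$ is an ON-group that is not Dedekind. The first and, I expect, most delicate step is the reduction to the solvable case: nonabelian simple groups are a genuine obstruction here, since every proper nontrivial subgroup of a simple group has full normal closure, so they are ON-groups outside the stated list, and a separate argument (or the standing solvability coming from the ambient NSN-classification) is needed to exclude them. Granting solvability, Proposition \ref{NE}(2) gives $G = L\rtimes M$ with $L=[G,G']$ an abelian normal Hall subgroup and $M$ Dedekind. I would then use NSN to pin down $M$: each proper subgroup $K$ of $M$ is normal in the Dedekind group $M$, so $M\le N_G(K)$ and $N_G(K)\neq K$, forcing $K\unlhd G$. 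If $M$ were noncyclic or divisible by two primes it would be generated by such proper subgroups, so $M\unlhd G$ and $G = L\times M$ would be Dedekind, a contradiction. Hence $M=\langle x\rangle$ is cyclic of prime-power order $p^a$, $L=P_1\cdots P_n$ is the product of the characteristic (hence normal) abelian Sylow subgroups of $L$, and (1) follows. The same NSN argument applied to $\langle x^p\rangle\le\langle x\rangle$ shows $\langle x^p\rangle\unlhd G$ while $\langle x\rangle\ntrianglelefteq G$ (otherwise $G=L\times\langle x\rangle$ is Dedekind), which together give $O_p(G)=\langle x^p\rangle$.

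It remains to extract (2) and (3), and the one structural computation I would carry out is the normalizer and normal closure of $P=\langle x\rangle$. Writing $d(h)=h^{-1}h^{x^{-1}}$ for $h\in L$, the map $d$ is a homomorphism $L\to[L,x]$, and one checks $P^G=[L,x]\rtimes\langle x\rangle$ and $N_G(P)\cap L = C_L(x)$; moreover $O_p(G)=\langle x^p\rangle$ forces $x^p$ to centralize $L$, so the coprime action splits $L = C_L(x)\times[L,x]$. Since $P$ is not normal, the ON property forces $N_G(P)=P$ or $P^G=G$; but $N_G(P)=C_L(x)\langle x\rangle$ and $P^G=G$ is equivalent to $[L,x]=L$, i.e. to $C_L(x)=1$, so in either case $C_L(x)=1$. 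This yields (2) at once, and (3) follows because $G$ is a $\mathcal{T}$-group by Lemma \ref{T} (so $x$ acts as a power automorphism, giving $w^x=w^m$ with $(m,o(w))=1$), while $C_L(x)=1$ says $x$ fixes no nontrivial element of any $\langle w\rangle$, which is precisely $(m-1,o(w))=1$. The main obstacle throughout is thus the solvability reduction; once it is in hand, the remainder is disciplined bookkeeping of which subgroups of $L$, $O_p(G)$ and $\langle x\rangle$ are normal, anchored by the single computation of $P^G$ and $N_G(P)$.
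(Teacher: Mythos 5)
Your necessity argument mishandles the definition of an ON-group, and this matters. The paper's definition is conjunctive: every subgroup $H$ is either normal or satisfies \emph{both} $N_G(H)=H$ and $H^G=G$ --- this is how the paper uses it in its own proof (``Since $N_G(P)=P$, by the definition of ON-group we have $P^G=G$''), and how you yourself use it when you force proper subgroups $K$ of $M$ to be normal from $N_G(K)\neq K$. Under that reading your claim that nonabelian simple groups ``are ON-groups outside the stated list'' is false: in $A_5$ the subgroup $\langle (12)(34)\rangle$ has full normal closure but is not self-normalizing (its normalizer contains the Klein four-group), so $A_5$ is not an ON-group. Consequently the ``delicate'' solvability reduction you worry about is a non-issue: ON implies NSN on the spot, and the paper simply quotes the NSN structure theorem, which delivers not only solvability but the entire frame (abelian normal subgroup of prime index $p$, cyclic self-normalizing Sylow $p$-subgroup $P=\langle x\rangle$, power action of $x$). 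Had your disjunctive reading been correct, the theorem itself would be false, since simple groups satisfy neither alternative of the conclusion. Granting this, the rest of your necessity is sound and is genuinely different from the paper's: you rebuild the frame from Proposition \ref{NE}(2) plus the ON-dichotomy on $M$, and you obtain $(m-1,o(w))=1$ structurally, via $[x^p,L]=1$, the coprime splitting $L=C_L(x)\times[L,x]$, and the observation that non-normality of $P$ forces $C_L(x)=1$ either way; the paper instead derives this by a long explicit exponent computation inside $P^G$ using $P^G=G$. Your route is cleaner there.

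The same definitional slip leaves a genuine gap in your sufficiency: you conclude ``in every case $H$ is normal or $H^G=G$, so $G$ is an ON-group'', but you never show $N_G(H)=H$ for the non-normal $H$. What you have proved at that point is only the PNC-type conclusion. The paper avoids this by first verifying that (1)--(3) imply the NSN conditions (every subgroup of $H_1\langle x^p\rangle$ is normal in $G$, the Sylow $p$-subgroup is cyclic and self-normalizing), so non-normal subgroups are automatically self-normalizing, and only then computing $P^G=G$. Your framework can be patched along the same lines: a non-normal $H$ must contain a full Sylow $p$-subgroup (any $p$-element outside $O_p(G)=\langle x^p\rangle$ generates one, $O_p(G)$ being the unique maximal subgroup of each cyclic Sylow), so WLOG $x\in H$, and Dedekind gives $N_G(H)=(N_G(H)\cap H_1)\langle x\rangle$; for $w\in N_G(H)\cap H_1$ one has $x^w=xw^{1-m}\in H$, hence $w^{1-m}\in H\cap H_1$, and $(m-1,o(w))=1$ forces $w\in H$, so $N_G(H)=H$. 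But as written this step is absent. (Your flagged verification that the hypothesis of Theorem \ref{sufficiency} persists for all powers $x^k$ is fine once spelled out: $[x^p,H_1]=1$ since $\langle x^p\rangle$ and $H_1$ are normal of coprime order, and a prime dividing $(m^k-1,o(w))$ with $p\nmid k$ would yield a nontrivial element of $H_1$ fixed by $x$, contradicting (3).)
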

\begin{proof} The proof is proceeded via the two parts. 
\begin{itemize}
\item[(i)] Necessity of the proof.
\end{itemize}
Since $G$ is an ON-group, we conclude that $G$ is an NSN-group. By  {{\cite[Theorem 13]{MA}}}, $G$ is a Dedekind group or satisfies the following conditions:
\begin{itemize}
\item[(1')] $G$ contains an abelian normal subgroup $H$ of prime index $p$.
\item[(2')] a Sylow $p$-subgroup $P$ of $G$ is cyclic and self-normalizing in $G$.
\item[(3')] Let $P  =\langle x \rangle$. Then $w^x = w^m$ holds for any $w \in H$, where $m$ is an integer.
\end{itemize}
It is clear that $G = HP$. Let $H = P_1 \cdot P_2 \cdots P_{n} \cdot P^{*}$, where $P_i,i=1,2,\cdots,n$ are Sylow $p_i$-subgroups of $H$, and $P^{*}$ is a Sylow $p$-subgroup of $H$. It follows directly from (1'),(3') that $P^{*} = P \cap H = \langle x^p \rangle = O_p (G)$, and any subgroup of $H$ is normal in $G$. Therefore we indicate that $G= P_1 \cdot P_2 \cdots P_{n} \cdot P$. For any $w \in H$, since $w^x = w^m$, we indicate that $(m,o(w))=1$. Now let $H_1 :=P_1 \cdot P_2 \cdots P_{n}$. Suppose that there exists $w \in H_1$ such that $(m-1,o(w)) \neq 1$. It is clear that $m \not\equiv 1\,(\!\!\!\mod o(w)) $, thus we may assume that $p_i | (m-1,o(w))$. It follows from $x^{-1} w^{-1} x = w^{-m}$ that $x^w = x w^{-m+1}$. Since $N_G (P) = P$, by the definition of ON-group we have $P^G = G$. Let $w = g_1 g_2 \cdots g_n$, $g_j \in P_j$. It follows from $p_i |o(w)$ that $g_i \neq 1$. A trivial observation gives that $x^{w_1 w_2} = (x w_1 ^{-m_1 +1})^{w_2} = x w_1 ^{-m_1 +1} w_2 ^{-m_2 +1} $. Hence for $x^{g_j} = x g_j ^{-m_j +1}$, $j=1,2,\cdots,n$, we have 
$$x^w = x w^{-m+1} = x (g_1 \cdots g_n)^{-m+1} = x g_1 ^{-m_1 +1} g_2 ^{-m_2 +1} \cdots g_n ^{-m_n +1} . $$
Therefore we conclude that $0 \equiv m-1 \equiv m_i-1 \,(\!\!\!\mod p_i)$. Now let $P_i = C_{1,i} \times C_{2,i} \times \cdots \times C_{t_i,i},\,C_{j,i} = \langle h_{j,i} \rangle,\,j=1,2,\cdots,t_i$, $g_i = h_{1,i} ^{u_1}  h_{2,i} ^{u_2} \cdots h_{t_i,i} ^{u_i}$, and $h_{j,i}^x = h_{j,i} ^{m_{j,i}},\,j=1,2,\cdots,t_i$. It follows that 
$$x^{g_i} = x h_{1,i} ^{u_1 (-m_{1,i}+1)} h_{2,i} ^{u_2 (-m_{2,i}+1)} \cdots h_{t_i,i} ^{u_{t_i} (-m_{t_i,i}+1)} = x g_i ^{-m_i +1}  = x h_{1,i} ^{u_1 (-m_{i}+1)} h_{2,i} ^{u_2 (-m_{i}+1)} \cdots h_{t_i,i} ^{u_{t_i} (-m_{i}+1)} .$$
Without loss of generality we may assume that $v_{p_i} (u_1) < v_{p_i} (o(h_{1,i}))$, which implies that $h_{1,i} ^{u_1} \neq 1$.   Then we indicate that $0 \equiv u_1 (m_{1,i}-1) \equiv u_1 (m_i-1) \,(\!\!\!\mod o(h_{1,i}))$. Hence we have $p_i | m_{1,i}-1$. We predicate that $h_{1,i} \notin P^G$. For any $w = g_1 g_2 \cdots g_{i-1} h_{1,i} ^{u_1}  h_{2,i} ^{u_2} \cdots h_{t_i,i} ^{u_{t_i}} g_{i+1} \cdots g_n$, where $g_j \in P_j,\,u_j \in \mathbb{N}$, we conclude that
$$x^w  = x g_1 ^{-m_1 +1} g_2 ^{-m_2 +1} \cdots g_{i-1} ^{-m_{i-1} +1} h_{1,i} ^{u_1 (-m_{1,i}+1)}h_{2,i} ^{u_2 (-m_{2,i}+1)} \cdots h_{t_i,i} ^{u_{t_i} (-m_{t_i,i}+1)} g_{i+1} ^{-m_{i+1} +1} \cdots g_{n} ^{-m_{n} +1} .$$
Induction on $q \in \mathbb{N}$ gives that 
$$(x^{q})^{w} = x^q g_1 ^{(-m_1 ^{q} +1)} g_2 ^{(-m_2 ^{q} +1)} \cdots g_{i-1} ^{(-m_{i-1} ^{q} +1)} h_{1,i} ^{ u_1 (-m_{1,i}  ^{q}+1)}h_{2,i} ^{u_2 (-m_{2,i}  ^{q}+1)} \cdots h_{t_i,i} ^{ u_{t_i} (-m_{t_i,i}  ^{q}+1)} g_{i+1} ^{(-m_{i+1} ^{q} +1)} \cdots g_{n} ^{(-m_{n} ^{q} +1)} .$$
Also, for any $(x^{q_1})^{w_1},(x^{q_2})^{w_2} \in P^G$, it follows that 
$$(x^{q_1})^{w_1} (x^{q_2})^{w_2} = x^{q_1} w_1 ^{'} x^{q_2} w_2 ^{'} = x^{q_1 +q_2} \left( {w_1 ^{'}}\right)^{s_1 ^{q_2}}w_2 ^{'}, ~\text{where}~ \left( {w_1 ^{'}}\right)^{x} = \left( {w_1 ^{'}} \right)^{s_1}, s_1 \in \mathbb{N}.$$
We conclude from the two identities above that for any $x \in P^G$, the exponent of $h_{1,i}$ in the unique factorization induced by $G = P_1 \cdot P_2 \cdots P_{n} \cdot P$ is a multiple of $p_i$. Therefore we have $h_{1,i} \notin P^G$, a contradiction to the fact that $P^G = G$. Thus we indicate that $(o(w),n-1) =1$ holds for any $1 \neq w \in H_1$.
\begin{itemize}
\item[(ii)] Sufficiency of the proof.
\end{itemize}
If $G$ is a Dedekind group or satisfies (1)-(3), the last case suggests that $H:= H_1 \langle x^p \rangle$ is a normal abelian subgroup of $G$, and (1'),(2') holds. One can easily find from (3) and $O_p (G) = \langle x^p \rangle$ that every subgroup of $H$ is normal in $G$, hence (3') holds. Therefore $G$ is an NSN-group. Let $U$ be a subgroup of $G$. If $U \ntrianglelefteq G$, without loss of generality, $P \in {\rm{Syl}}_p (U)$. It suffices to prove that $P^G = G$. In fact, for any $1 \neq w \in H_1$, it follows from $w^x = w^m$ that $x^w = x w^{-m+1}$. By $(m-1,o(w))=1$, we conclude that $H \leq P^G$. Thus $G = HP \leq P^G$ and we indicate that $G$ is a PNC-group.
\end{proof}

\begin{theorem}
Let $G$ be a group such that any maximal subgroup of $G$ is a solvable PNC-group, then one of the statements is true:
\begin{itemize}
\item[(1)] $G$ is 2-nilpotent.
\item[(2)] $G$ is not 2-nilpotent, and $G$ is a minimal non-abelian group of order $q^m 2^n$, and we have
\begin{align*}
G=&\langle a,b_1,b_2, \cdots,b_n|1=a^{q^m}=b_1 ^2=b_2 ^2= \cdots =b_n ^2,\,b_i b_j =b_j  b_i,\,i,j =1,2,\cdots,n,\\
& b_i ^a =b_{i+1},\,i,j =1,2,\cdots,n-1,b_n ^a=b_1 ^{d1} b_2 ^{d_2} \cdots b_n ^{d_n}  \rangle,
\end{align*}
where $f(x) =x^n-d_n x^{n-1}- \cdots -d_2 x-d_1$ is irreducible  on $F_2$, which is a factor of $ x^{q^m}-1$.
\end{itemize}
\end{theorem}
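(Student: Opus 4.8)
The plan is to first strip the hypothesis down to a statement about $2$-nilpotency. Let $M$ be any maximal subgroup of $G$. By assumption $M$ is a solvable PNC-group; if $2\mid |M|$ then $2=\min\{\pi(M)\}$ and Proposition \ref{normal complement} gives that $M$ is $2$-nilpotent, while if $|M|$ is odd then $M$ is $2$-nilpotent for trivial reasons. Since possessing a normal $2$-complement is inherited by subgroups, and every proper subgroup of $G$ lies inside a maximal one, it follows that \emph{every} proper subgroup of $G$ is $2$-nilpotent. If $G$ itself is $2$-nilpotent we are in case (1); otherwise $G$ is a minimal non-$2$-nilpotent group, and the task becomes to identify its isomorphism type.

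Next I would invoke the classical structure theory of minimal non-$p$-nilpotent groups (It\^o): such a $G$ is a Schmidt group, $G=P\rtimes Q$ with $P$ a normal Sylow subgroup, $Q=\langle a\rangle$ a cyclic Sylow subgroup of coprime order, $P=G'$, and $P/\Phi(P)$ an irreducible $Q$-module. Because $P$ is a normal Hall complement to $Q$, the group $G$ is $q$-nilpotent for the prime $q$ dividing $|Q|$; as $G$ is \emph{not} $2$-nilpotent this forces $q$ to be odd and $P$ to be the Sylow $2$-subgroup. Thus $G=P\rtimes\langle a\rangle$ with $|a|=q^m$ and $q$ odd.

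It remains to pin down $P$ and the action. Here I would use Lemma \ref{nilpotent}: $P$ is a nilpotent proper subgroup, hence sits in a maximal subgroup that is a solvable PNC-group, so $P$ is a Dedekind group. The maximal subgroup $M_0=P\langle a^q\rangle$ is itself $2$-nilpotent, which forces $\langle a^q\rangle\unlhd M_0$, i.e. $a^q$ centralises $P$; consequently the action of $\langle a\rangle$ factors through $\langle a\rangle/\langle a^q\rangle\cong C_q$ and is irreducible on $P/\Phi(P)$. One then excludes exponent $4$ in the abelian case (otherwise the proper subgroup $\Phi(P)\rtimes\langle a\rangle$ would fail to be $2$-nilpotent, since $\langle a\rangle$ acts nontrivially on $\Phi(P)$ via the squaring isomorphism $P/\Phi(P)\cong\Phi(P)$), so an abelian $P$ must be elementary abelian. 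Writing $P=B\cong\mathbb{F}_2^{\,n}$ and taking $f\in\mathbb{F}_2[x]$ to be the irreducible degree-$n$ minimal polynomial of $a$ on $B$ (so $f\mid x^{q^m}-1$ and $a$ acts as the companion matrix of $f$), one reads off exactly the presentation of case (2); moreover the maximal subgroups $P\times\langle a^q\rangle$ and the conjugates of $\langle a\rangle$ are then all abelian, so $G$ is minimal non-abelian. This matches Theorem \ref{classification}(5)(i).

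The main obstacle is precisely the determination of $P$ inside the class of Dedekind $2$-groups, namely excluding the \emph{Hamiltonian} possibility $P\cong Q_8\times C_2^{\,k}$. This step is genuinely delicate, because such a $G$ can still be a bona fide Schmidt group in which every maximal subgroup is Dedekind, hence a solvable PNC-group: the smallest instance is $G\cong\mathrm{SL}(2,3)$ with $P\cong Q_8$, whose maximal subgroups $Q_8$ and $C_6$ are both PNC-groups while $G$ is not $2$-nilpotent and not minimal non-abelian. Since $\mathrm{SL}(2,3)$ satisfies the hypotheses yet falls outside case (2), the rigorous version of the argument must either adjoin this quaternion family to the classification or impose an extra condition ruling it out; handling the Hamiltonian configuration is where the sharpest content of the proof lies.
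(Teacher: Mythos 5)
Your positive reduction is correct and genuinely more self-contained than the paper's. The paper obtains the dichotomy ``$G$ is $2$-nilpotent or a minimal non-nilpotent group with kernel an elementary abelian $2$-group or a quaternion group'' wholesale from \cite[Theorem 3.2]{LG}, and then identifies the Schmidt group by matching it against Chen's classification \cite{SC} of groups all of whose maximal subgroups are Dedekind (the Dedekind property of the maximal subgroups coming, exactly as in your argument, from Lemma \ref{nilpotent}). You instead derive the dichotomy from the paper's own Proposition \ref{normal complement} (each maximal subgroup is $2$-nilpotent, and $2$-nilpotency passes to subgroups) together with It\^o's theorem, and you pin down the elementary abelian case by hand through the irreducible action of $\langle a\rangle$ on $P/\Phi(P)$, reconstructing the presentation in case (2) rather than quoting it; the centralizing of $P$ by $a^q$ and the exclusion of exponent $4$ are both handled correctly.

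Your closing objection is the important point, and you are right: the statement as printed is false, and the flaw in the paper's proof sits exactly where you located it. The paper eliminates the quaternion case by asserting that cases (1) and (3) of \cite[Theorem]{SC} are $2$-nilpotent; but Chen's case (3) is the inner $3$-closed family $Q_8\rtimes C_{3^m}$, which is never $2$-nilpotent, since a normal $2$-complement would be a normal Sylow $3$-subgroup, forcing the action on $Q_8$ to be trivial and $G$ nilpotent. Every member of this family satisfies the hypothesis of the theorem --- its maximal subgroups are $Q_8\times C_{3^{m-1}}$ and the conjugates of the cyclic group of order $2\cdot 3^m$, all Dedekind and hence solvable PNC-groups --- while being neither $2$-nilpotent nor minimal non-abelian. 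For $m=1$ this is your $\mathrm{SL}(2,3)$, which the paper's own Theorem \ref{classification}(5)(ii) already lists among groups all of whose proper subgroups are solvable PNC-groups, an internal contradiction. The one configuration you left open, a Hamiltonian kernel $P\cong Q_8\times C_2^{\,k}$ with $k\ge 1$, can in fact be excluded cleanly: there $Z(P)\Phi(P)/\Phi(P)$ is a nontrivial proper $\langle a\rangle$-invariant submodule of $P/\Phi(P)$, contradicting irreducibility. So the corrected conclusion should adjoin the whole family $Q_8\rtimes C_{3^m}$, $m\ge 1$, not only $\mathrm{SL}(2,3)$: since the hypothesis here constrains only the \emph{maximal} subgroups, the argument from Theorem \ref{classification} that forced $m=1$ (via Li's restriction $|Q|=q$ for minimal non-PE-groups) is not available.
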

\begin{proof}
Since any maximal subgroup of $G$ is a solvable PNC-group, it follows from {{\cite[Theorem 3.2]{LG}}} that either $G$ is 2-nilpotent, or $G$ is a minimal non-nilpotent group. The second case suggests that one of the following results holds:
\begin{itemize}
\item[(i)] $G = P \rtimes Q$, $P$ is an elementary abelian 2-group, $Q$ is a cyclic $q$-group, where $q \neq 2$. 
\item[(ii)] $G = P \rtimes Q$, $P$ is quaternion group, $Q$ is a cyclic 3-group.
\end{itemize}
In the two cases, $\pi (G) =2$. By Lemma \ref{nilpotent}, we conclude that every proper subgroup of $G$ is a Dedekind group. It follows that $G$ is isomorphic to (1)-(3) in {{\cite[Theorem]{SC}}}. Since (1),(3) in {{\cite[Theorem]{SC}}} are 2-nilpotent, we indicate that $G$ is isomorphic to (2) in {{\cite[Theorem]{SC}}}. As (2) in {{\cite[Theorem]{SC}}} has cyclic Sylow subgroups, it follows that (ii) is not isomorphic to (2) in {{\cite[Theorem]{SC}}}. Hence (i) is isomorphic to (2) in {{\cite[Theorem]{SC}}}, and we have $p=2$.
\end{proof}
\begin{theorem}
Let $G$ be a non-abelian simple group. Then any second maximal subgroup  of $G$ is a solvable PNC-group if and only if $G$ is isomorphic to one of the following simple groups:
\begin{itemize}
\item[(1)] ${\rm{PSL}} (2,p)$, where $p$ is a prime such that $p >3$, $p^2 -1 \not\equiv 0 \,(\!\!\!\mod 5)$, $p^2 -1 \not\equiv 0 \,(\!\!\!\mod 16)$. 
\item[(2)] ${\rm{PSL}} (2,2^q)$, where $q$ is a prime such that $2^q -1$ is a prime.
\item[(3)] ${\rm{PSL}} (2,3^q)$, where $q$ is an odd prime such that $\frac{3^q -1}{2}$ is a prime.
\end{itemize}
\end{theorem}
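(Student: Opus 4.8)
The plan is to prove the two implications separately, using Proposition~\ref{NE} to replace the hypothesis ``$H$ is a solvable PNC-group'' by the equivalent ``$H$ is a solvable $\mathscr{T}$-group,'' and keeping in mind throughout that such an $H$ is in particular supersolvable (Proposition~\ref{NE}(3)). The whole argument is then driven by Dickson's description of the subgroups of $\mathrm{PSL}(2,q)$.

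For the necessity I would first show that $G$ must be a minimal simple group. Let $M$ be any maximal subgroup of $G$; every maximal subgroup of $M$ is a second maximal subgroup of $G$, hence supersolvable, so every proper subgroup of $M$ is supersolvable. If $M$ were non-solvable, then all of its maximal subgroups being solvable would make $M$ either minimal simple or possess a minimal simple quotient, and in either case a minimal simple group arises all of whose maximal subgroups are supersolvable. This is impossible, since every minimal simple group has a non-supersolvable maximal subgroup (for example $A_4 \le A_5$, or the Borel subgroup $E \rtimes C_{q-1}$ of $\mathrm{PSL}(2,2^p)$, whose radical $E$ is a non-cyclic chief factor). Hence every maximal subgroup of $G$ is solvable and $G$ is minimal simple. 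By Thompson's classification $G$ is one of $\mathrm{PSL}(2,2^p)$, $\mathrm{PSL}(2,3^p)$, $\mathrm{PSL}(2,p)$ with $p \equiv \pm 2 \pmod{5}$, $\mathrm{Sz}(2^p)$ or $\mathrm{PSL}(3,3)$. I would discard the last two at once: $\mathrm{PSL}(3,3)$ has a maximal subgroup $S_4$, whence $A_4$ is a second maximal subgroup which is solvable but not a $\mathscr{T}$-group (the subnormal $C_2 \le V_4$ is not normal), contradicting the hypothesis; and in $\mathrm{Sz}(2^p)$ a non-abelian Suzuki $2$-group arises as a second maximal subgroup, which is nilpotent and non-Dedekind, hence not a solvable PNC-group by Lemma~\ref{nilpotent}.

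It remains, still for necessity, to extract the arithmetic conditions for the groups $\mathrm{PSL}(2,q)$ from Dickson's list of maximal subgroups (a Borel subgroup, two maximal dihedral subgroups associated with $q-1$ and $q+1$, possibly $A_4,S_4,A_5$, and subfield subgroups). The recurring obstruction is $A_4$: since $A_4 \le A_5$ and $A_4 \le S_4$ are maximal, a proper maximal subgroup isomorphic to $A_5$ or to $S_4$ makes $A_4$ a second maximal subgroup, which is forbidden. Ruling out $A_5$ gives $p^2-1 \not\equiv 0 \pmod{5}$ and ruling out $S_4$ gives $p \not\equiv \pm 1 \pmod{8}$, i.e. $p^2-1 \not\equiv 0 \pmod{16}$; this last congruence is forced a second time by the dihedral subgroups, because a maximal dihedral subgroup of order $p\mp1$ contains a dihedral subgroup of order $(p\mp1)/2$ which, by Proposition~\ref{dihedral group}, is a PNC-group precisely when $16 \nmid p\mp1$. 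For the families in characteristic $2$ and $3$ the analogous exclusions hold automatically (for $q$ odd one checks $3^q \equiv 3 \pmod{8}$ and $3^q \not\equiv \pm 1 \pmod{5}$, while $A_5 \le \mathrm{PSL}(2,2^q)$ would need $\mathbb{F}_4 \subseteq \mathbb{F}_{2^q}$), but a new phenomenon controls the Borel subgroup: its unipotent radical $E$ is elementary abelian of rank $q > 1$, and since the relevant torus has odd order it acts on $E$ by a power automorphism only through the identity; hence any Borel second maximal $E \rtimes C_{m/r}$ with $m/r > 1$ fails to be a $\mathscr{T}$-group, forcing the torus order $m$ (namely $2^q-1$, resp.\ $(3^q-1)/2$) to be prime, so that $E$ itself — a Dedekind group — is the only non-complement second maximal of the Borel; primality of $q$ then removes the proper subfield subgroups. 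No primality condition is needed for $\mathrm{PSL}(2,p)$, because there the radical is the cyclic group $C_p$, every automorphism of which is a power automorphism.

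For the sufficiency I would reverse this bookkeeping: for each listed group I would enumerate all second maximal subgroups via Dickson and verify that each is a solvable $\mathscr{T}$-group, hence a solvable PNC-group by Proposition~\ref{NE}. The cyclic and elementary abelian ones are Dedekind; the dihedral ones satisfy the criterion $4 \nmid n$ of Proposition~\ref{dihedral group} under the stated $2$-adic conditions; and the metacyclic second maximals descending from the Borel subgroup are of the form $A \rtimes \langle a \rangle$ covered by Theorem~\ref{sufficiency}, the primality hypotheses guaranteeing that every subgroup of $A$ is normal. The main obstacle is the necessity direction: both the reduction to minimal simple groups, which rests on Thompson's classification, and the case-by-case matching of each candidate non-PNC second maximal subgroup with a precise congruence, together with the borderline coincidences $\mathrm{PSL}(2,4) \cong \mathrm{PSL}(2,5) \cong A_5$ and the case $p=5$, where $A_5$ is the whole group rather than a proper maximal subgroup.
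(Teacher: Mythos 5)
Your proposal is correct in substance but follows a genuinely different route from the paper, above all in the necessity direction. The paper disposes of necessity in a single stroke: by Proposition~\ref{NE}\,(10) every second maximal subgroup, being a solvable PNC-group, is a PE-group, and the classification of non-abelian simple groups whose second maximal subgroups are PE-groups is then quoted directly from {\cite[Theorem 5.3]{YL}}, which yields exactly the three families; no reduction to minimal simple groups, no appeal to Thompson, and no extraction of the congruence and primality conditions ever appears in the paper's text. You instead re-derive that classification from scratch: reduction to minimal simple groups via supersolvability of the second maximal subgroups (note that your key fact, that every minimal simple group has a non-supersolvable maximal subgroup, must be verified across all five Thompson families, which your separate eliminations of ${\rm Sz}(2^p)$ and ${\rm PSL}(3,3)$ partially duplicate), then a Dickson-based matching of each arithmetic condition with a concrete PNC-obstruction: $A_4$ occurring as a second maximal subgroup forbids maximal $A_5$ and $S_4$, giving $5 \nmid p^2-1$ and $16 \nmid p^2-1$; Proposition~\ref{dihedral group} controls the dihedral chains; and the failure of a nontrivial odd-order torus to act by power automorphisms on the elementary abelian unipotent radical forces $2^q-1$, respectively $(3^q-1)/2$, to be prime. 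This costs far more case analysis (including the coincidences around $A_5 \cong {\rm PSL}(2,4) \cong {\rm PSL}(2,5)$ and the case $p=5$, which the citation route absorbs silently) but buys a self-contained argument in which each hypothesis of the theorem is traced to a specific forbidden second maximal subgroup. For sufficiency the two arguments have the same shape---enumerate the second maximal subgroups and verify each one---but the paper takes its maximal subgroup lists from {\cite[Lemma 5.2]{YL}} and dismisses the Frobenius maximal subgroup on the ground that it is minimal non-abelian, whereas you handle the Borel-descended second maximal subgroups $C_p \rtimes C_{(p-1)/(2q)}$ through the power-automorphism criterion of Theorem~\ref{sufficiency}. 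Your choice is in fact the safer one in family (1): for instance in ${\rm PSL}(2,13)$ the Borel subgroup $C_{13} \rtimes C_6$ is not minimal non-abelian (it contains $D_{13}$ and $C_{13} \rtimes C_3$ as proper non-abelian subgroups), so the ``all proper subgroups abelian'' shortcut does not apply there, while Theorem~\ref{sufficiency} settles all of these metacyclic second maximal subgroups uniformly, since every automorphism of the cyclic kernel $C_p$ is a power automorphism.
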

\begin{proof} The proof is proceed via the two parts:
\begin{itemize}
\item[(1)] Sufficiency of the proof.
\end{itemize}
If $G$ is isomorphic to one of the three groups above, it follows from {{\cite[Lemma 5.2]{YL}}} that the maximal subgroup of $G$ is isomorphic to one of the following three types:
\begin{itemize}
\item[(i)] Dihedral group $D_{\frac{r-1}{d}}$ of order $\frac{2(r-1)}{d}$, where $d = (r-1,2)$, $r=p,2^q,3^q$.
\item[(ii)] A Frobenius group $N$ with elementary abelian Frobenius kernel $K$ of order $r$, and cyclic Frobenius complement $H$, where $d = (r-1,2)$, $r=p,2^q,3^q$.
\item[(iii)] $A_4$.
\end{itemize}
Since $p^2 -1 \not\equiv 0 \,(\!\!\!\mod 16)$, we conclude that either $4 \parallel p-1$ or $4 \parallel p+1$. The second case implies that $2 \parallel p-1$. If $r=p$, one can easily find that $\frac{r-1}{d}$ is odd or $2 \parallel \frac{r-1}{d}$. By proposition \ref{dihedral group}, the maximal subgroups of $D_{\frac{r-1}{d}}$ are exactly $D_{\frac{r-1}{dp}}$, $p \in \pi (|\frac{r-1}{d}|)$ and $C_{\frac{r-1}{d}}$. It follows that 
$$v_2 \left(  \frac{r-1}{dp}  \right)  \leq v_2 \left( \frac{r-1}{d} \right) \leq 1,\,\forall \, p \in \pi (|\frac{r-1}{d}|).$$
Again by proposition \ref{dihedral group}, we indicate that any subgroup of $D_{\frac{r-1}{d}}$ is a solvable PNC-group, if $r=p$. For $r = 2^q$, we conclude from $\frac{2^q -1}{d} = 2^q -1$ is odd that any maximal subgroup of $D_{\frac{r-1}{d}} = D_{2^q -1}$ is a solvable PNC-group. For $r= 3^q$, since $\frac{3^q -1}{2}$ is a prime, we have 
$$v_2 \left( \frac{r-1}{d}  \right) = v_2 \left( \frac{3^q-1}{2}  \right) \leq 1.$$
Hence any maximal subgroup of $D_{\frac{r-1}{d}} = D_{\frac{3^q -1}{2}}$ is a solvable PNC-group. For (ii), it follows from {{\cite[Lemma 5.2]{YL}}} that $N$ is a minimal non-abelian group. Hence any subgroup of $N$ is abelian, and any maximal subgroup of $N$ is a solvable PNC-group. One can easily find that any maximal subgroup of $A_4$ is a solvable PNC-group.
\begin{itemize}
\item[(2)] Necessity of the proof.
\end{itemize}
If any second maximal subgroup of $G$ is a solvable PNC-group, by proposition \ref{NE}, we conclude that any second maximal subgroup $H$ of $G$ is a PE-subgroup. It follows from {{\cite[Theorem 5.3]{YL}}} that $G$ is isomorphic to one of the following groups:
\begin{itemize}
\item[(1)] ${\rm{PSL}} (2,p)$, where $p$ is a prime such that $p >3$, $p^2 -1 \not\equiv 0 \,(\!\!\!\mod 5)$, $p^2 -1 \not\equiv 0 \,(\!\!\!\mod 16)$. 
\item[(2)] ${\rm{PSL}} (2,2^q)$, where $q$ is a prime such that $2^q -1$ is a prime.
\item[(3)] ${\rm{PSL}} (2,3^q)$, where $q$ is an odd prime such that $\frac{3^q -1}{2}$ is a prime.\qedhere
\end{itemize}
\end{proof}
\begin{theorem}
Let $G$ be a non-solvable group which is not simple. If any second maximal subgroup of $G$ is a solvable PNC-group, then $G$ is isomorphic to one of the following groups:
\begin{itemize}
\item[(1)] ${\rm{SL}} (2,p)$, where $p$ is a prime such that $p >3$, $p^2 -1 \not\equiv 0 \,(\!\!\!\mod 5)$, $p^2 -1 \not\equiv 0 \,(\!\!\!\mod 16)$. 
\item[(2)] ${\rm{SL}} (2,3^r)$, where $r$ is an odd prime.
\end{itemize}
\end{theorem}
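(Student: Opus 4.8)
I plan to prove this as a necessity statement by reducing $G$ to a minimal non-solvable group and then transferring the hypothesis to the simple quotient. First I would observe that the hypothesis on second maximal subgroups forces \emph{every} proper subgroup of $G$ to be solvable. Indeed, let $M$ be any maximal subgroup of $G$; then every maximal subgroup of $M$ is a second maximal subgroup of $G$, hence a solvable PNC-group by hypothesis, so $M$ is a group all of whose maximal subgroups are solvable PNC-groups. The preceding theorem on such groups then shows $M$ is either $2$-nilpotent or a minimal non-abelian group of the displayed type, and in both cases $M$ is solvable (a $2$-nilpotent group has a normal complement of odd, hence solvable, order). Since every proper subgroup lies in a maximal one, $G$ is a non-solvable, non-simple group all of whose proper subgroups are solvable, i.e. a minimal non-solvable group.

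Next I would pin down the normal structure. Put $R=\mathrm{Sol}(G)$; non-solvability gives $R<G$, and non-simplicity together with minimal non-solvability (every proper normal subgroup is solvable, every proper quotient non-solvable) gives $R\neq1$, with $S:=G/R$ non-abelian simple. Then $G$ is perfect: since $G'R/R=(G/R)'=S$ we have $G'R=G$, so $S$ is a quotient of the normal subgroup $G'$; as $G'$ cannot be a proper (hence solvable) subgroup with a non-solvable quotient, $G'=G$. The crucial step is that $R$ is central. If it were not, a minimal normal subgroup $V\le R$ on which the simple section $S$ acts nontrivially would be an elementary abelian $p$-group carried faithfully and fixed-point-freely by a suitable cyclic subgroup acting irreducibly of rank $>1$; descending a maximal chain one produces a second maximal subgroup of Frobenius shape $V\rtimes C$ (exactly as in the group $2^{4}{:}A_{5}$, whose second maximal $2^{4}{:}C_{5}$ has irreducible kernel). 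Such a group is not supersolvable, contradicting that by proposition \ref{NE} every second maximal subgroup, being a solvable PNC-group, is supersolvable. Hence $R\le Z(G)$, and with $G$ perfect we get $R=Z(G)\le\Phi(G)$, so $G$ is quasisimple.

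Because $R\le\Phi(G)$, the maximal subgroups of $G$ are precisely the preimages of those of $S$; any second maximal subgroup $\overline K<\overline M<S$ then lifts to $K<M<G$ with $R\le K$, where $M$ is maximal in $G$ and $K$ is maximal in $M$. Thus $K$ is a second maximal subgroup of $G$, hence a solvable PNC-group, so by proposition \ref{factor group} its image $\overline K=K/R$ is a solvable PNC-group. Therefore $S$ is a non-abelian simple group every second maximal subgroup of which is a solvable PNC-group, and the preceding classification of such simple groups gives $S\cong\mathrm{PSL}(2,p)$, $\mathrm{PSL}(2,2^{q})$ or $\mathrm{PSL}(2,3^{q})$ under the stated congruence and primality conditions. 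I would then discard the characteristic-$2$ branch: $\mathrm{PSL}(2,2^{q})$ has trivial centre, and for $2^{q}\ge 8$ its Schur multiplier is trivial, so a perfect central extension is simple, contradicting $R\neq1$; the remaining case $\mathrm{PSL}(2,4)\cong\mathrm{PSL}(2,5)$ is absorbed into the first family at $p=5$. For the surviving families the Schur multiplier is exactly $C_{2}$, so the perfect central extension $G$ with $R=Z(G)\neq1$ must be the full double cover, giving $G\cong\mathrm{SL}(2,p)$ or $G\cong\mathrm{SL}(2,3^{r})$ (with $r=q$). The congruences on $p$ in $(1)$ are inherited verbatim from the simple classification, whereas for the $3^{r}$ family they hold automatically for an odd prime $r$: one has $3^{r}\not\equiv\pm1\pmod5$ and $v_{2}(3^{2r}-1)=3$, so $3^{2r}-1\not\equiv0\pmod5$ and $3^{2r}-1\not\equiv0\pmod{16}$, which explains why case $(2)$ needs only that $r$ be an odd prime.

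The main obstacle I expect is the centrality of $R$, that is, proving this minimal non-solvable non-simple group is quasisimple. Minimal non-solvability alone does \emph{not} force $R$ to be central (non-split extensions such as $2^{4}{:}A_{5}$ exist), so the argument must genuinely use the hypothesis through proposition \ref{NE} to rule out non-central radicals by exhibiting a non-supersolvable Frobenius-type second maximal subgroup; making the choice of the chief factor $V$ and the acting cyclic group precise is the delicate point. A secondary technicality is the Schur-multiplier bookkeeping needed to identify $G$ as precisely the double cover $\mathrm{SL}(2,\cdot)$ rather than another central extension, together with verifying that the passage $R\le\Phi(G)$ really makes the second-maximal correspondence between $G$ and $S$ exact.
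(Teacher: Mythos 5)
Your outer reductions are correct and genuinely different in spirit from the paper's proof: forcing every maximal subgroup $M$ of $G$ to be solvable via the preceding theorem (its maximal subgroups are second maximal in $G$, so $M$ is $2$-nilpotent or minimal non-abelian, hence solvable), deducing that $R=\mathrm{Sol}(G)$ satisfies $1\neq R<G$ with $S=G/R$ non-abelian simple and $G$ perfect, and — granted $R\le Z(G)$ — the lifting of second maximal subgroups through $R\le\Phi(G)$, the appeal to the paper's simple-group theorem, and the Schur-multiplier endgame ($M(\mathrm{PSL}(2,p))=M(\mathrm{PSL}(2,3^{q}))=C_2$, trivial multiplier for $\mathrm{PSL}(2,2^{q})$ with $q\ge 3$, and $\mathrm{PSL}(2,4)\cong\mathrm{PSL}(2,5)$ absorbed at $p=5$) are all sound; your arithmetic for the $3^{r}$ family is also right. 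But there is a genuine gap exactly where you flag one: the centrality of $R$ is asserted, not proved, and the sketch offered would not survive scrutiny. Note first that $R\le\Phi(G)$ follows already from your first reduction (if $R\not\le M$ for some maximal $M$, then $G=RM$ and the solvable $M$ surjects onto $S$), so the enemy is a \emph{non-split Frattini} extension $V\cdot S$ with $V$ a faithful module; such extensions of minimal simple groups with non-central elementary abelian kernel do exist ($p$-Frattini covers). Your model example $2^{4}{:}A_{5}$ is misleading here: being split it contains $A_{5}$, so it is not minimal non-solvable and never arises after your reduction. In the genuine case $V\le\Phi(G)$ lies in every maximal subgroup of $G$ but not necessarily in every maximal subgroup of those, and you neither exhibit a cyclic $C$ acting irreducibly on $V$ with $|V|$ non-prime inside an appropriate maximal chain, nor verify that a subgroup of shape $V\rtimes C$ is actually \emph{second} maximal in $G$. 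Carrying this out in general requires detailed knowledge of the maximal subgroups of the minimal simple groups and of the modules $V$ — in effect redoing a substantial part of the work you are trying to avoid.

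For comparison, the paper does none of this: it observes via proposition \ref{NE}\,(10) that every second maximal subgroup, being a solvable PNC-group, has all its subgroups NE and hence is a PE-group, and then quotes Lu--Guo's classification (\cite[Theorem 3.3]{LG}) of non-solvable, non-simple groups whose second maximal subgroups are PE-groups, which yields precisely ${\rm SL}(2,p)$ with $p\equiv 3,5\,(\!\!\mod 8)$ (equivalent to $16\nmid p^{2}-1$) and ${\rm SL}(2,3^{r})$. So the citation buys exactly the step your plan leaves open. The cleanest repair of your route is to invoke that theorem (or its quasisimplicity analysis) at the centrality step; if instead your plan were completed as written, it would in fact prove slightly more than the stated theorem, since passing through the paper's simple-group classification would force $(3^{r}-1)/2$ to be prime in case (2), a condition absent from both the statement and Lu--Guo's conclusion.
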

\begin{proof}
It follows from proposition \ref{NE} (10) that any second maximal subgroup of $G$ is a PE-group. Since project special linear groups are simple, it follows from {{\cite[Theorem 3.3]{LG}}} that $G$ is isomorphic to the following two kinds of special linear groups:
\begin{itemize}
\item[(1)] ${\rm{SL}} (2,p)$, where $p$ is a prime such that $p >3$, $p^2 -1 \not\equiv 0 \,(\!\!\!\mod 5)$, $p \equiv 3,5\,(\!\!\!\mod 8)$. 
\item[(2)] ${\rm{SL}} (2,3^r)$, where $r$ is an odd prime.\qedhere
\end{itemize}
\end{proof}

\!\!\!\!\!\!\!\!\!\textbf{Declarations of interest:} none.

\end{document}